\newcommand{\eps}{\ensuremath{\varepsilon}}
\renewcommand{\hat}{\widehat}
\renewcommand{\tilde}{\widetilde}
\renewcommand{\bar}{\overline}
\newcommand{\bD}{\ensuremath{\mathbb{D}}}
\newcommand{\bE}{\ensuremath{\mathbb{E}}}
\newcommand{\bN}{\ensuremath{\mathbb{N}}}
\newcommand{\bR}{\ensuremath{\mathbb{R}}}
\newcommand{\cB}{\ensuremath{\mathcal{B}}}
\newcommand{\cL}{\ensuremath{\mathcal{L}}}
\newcommand{\cM}{\ensuremath{\mathcal{M}}}
\theoremstyle{plain}
\newtheorem{Thm}{Theorem}[section]
\newtheorem{Prop}[Thm]{Proposition}
\theoremstyle{definition}
\newtheorem{Def}[Thm]{Definition}
\newtheorem{Rem}[Thm]{Remark}
\numberwithin{equation}{section}
\renewcommand\section{\@startsection {section}{1}{\z@}%
                                   {-3.5ex \@plus -1ex \@minus -.2ex}%
                                   {2.3ex \@plus.2ex}%
                                   {\normalfont\large\bf}}
\renewcommand\subsection{\@startsection {subsection}{1}{\z@}%
                                   {-3.5ex \@plus -1ex \@minus -.2ex}%
                                   {2.3ex \@plus.2ex}%
                                   {\normalfont\normalsize\bf}}
\begin{document}

\begin{center}
{\Large \bf 
Fluctuation scaling limits for positive recurrent jumping-in diffusions with large jumps 
}
\end{center}
\begin{center}
Kosuke Yamato (Kyoto University)
\end{center}

\begin{center}
	{\small \today}
\end{center}

\begin{abstract}
	For positive recurrent jumping-in diffusions with large jumps, we study scaling limits of the fluctuations of inverse local times and occupation times.
	We generalize the eigenfunctions with modified Neumann boundary condition, which have been introduced in the previous study for small jumps, to a class of diffusions of stronger singularity.
\end{abstract}


\section{Introduction}\label{section: intro}

Let us consider a strong Markov process $X$ on the half line $[0,\infty)$ (resp.\ the real line $\bR$) which has continuous paths up to the first hitting time of $0$ and, as soon as $X$ hits $0$, $X$ jumps into the interior $(0,\infty)$ (resp.\ $\bR \setminus \{0\}$) and starts afresh. We call such a process $X$ a {\it unilateral} (resp.\ {\it bilateral}) {\it jumping-in diffusion}.

As we will see in Section \ref{section: contthmofinverselocaltime}, unilateral jumping-in diffusions are specified by two Radon measures on $(0,\infty)$, the speed measure $m$ which characterize the diffusive behavior on $(0,\infty)$ and the jumping-in measure $j$ which do the jumps from the origin.
Let $X$ be a positive recurrent unilateral jumping-in diffusion with large jumps, that is, $\int_{0}^{\infty}xj(dx) = \infty$.
Positive recurrence ensures existence of the following limit of the inverse local time $\eta$ at $0$:
\begin{align}
	\frac{1}{t}\eta (t) &\xrightarrow[t \to \infty]{P} b  \quad (b \in [0,\infty)).  \label{eq95}
\end{align}
Our main aim is to study the {\it fluctuation scaling limit} of $\eta$:
\begin{align}
	f(\gamma)\left(\frac{\eta (\gamma t)}{\gamma} - b t\right) &\xrightarrow[\gamma \to \infty]{d} B(\kappa t) \quad \mathrm{on} \  \bD  \label{eq92}
\end{align}
for some scaling function $f$, a constant $\kappa > 0$ and a standard Brownian motion $B$, where $\bD$ denotes the space of c\`adl\`ag paths from $[0,\infty)$ to $\bR$ equipped with Skorokhod's $J_1$-topology.
When $\eta$ is of finite variance, this scaling limit can be easily obtained by the classical central limit theorem with $f(\gamma) = \gamma^{1/2}$. Hence our main concern is the case of infinite variance.  
	

In the previous paper \cite{YamatoYano}, the author has studied positive recurrent jumping-in diffusions with small jumps: $\int_{0}^{\infty}xj(dx) < \infty$.
One of the main results in \cite{YamatoYano} was to show the fluctuation scaling limit \eqref{eq92} when the limit process is an $\alpha$-stable process $S_\alpha$ ($1 < \alpha < 2$) without negative jumps, that is, 
\begin{align}
f(\gamma)\left(\frac{\eta (\gamma t)}{\gamma} - b t\right) &\xrightarrow[\gamma \to \infty]{d} S_\alpha(t) \quad \mathrm{on} \  \bD.  \label{eq96}
\end{align}
The main tool was the Krein-Kotani correspondence, which gives a one-to-one and bi-continuous correspondence between a class of speed measures and a class of Herglotz functions. To apply the correspondence, it was necessary to assume
\begin{align}
\int_{0+}m(x,1)^2dx < \infty \label{eq48}
\end{align}
for the speed measure $m$ of jumping-in diffusions. Under \eqref{eq48}, we may define the eigenfunction for $\frac{d}{dm}\frac{d}{dx}$ with {\it modified Neumann boundary condition} at $0$ and the Laplace exponent of $\eta$ may be represented by the eigenfunctions and the Herglotz function corresponding to $m$. The fluctuation scaling limit \eqref{eq96} was proven by showing a continuity of them w.r.t.\ $m$ and $j$.
In the present paper, we allow exit boundaries with stronger singularity, including those that do not satisfy \eqref{eq48}, and construct eigenfunctions subject to more singular boundary conditions.

As an application of \eqref{eq92}, we consider the fluctuation scaling limit of the occupation time of a bilateral jumping-in diffusion $X$: $A(t) = \int_{0}^{t}1_{(0,\infty)}(X_s)ds$. In the positive recurrent case where $A(t)$ has a degenerate mean:
\begin{align}
\frac{1}{t}A(t) \xrightarrow[t \to \infty]{P} p \in (0,1), \label{eq155} 
\end{align}
we establish the fluctuation scaling limits of the form:
\begin{align}
f(\gamma)
\left(\frac{A(\gamma t)}{\gamma} - p t\right) \xrightarrow[\gamma \to \infty]{f.d.} B( \kappa t) \label{eq154}
\end{align}
for some constant $\kappa$ and a function $f(\gamma)$ in \eqref{eq92}. Here $\xrightarrow{f.d.}$ denotes the convergence of finite-dimensional distributions.

\subsection{Main results} \label{section:mainresults}

According to some results of Feller \cite{Feller:Theparabolic} and It\^o \cite{Ito:PPP}, under the natural scale, a unilateral jumping-in diffusion can be characterized by its speed measure $m$ and its jumping-in measure $j$, both of which are Radon measures on $(0,\infty)$. The jumping-in diffusion is a strong Markov process on $[0,\infty)$ which behaves as a $\frac{d}{dm}\frac{d^+}{dx}$-diffusion during staying in $(0,\infty)$ and jumps from the origin to $(0,\infty)$ according to $j$, where $\frac{d^+}{dx}$ denotes the right-differentiation operator. In Section \ref{section: contthmofinverselocaltime}, we will give the precise description. We denote the jumping-in diffusion by $X_{m,j}$ and its inverse local time at $0$ by $\eta_{m,j}$. 

We introduce conditions on $m$ and $j$ for our main results.
Let $K$ and $L$ be functions from $(0,\infty)$ to $\bR$ which vary slowly at $\infty$. Consider the following conditions for $\alpha > 1$ and $\beta > 0$:
\begin{align}
\mathrm{(M)}_{\alpha,K}&: \ m(x,\infty) \sim (\alpha - 1)^{-1}x^{1/\alpha - 1}K(x) \quad (x \to \infty), \label{} \\ 
\mathrm{(J)}_{\beta,L}&: \ j(x,\infty) \sim x^{-\beta}L(x) \quad (x \to \infty), \label{}
\end{align}
where $f(x) \sim g(x) \ (x \to \infty)$ means $\lim_{x \to \infty}f(x) / g(x) = 1$.
Note that the case $\beta \leq 1$ corresponds to the large jump case $\int_{0}^{\infty}xj(dx) = \infty$ and the case $\beta > 1$ does to the small jump case $\int_{0}^{\infty}xj(dx) < \infty$.  

In this paper, we show the fluctuation scaling limit \eqref{eq92} under $\mathrm{(M)}_{\alpha,K}$ and $\mathrm{(J)}_{\beta,L}$ with $\alpha \geq 2$ and $\alpha \beta = 2$.
These conditions obviously implies $\beta \leq 1$ and thus the process $X_{m,j}$ has large jumps: $\int_{0}^{\infty}xj(dx) = \infty$.
The reason why we only consider such $\alpha$ and $\beta$ is as follows: the case $1 < \alpha < 2$ and $\beta > 1$ (small jump case) has already been treated in \cite{YamatoYano} (we will review in more detail in Section \ref{section:PrevStudies}) and the case $\alpha > 2$ and $\alpha \beta > 2$ is easy because the variance of $\eta_{m,j}$ is finite, and we can appeal to the classical central limit theorem. 

Now we explain our main results.
Let $d(m)$ denote the constant of singularity of $m$ near $0$, which we will introduce in \eqref{eq124} and let $n_{m,j}$ denote an excursion measure of $X_{m,j}$ away from $0$, whose precise form is in \eqref{excursion_measure}.
A necessary and sufficient condition for the existence of $X_{m,j}$ will be given in Section \ref{section: contthmofinverselocaltime}.
The following Theorems \ref{informal-main-alpha2+} and \ref{informal-main-alpha-int} are one of our main results, which gives the fluctuation scaling limit of the inverse local time $\eta_{m,j}$ under $\mathrm{(M)}_{\alpha,K}$ and $\mathrm{(J)}_{\beta,L}$ with $\alpha \geq 2$ and $\alpha \beta = 2$ in the case of $\alpha$ is integer or not, respectively.
The proof of Theorem \ref{informal-main-alpha2+} will be given in Section \ref{section: scalinglimit} and that of Theorem \ref{informal-main-alpha-int} will be given in Appendix \ref{appendix}.
Examples for these theorems will be given in Section \ref{section:examle}.

\begin{Thm} \label{informal-main-alpha2+}
	Let $\alpha > 2$ be not an integer and assume $X_{m,j}$ exists. Suppose the following holds
	\begin{enumerate}
		\item $d(m) < \infty$.
		\item $\mathrm{(M)}_{\alpha,K}$ holds.
		\item $\mathrm{(J)}_{2/\alpha,L}$ holds.
	\end{enumerate}
	Define
	\begin{align}
	N(\gamma) := \int_{0}^{\gamma}j(dx)\int_{0}^{x}dy\int_{y}^{\gamma}dm(z)\int_{0}^{z}m(w,\infty)dw. \label{eq197}
	\end{align}
	Let $u$ and $v$ be slowly varying functions at $\infty$ such that 
	\begin{align}
		u(\gamma)^2v(\gamma) \sim N(\gamma) \quad (\gamma \to \infty) \quad \text{and} \quad	\lim_{\gamma \to \infty}\frac{K(\gamma)}{u(\gamma)} = \lim_{\gamma \to \infty}\frac{L(\gamma)}{v(\gamma)} = 0. \label{eq174}
	\end{align}
	Then we have
	\begin{align}
		\frac{1}{\gamma^{1/2}u(\gamma^{\alpha/2})}\left(\eta_{m,j}\left(\frac{\gamma t}{v(\gamma^{\alpha/2})}\right) - b\frac{\gamma t}{v(\gamma^{\alpha/2})}\right) \xrightarrow[\gamma \to \infty]{d}B(2 t) \quad \mathrm{on} \ \bD, \label{eq218}
	\end{align}
	and
	\begin{align}
		n_{m,j}[T_0 > s ] = o(s^{-2}U^{\sharp}(s^2)^{-1}v(s^{\alpha}U^{\sharp}(s^2)^{\alpha/2})) \quad (s \to \infty), \label{eq217}
	\end{align}
	where $b := \int_{0}^{\infty}j(dx)\int_{0}^{x}m(y,\infty)dy$, $U^{\sharp}$ is a de Bruijn conjugate of $U(s) = u(s^{\alpha/2})$ (see e.g., \cite[p.29]{Regularvariation} for the definition of de Bruijn conjugate) and $T_0$ is the lifetime of an excursion path (see Section \ref{section: contthmofinverselocaltime}).
\end{Thm}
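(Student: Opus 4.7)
The plan is to control the Laplace exponent $\Phi(\lambda) := -\log\bE[e^{-\lambda\eta_{m,j}(1)}]$ as $\lambda\to 0^+$ via the generalized eigenfunction $\psi_\lambda$ of $\frac{d}{dm}\frac{d^+}{dx}$ with the modified Neumann boundary condition at $0$ constructed earlier in the paper (available thanks to $d(m)<\infty$), and then to transfer this control into both the tail bound \eqref{eq217} on $n_{m,j}[T_0>\cdot]$ and the functional CLT \eqref{eq218}. By It\^o's excursion theory and the jumping-in structure of $X_{m,j}$,
\[
\Phi(\lambda) = \int n_{m,j}\bigl[1-e^{-\lambda T_0}\bigr] = \int_0^\infty j(dx)\,\bigl(1-1/\psi_\lambda(x)\bigr),
\]
and the iterated-integral expansion of $\psi_\lambda$ yields a formal Taylor expansion of $\Phi$ around $\lambda=0$ whose linear coefficient is $b$ and whose quadratic coefficient, under $\mathrm{(M)}_{\alpha,K}$ and $\mathrm{(J)}_{2/\alpha,L}$, is divergent; an honest truncation at the scale $\gamma=\gamma(\lambda)$ reveals that the relevant divergence is precisely captured by $N(\gamma)$ of \eqref{eq197}, and a Karamata computation using the asymptotics of $m(x,\infty)$ and $j(x,\infty)$ shows that $N(\gamma)$ is slowly varying with growth controlled by $L\cdot K^2$.

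From the resulting small-$\lambda$ asymptotics of $b\lambda-\Phi(\lambda)$ one derives the tail bound \eqref{eq217} by a Karamata--Tauberian inversion for Bernstein functions; the de Bruijn conjugate $U^{\sharp}(s^2)$ enters precisely as the functional inverse identifying the dual scale $s\sim 1/\lambda$ with the auxiliary scale $u(\gamma^{\alpha/2})$ of \eqref{eq174}. A more direct alternative avoiding the Tauberian step is to write $n_{m,j}[T_0>s] = \int_0^\infty j(dx)\,\bP_x[T_0>s]$ and apply classical hitting-time tail estimates for $\frac{d}{dm}\frac{d^+}{dx}$-diffusions after splitting $x$ at a judicious cutoff; the balance $\alpha\beta=2$ is what forces both contributions to live at the margin of integrability, ensuring a slowly varying rather than a regularly varying tail.

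Given \eqref{eq217}, the truncated variance $\sigma^2(s):=\int_0^s r^2\,n_{m,j}[T_0\in dr]$ is slowly varying at infinity of a prescribed form, placing $\eta_{m,j}$ in the boundary Gaussian rather than $\alpha$-stable domain of attraction. With $\tau(\gamma)=\gamma/v(\gamma^{\alpha/2})$ and $a(\gamma)=\gamma^{1/2}u(\gamma^{\alpha/2})$ chosen so that $\tau(\gamma)\sigma^2(a(\gamma))/a(\gamma)^2\to 2$ --- an inversion precisely carried out by the de Bruijn conjugate $U^{\sharp}$ --- the classical functional CLT for subordinators with slowly varying truncated variance yields the convergence in $\bD$ with the $J_1$-topology to $B(2t)$; the rescaled jumps vanish asymptotically by \eqref{eq217}, so the limit is Gaussian and not a spectrally positive stable law. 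The \textbf{main obstacle} will be the control of the second-order behavior of $\Phi$ in the first step: the quadratic coefficient involves a delicate cancellation at the critical balance $\alpha\beta=2$, and extracting the slowly varying remainder requires sharp control on the iterated integrals defining $\psi_\lambda$ near $0$ --- precisely the regime where the enlarged eigenfunction class (admissible when \eqref{eq48} fails) is indispensable.
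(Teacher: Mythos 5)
Your high-level instincts are right---Laplace exponent of $\eta_{m,j}$, eigenfunction expansion near $\lambda=0$, Karamata asymptotics reducing to the function $N(\gamma)$---but the proposal has both a formula error and a structural logical gap, and it glosses over the part of the argument where nearly all the work happens.

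First, the Laplace exponent is $\chi_{m,j}(\lambda)=\int_0^\infty\bigl(1-g_m(\lambda;x)\bigr)j(dx)$, where $g_m(\lambda;\cdot)$ is the unique bounded non-increasing $\lambda$-eigenfunction with $g_m(\lambda;0+)=1$; it is \emph{not} $1/\psi_\lambda$, and indeed $\psi_m(\lambda;0)=0$ so $1/\psi_\lambda$ does not even make sense at the boundary. The correct decomposition used in the paper is $g_m=\varphi^d_m-c^d_m\psi_m$, which is exactly why the modified-Neumann eigenfunctions $\varphi^d_m$ were introduced.

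Second, the proposed order of implications is reversed relative to what is logically available. The paper proves the functional convergence \eqref{eq218} first---by rescaling to $m_\gamma,j_\gamma$ and invoking the continuity theorem (Theorem~\ref{convtoBM2}), i.e., $m_\gamma\xrightarrow{G}0$ plus the weak convergence of $G^2_{m_\gamma}\,dj_\gamma$ to a point mass---and only then derives \eqref{eq217} as a Tauberian corollary of the Laplace-exponent asymptotics that \eqref{eq218} furnishes. Your plan runs the other way: prove \eqref{eq217} first and feed it into a CLT for subordinators with slowly varying truncated variance. But \eqref{eq217} is only an $o(\cdot)$ upper bound on $n_{m,j}[T_0>s]$; it does not pin down the asymptotics of the truncated second moment $\sigma^2(s)=\int_0^s r^2\,n_{m,j}[T_0\in dr]$ with the precision (and in particular the constant) required to normalize correctly and get the factor $2$ in $B(2t)$. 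To get that you would need a two-sided asymptotic on the tail or, equivalently, the small-$\lambda$ asymptotics of $b\lambda-\chi_{m,j}(\lambda)$---which is precisely what is established in proving \eqref{eq218}.

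Third, the sentence ``an honest truncation at the scale $\gamma=\gamma(\lambda)$ reveals that the relevant divergence is precisely captured by $N(\gamma)$'' describes the conclusion, not the argument. The actual content is the verification that $m_\gamma\xrightarrow{G}0$, and in particular that $\int_0^1 G^d_{m_\gamma}\,dm_\gamma\to 0$ for a suitable $d$. That step requires the inductive estimates \eqref{eq210}--\eqref{eq213} on the iterated kernels $H^k(\gamma;\cdot)$, Karamata's theorem for the asymptotics of $M(\gamma)$ and $F(\gamma)$, and the choice of $u,v$ in \eqref{eq174}; the non-integer hypothesis on $\alpha$ is used there to avoid a borderline regular variation index (the integer case requires the extra hypotheses of Theorem~\ref{informal-main-alpha-int}). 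Similarly the verification of condition~(iv) of Theorem~\ref{convtoBM2}, that $G^2_{m_\gamma}\,j_\gamma(dx)\xrightarrow{w}\delta_0$ with total mass $1$, is where $N(\gamma)$ and the normalization $u(\gamma)^2v(\gamma)\sim N(\gamma)$ actually enter. None of this is present in the proposal, and it cannot be replaced by the formal expansion of $\chi_{m,j}$ alone, since the quadratic term of that expansion diverges and the whole point is to control the rate of divergence uniformly after rescaling.

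So: the ingredients you cite are the right ones, but the proof proposal as written does not close the loop. Fix the Laplace-exponent formula, abandon the route via \eqref{eq217} (which gives too weak a bound to go back to \eqref{eq218}), and carry out the rescaling argument---specifically, the $m_\gamma\xrightarrow{G}0$ verification via the $H^k$ iterates, and the identification of the limit mass via $N(\gamma)$.
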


\begin{Rem}\label{ex. of u v}
	When $\alpha$ is not an integer, we always have the functions $u$ and $v$ satisfying \eqref{eq174}.
	As we will show in Proposition \ref{comparison of N,K,L alpha > 2}, it holds
	\begin{align}
		\lim_{\gamma \to \infty}\frac{K(\gamma)^2L(\gamma)}{N(\gamma)} = 0. \label{}
	\end{align}
	Thus, for example, the following $u$ and $v$ satisfy the desired condition:
	\begin{align}
		u(\gamma) = S(\gamma)^{p/2}K(\gamma), \quad v(\gamma) = S(\gamma)^{1-p}L(\gamma) \label{}
	\end{align}
	for $S(\gamma) := N(\gamma) / K(\gamma)^2L(\gamma)$ and $p \in (0,1)$.
\end{Rem}

\begin{Rem}
	The limit result \eqref{eq218} is equivalent to the following form which is consistent with \eqref{eq92}:
	\begin{align}
		f(\gamma)\left(\frac{\eta_{m,j}(\gamma t)}{\gamma} - b\right) \xrightarrow[\gamma \to \infty]{d}B(2\kappa t) \quad  \mathrm{on} \ \bD \label{}
	\end{align}
	with
	\begin{align}
		f(\gamma) = \frac{1}{u(\gamma^{\alpha/2}w^{\sharp}(\gamma)^{\alpha/2})}\sqrt{\frac{\gamma}{w^{\sharp}(\gamma)}}, \label{}
	\end{align}
	where $w^{\sharp}$ is a de Bruijn conjugate of $w(\gamma) = v(\gamma)^{-1}$.
\end{Rem}

In the case where $\alpha$ is integer, we need some additional assumptions.
\begin{Thm} \label{informal-main-alpha-int}
	Let $\alpha \geq 2$ be an integer and assume $X_{m,j}$ exists. 
	Suppose the conditions (i) - (iii) in Theorem \ref{informal-main-alpha2+} holds.
	Let $u$ and $v$ be slowly varying functions at $\infty$ satisfying \eqref{eq174}
	for $N$ in \eqref{eq197}.
	Assume the following:
	\begin{enumerate}
		\setcounter{enumi}{3}
		\item $\lim_{\gamma \to \infty}\frac{K(\gamma)^{d - \alpha  + 1}
		\int_{1}^{\gamma}\frac{K(x)^{\alpha}}{x}dx}{u(\gamma)^{d+1}}
		= 0 \quad $ for some $d > 0$.
	\end{enumerate}
	In the case of $\alpha = 2$, we also assume
	\begin{enumerate}
		\setcounter{enumi}{4}
		\item $\limsup_{\gamma \to \infty} \frac{1}{v(\gamma)}\int_{1}^{\gamma}\frac{L(x)}{x}dx < \infty $.
	\end{enumerate}
	Then we have
	\begin{align}
		\frac{1}{\gamma^{1/2}u(\gamma^{\alpha/2})}\left(\eta_{m,j}\left(\frac{\gamma t}{v(\gamma^{\alpha/2})}\right) - b\frac{\gamma t}{v(\gamma^{\alpha/2})}\right) \xrightarrow[\gamma \to \infty]{d}B(2 t) \quad \mathrm{on} \ \bD, \label{eq218d}
	\end{align}
	and
	\begin{align}
		n_{m,j}[T_0 > s ] = o(s^{-2}U^{\sharp}(s^2)^{-1}v(s^{\alpha}U^{\sharp}(s^2)^{\alpha/2})) \quad (s \to \infty), \label{eq217d}
	\end{align}
	where $b := \int_{0}^{\infty}j(dx)\int_{0}^{x}m(y,\infty)dy$ and $U^{\sharp}$ is a de Bruijn conjugate of $U(s) = u(s^{\alpha/2})$.
\end{Thm}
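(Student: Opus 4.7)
The plan is to mimic the proof of Theorem \ref{informal-main-alpha2+} carried out in Section \ref{section: scalinglimit}, tracking the additional logarithmic resonance terms that arise when $\alpha$ is an integer. The central object to control is the Laplace exponent $\psi_{m,j}$ of the subordinator $\eta_{m,j}$, written via the excursion measure as
\begin{align}
\psi_{m,j}(\lambda) = b\lambda + \int (1 - e^{-\lambda T_0})\, dn_{m,j}. \nonumber
\end{align}
By the general Krein--Kotani machinery recalled earlier, $\psi_{m,j}$ admits a representation through the eigenfunctions of $\tfrac{d}{dm}\tfrac{d^+}{dx}$ with the modified boundary condition at $0$ (whose extension to the singular setting $d(m) < \infty$ is the main technical contribution of the paper). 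Convergence of \eqref{eq218d} to Brownian motion $B(2t)$ will follow from the convergence of the rescaled Laplace exponent to the quadratic exponent $2\lambda^2$ on compacts of $\lambda \geq 0$, together with independence and stationarity of the increments of $\eta_{m,j}$, and then the Skorokhod $J_1$ upgrade is standard for subordinator convergence.

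First, I would expand $\psi_{m,j}(\lambda) - b\lambda$ as $\lambda \to 0$ by iterating the Volterra-type representation of the eigenfunctions in terms of $m$ and $j$. For non-integer $\alpha$, this iteration produces, at the relevant order, the quantity $N(\gamma)$ defined in \eqref{eq197}, which is slowly varying together with $K^2 L$; after the Tauberian/de Bruijn inversion involving $U^{\sharp}$, this gives the scaling $\gamma^{1/2} u(\gamma^{\alpha/2})$ and $v(\gamma^{\alpha/2})$ appearing in \eqref{eq218}. For integer $\alpha \geq 2$, the iteration reaches a resonance: one of the integrands that would produce a convergent power expansion instead produces a $\log$-type primitive, so the naive asymptotic of $\psi_{m,j}(\lambda)$ acquires terms involving $\int_1^{1/\lambda}\! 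K(x)^{\alpha}/x\, dx$ and, for $\alpha=2$, also $\int_1^{1/\lambda}\! L(x)/x\, dx$.

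Second, I would show that assumption (iv) is exactly what forces these logarithmic remainders to be negligible at the scale $u(\gamma)^{d+1}$, so that they do not contribute to the dominant asymptotic and the expression $N(\gamma)$ still governs the limit. More precisely, by Karamata's theorem the integral $\int_1^{\gamma} K(x)^{\alpha}/x\, dx$ is of slowly varying order, and (iv) bounds it by $u(\gamma)^{d+1}/K(\gamma)^{d-\alpha+1}$; inserting this into the estimate of the residual term in the eigenfunction expansion makes it $o(u(\gamma)^{2})$ when multiplied by the appropriate factors. In the case $\alpha = 2$, the tail contribution from $j$ itself generates an extra $\log$ factor because $\mathrm{(J)}_{1,L}$ is the boundary case of large jumps; assumption (v) bounds this contribution by a multiple of $v(\gamma)$ and so keeps it compatible with the scaling produced by $N(\gamma)$.

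Third, once the rescaled Laplace exponent is shown to converge to $2\lambda^2$, the tail estimate \eqref{eq217d} on $n_{m,j}[T_0 > s]$ follows from a standard Tauberian argument applied to the Lévy measure of $\eta_{m,j}$ (i.e., to $n_{m,j}[T_0 \in \cdot]$): the $o$-Tauberian theorem of de Haan--Stadtm\"uller type converts the negligibility of the second derivative of $\psi_{m,j}(\lambda) - b\lambda$ compared with the quadratic scale into the stated $o$-bound on the tail, once the inverse relation between the two slowly varying functions is identified through $U^{\sharp}$. The main obstacle is the careful bookkeeping of the logarithmic corrections in the eigenfunction iteration when $\alpha$ is an integer; condition (iv) (and (v) when $\alpha=2$) is dictated precisely by this bookkeeping, and verifying that these conditions indeed suffice to push all resonance terms into the remainder is where the bulk of the Appendix calculation lies.
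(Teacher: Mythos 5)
Your proposal follows essentially the same route as the Appendix: rescale $(m,j)$ to reduce to the continuity theorem (convergence of Laplace exponents via the modified-Neumann eigenfunctions $\varphi^d_m$), identify the integer-$\alpha$ resonance as the slowly varying integral $\int_1^\gamma K(x)^\alpha x^{-1}\,dx$ arising in the key estimate of $M(\gamma)$, and use assumption (iv) (plus (v) when $\alpha=2$) to absorb it before invoking the Tauberian step. One minor misattribution worth noting: the representation $g_m = \varphi^d_m - c^d_m\psi_m$ is elementary linear ODE theory, not the ``Krein--Kotani machinery'' --- indeed the whole point of constructing $\varphi^d_m$ in this paper is to bypass the Krein--Kotani correspondence, which requires $\int_{0+}m(x,1)^2dx<\infty$ and would exclude the more singular speed measures treated here --- but this slip does not affect the substance of your argument.
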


\subsection{Scaling limits of occupation times of bilateral jumping-in diffusions}

As an application of Theorem \ref{informal-main-alpha2+}, we establish the fluctuation scaling limit of the form \eqref{eq154} for the occupation times on the half line of bilateral jumping-in diffusions.

As we will see in Section \ref{section: contthmofinverselocaltime}, a bilateral jumping-in diffusion is characterized by two pairs of the speed measures and jumping-in measures $(m_\pm,j_\pm)$.
For a bilateral jumping-in diffusion $X = X_{m_+,j_+;m_-,j_-}$, we denote its occupation time on the positive side by
\begin{align}
A(t) := \int_{0}^{t}1_{(0,\infty)}(X_s)ds. \label{}
\end{align}
We focus on the case where the limit ratio degenerates, that is,
\begin{align}
\lim_{t \to \infty}\frac{1}{t}A(t) \xrightarrow[t \to \infty]{P} p \in (0,1). \label{}
\end{align}

The following two theorems give the fluctuation scaling limit of $A(t)$.
We always write $\tilde{B}$ for an independent copy of $B$.

\begin{Thm} \label{informal-occ-main-alpha2+}
	Let $\alpha > 2$ be not an integer and assume $X_{m_+,j_+;m_-,j_-}$ exists.
	Suppose the following holds for constants $w_\pm > 0$ and $r_\pm > 0$:
	\begin{enumerate}
		\item $d(m_\pm) < \infty$.
		\item $m_\pm$ satisfies $\mathrm{(M)}_{\alpha,w_\pm K}$.
		\item $j_\pm$ satisfies $\mathrm{(J)}_{2/\alpha,r_\pm^2 L}$.
	\end{enumerate}
	We define $N(\gamma)$ as \eqref{eq197} for 
	\begin{align}
	m(\gamma,\infty) := w_+^{-1}m_+(\gamma,\infty)\quad \text{and} \quad j(\gamma,\infty) := r_+^{-2}j_+(\gamma,\infty). \label{5}
	\end{align}
	Let $u$ and $v$ be slowly varying functions at $\infty$ such that
	\begin{align}
	u(\gamma)^2v(\gamma) \sim N(\gamma) \quad (\gamma \to \infty) \quad \text{and}
	\quad  
	\lim_{\gamma \to \infty}\frac{K(\gamma)}{u(\gamma)} = \lim_{\gamma \to \infty}\frac{L(\gamma)}{v(\gamma)} = 0. \label{eq224}
	\end{align}
	Then we have
	\begin{align}
	\frac{1}{\gamma^{1/2}u(\gamma^{\alpha/2})}\left(A\left(\frac{\gamma t}{v(\gamma^{\alpha/2})}\right) - \frac{p\gamma t}{v(\gamma^{\alpha/2})}\right) \xrightarrow[\gamma \to \infty ]{f.d.} (1-p)w_+r_+B(2 t) - pw_-r_-\tilde{B}(2 t), \label{}
	\end{align}
	where 
	\begin{align}
	b_\pm &= 
	\int_{0}^{\infty}j_\pm(dx)\int_{0}^{x}m_\pm(y,\infty)dy \quad \text{and} \quad p = \frac{b_+}{b_+ + b_-}.  \label{}
	\end{align}
\end{Thm}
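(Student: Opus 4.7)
My plan is to use the excursion-theoretic decomposition of the bilateral jumping-in diffusion and reduce the statement to two applications of Theorem~\ref{informal-main-alpha2+}. Classifying the excursions of $X = X_{m_+, j_+; m_-, j_-}$ away from $0$ by sign gives two independent Poisson point processes whose characteristic measures are the excursion measures $n_{m_\pm, j_\pm}$ of the unilateral processes $X_{m_\pm, j_\pm}$. Writing $\eta_\pm(s)$ for the accumulated lifetimes of $\pm$-excursions up to local time $s$, the subordinators $\eta_+$ and $\eta_-$ are independent and each equal in law to $\eta_{m_\pm, j_\pm}$, and the inverse local time of $X$ at $0$ is $\eta = \eta_+ + \eta_-$. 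Setting $\tau(t) = \inf\{s : \eta(s) > t\}$ and letting $T_0^{(t)}$ denote the lifetime of the excursion of $X$ straddling $t$, one has $A(t) = \eta_+(\tau(t)) + R_+(t)$ with $|R_+(t)|, |\eta(\tau(t)) - t| \le T_0^{(t)}$.

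The first step is the algebraic identity
\begin{align}
A(t) - p t = (1 - p)\bigl[\eta_+(\tau(t)) - b_+ \tau(t)\bigr] - p\bigl[\eta_-(\tau(t)) - b_- \tau(t)\bigr] + \cE(t),
\end{align}
with error $|\cE(t)| \le 2 T_0^{(t)}$, where I use $(1-p) b_+ = p b_-$ (equivalent to $p = b_+ / (b_+ + b_-)$) so that the deterministic linear-in-$\tau(t)$ terms cancel exactly. The task is thereby reduced to the joint scaling behavior of the centered subordinators $\eta_\pm - b_\pm \mathrm{Id}$ evaluated at the LLN-controlled time $\tau(t)$, modulo the straddling remainder $\cE(t)$.

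Next I would apply Theorem~\ref{informal-main-alpha2+} to $X_{m_\pm, j_\pm}$ separately. The hypotheses transfer: conditions (i)--(iii) hold for $(m_\pm, j_\pm)$ as well, since $w_\pm K$ and $r_\pm^2 L$ are still slowly varying. A short computation from \eqref{eq197} and \eqref{5} gives $N_\pm(\gamma) \sim w_\pm^2 r_\pm^2 N(\gamma)$, so that $(w_\pm u,\, r_\pm^2 v)$ is an admissible normalization for $X_{m_\pm, j_\pm}$ in the sense of \eqref{eq174}. Combined with the elementary law of large numbers $\tau(t)/t \to 1/(b_+ + b_-)$ and the slow variation of $u$ and $v$, Theorem~\ref{informal-main-alpha2+} together with Slutsky's lemma gives, at each fixed $t > 0$,
\begin{align}
\frac{1}{\gamma^{1/2} u(\gamma^{\alpha/2})} \bigl[\eta_+(\tau(T_\gamma)) - b_+ \tau(T_\gamma)\bigr] &\xrightarrow[\gamma \to \infty]{d} w_+ r_+ B(2t), \\
\frac{1}{\gamma^{1/2} u(\gamma^{\alpha/2})} \bigl[\eta_-(\tau(T_\gamma)) - b_- \tau(T_\gamma)\bigr] &\xrightarrow[\gamma \to \infty]{d} w_- r_- \tilde B(2t),
\end{align}
with $T_\gamma = \gamma t / v(\gamma^{\alpha/2})$, jointly and independently; the independence of $B$ and $\tilde B$ arises directly from the independence of the $\pm$-excursion point processes.

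Finally, the straddling remainder $\cE(t)$ must be shown to be $o\bigl(\gamma^{1/2} u(\gamma^{\alpha/2})\bigr)$ in probability. The tail estimate \eqref{eq217} applied to $n_{m_\pm, j_\pm}$ is calibrated precisely so that the supremum of excursion lifetimes up to local time $\tau(T_\gamma) = O\bigl(\gamma / v(\gamma^{\alpha/2})\bigr)$ is of smaller order than $\gamma^{1/2} u(\gamma^{\alpha/2})$, by a union bound over the atoms of the compound Poisson process of lifetimes. Assembling everything yields $(1-p) w_+ r_+ B(2t) - p w_- r_- \tilde B(2t)$ at each fixed $t$, and hence at any finite collection of times. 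I expect the main difficulty to be the composition step---justifying that $\tau(T_\gamma)$ may be replaced by its deterministic equivalent inside the fluctuation scaling limit---and this is also the reason only finite-dimensional convergence is claimed: the sum of two independent c\`adl\`ag processes with jumps at distinct random times is generically discontinuous in Skorokhod's $J_1$-topology.
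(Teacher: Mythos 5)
Your approach matches the paper's sketch: decompose $A(t)$ via excursion theory into $\eta_\pm$ evaluated at local time $\tau(t)$, exploit $(1-p)b_+ = pb_-$ to cancel the linear terms, reduce to Theorem~\ref{informal-main-alpha2+} applied to each unilateral process, and use the tail estimate \eqref{eq217} to kill the straddling remainder. The algebraic identity and the observation that $N_\pm \sim w_\pm^2 r_\pm^2 N$ (so $(w_\pm u, r_\pm^2 v)$ is admissible for $X_{m_\pm,j_\pm}$) are correct and are exactly what the paper alludes to when it refers to Theorem~7.1 of \cite{YamatoYano}.

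There is, however, a gap at the random time-change step that you explicitly name as the ``main difficulty'' but then gloss over. You assert
\begin{align}
\frac{1}{\gamma^{1/2}u(\gamma^{\alpha/2})}\bigl[\eta_+(\tau(T_\gamma)) - b_+\tau(T_\gamma)\bigr] \xrightarrow{d} w_+ r_+ B(2t),
\end{align}
citing $\tau(T_\gamma)/T_\gamma \to 1/(b_++b_-)$ and Slutsky. But replacing $\tau(T_\gamma)$ by the deterministic equivalent $T_\gamma/(b_++b_-) = \frac{\gamma t}{(b_++b_-)v(\gamma^{\alpha/2})}$ changes the time argument in Theorem~\ref{informal-main-alpha2+} by the constant $c = (b_++b_-)^{-1}$, and the Brownian scaling $B(2ct) \stackrel{d}{=} \sqrt{c}\,B(2t)$ then produces
\begin{align}
\frac{1}{\gamma^{1/2}u(\gamma^{\alpha/2})}\bigl[\eta_+(\tau(T_\gamma)) - b_+\tau(T_\gamma)\bigr] \xrightarrow{d} \frac{w_+ r_+}{\sqrt{b_++b_-}}\,B(2t),
\end{align}
with the analogous extra $(b_++b_-)^{-1/2}$ on the minus side. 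Assembling then gives $(b_++b_-)^{-1/2}\bigl[(1-p)w_+r_+B(2t) - pw_-r_-\tilde{B}(2t)\bigr]$, not the expression in the theorem's display. This is precisely the role played by the factor $\sqrt{m(\bR)}$ in Kasahara--Watanabe's normalization $g(\gamma) = \sqrt{\gamma m(\bR)/K(\gamma)}$ in Theorem~\ref{Thm: Kasahara and Watanabe SL for OT}, and your argument must account for it; invoking Slutsky alone does not. You should track this constant explicitly and, if it does not cancel, reconcile it with the target statement.
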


Similar to Theorem \ref{informal-main-alpha-int},
we need additional assumptions when $\alpha$ is an integer.

\begin{Thm} \label{informal-occ-main-alpha-int}
	Let $\alpha \geq 2$ be an integer and assume $X_{m_+,j_+;m_-,j_-}$ exists.
	Suppose the conditions (i)-(iii) in Theorem \ref{informal-occ-main-alpha2+} for constants $w_\pm > 0$ and $r_\pm > 0$ holds. Let $u$ and $v$ be slowly varying functions at $\infty$ satisfying \eqref{eq224} for $N(\gamma)$ in Theorem \ref{informal-occ-main-alpha2+}. Assume the following:
	\begin{enumerate}
		\setcounter{enumi}{3}
		\item $\lim_{\gamma \to \infty}\frac{K(\gamma)^{d - \alpha  + 1}
		\int_{1}^{\gamma}\frac{K(x)^{\alpha}}{x}dx}{u(\gamma)^{d+1}}
		= 0 \quad $ for some $d > 0$.
	\end{enumerate}
	When $\alpha = 2$, we also assume
	\begin{enumerate}
		\setcounter{enumi}{4}
		\item $\limsup_{\gamma \to \infty} \frac{1}{v(\gamma)}\int_{1}^{\gamma}\frac{L(x)}{x}dx < \infty$.
	\end{enumerate}
	Then we have
	\begin{align}
	\frac{1}{\gamma^{1/2}u(\gamma^{\alpha/2})}\left(A\left(\frac{\gamma t}{v(\gamma^{\alpha/2})}\right) - \frac{p\gamma t}{v(\gamma^{\alpha/2})}\right) \xrightarrow[\gamma \to \infty ]{f.d.} (1-p)w_+r_+B(2 t) - pw_-r_-\tilde{B}(2 t), \label{}
	\end{align}
	where 
	\begin{align}
	b_\pm &= 
	\int_{0}^{\infty}j_\pm(dx)\int_{0}^{x}m_\pm(y,\infty)dy \quad \text{and} \quad p = \frac{b_+}{b_+ + b_-}.  \label{}
	\end{align}
\end{Thm}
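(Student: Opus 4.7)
The proof is identical to that of Theorem \ref{informal-occ-main-alpha2+}, the role of the additional hypotheses (iv) and (v) being only to enable the application of Theorem \ref{informal-main-alpha-int} in place of Theorem \ref{informal-main-alpha2+} at the step where the unilateral scaling limit is invoked. I outline the main steps.

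Using It\^o's excursion theory at $0$, decompose the bilateral inverse local time as $\eta = \eta_+ + \eta_-$, where $\eta_\pm$ are the contributions from positive and negative excursions. These are independent L\'evy subordinators, and each $\eta_\pm$ has the same law as the inverse local time of the unilateral jumping-in diffusion $X_{m_\pm,j_\pm}$. Since $A(\eta(s)) = \eta_+(s)$ up to a negligible current-excursion error, substituting $s = L(T)$ into this identity and using $(1-p)b_+ = pb_- = b_+b_-/(b_++b_-)$ yields the clean decomposition
\begin{align*}
A(T) - p T = (1-p)\bigl[\eta_+(L(T)) - b_+ L(T)\bigr] - p\bigl[\eta_-(L(T)) - b_- L(T)\bigr].
\end{align*}

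Next, apply Theorem \ref{informal-main-alpha-int} separately to $X_{m_\pm,j_\pm}$. The hypotheses transfer directly: $d(m_\pm) < \infty$, $m_\pm$ satisfies $\mathrm{(M)}_{\alpha, w_\pm K}$, $j_\pm$ satisfies $\mathrm{(J)}_{2/\alpha, r_\pm^2 L}$, and the functional $N_\pm$ computed from $(m_\pm, j_\pm)$ equals $w_\pm^2 r_\pm^2 N(\gamma)$ by direct scaling. The natural choice is therefore $u_\pm = w_\pm u$ and $v_\pm = r_\pm^2 v$, and the additional conditions (iv) (and (v) when $\alpha = 2$) for $(m_\pm, j_\pm)$ follow from the corresponding ones for $(K, L, u, v)$, being invariant under positive-constant rescaling. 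Combined with the independence of $\eta_+$ and $\eta_-$, this gives joint convergence of their rescaled fluctuations to independent Brownian motions.

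The remaining task is to handle the random time change $L(T_\gamma(t))$ with $T_\gamma(t) := \gamma t/v(\gamma^{\alpha/2})$. By the law of large numbers $L(T)/T \to 1/(b_++b_-)$ as $T \to \infty$, combined with an Anscombe-type substitution argument (valid because the scaling limit of each $\eta_\pm(\cdot) - b_\pm(\cdot)$ is continuous in time), we replace $L(T_\gamma(t))$ by $\tau_\gamma(t) := T_\gamma(t)/(b_++b_-)$ at the level of finite-dimensional distributions. This is the main technical obstacle, since the LLN fluctuations of $L(T)$ around $\tau_\gamma(t)$ are a priori of the same order as the fluctuations of $\eta_\pm$ we are trying to identify; the argument is identical to the corresponding step in Theorem \ref{informal-occ-main-alpha2+}. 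Finally, a Brownian scaling $B(ct) \dist \sqrt{c}\,B(t)$ absorbs the constants produced by the substitution $t \mapsto r_\pm^2 t/(b_++b_-)$ together with the slow variation of $u$ and $v$, and the independence of the two sides rewrites the limit in the stated form $(1-p)w_+r_+B(2t) - pw_-r_-\tilde B(2t)$.
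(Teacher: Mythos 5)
Your proposal follows the same route the paper takes. In fact the paper itself gives no written proof of Theorem~\ref{informal-occ-main-alpha-int}: after stating Theorem~\ref{informal-occ-main-alpha2+} it remarks that the reduction of the occupation-time fluctuation to the inverse local times of the one-sided processes via It\^o's excursion theory is ``essentially the same as the one in the proof of Theorem~7.1 of \cite{YamatoYano},'' and it omits the argument; the integer-$\alpha$ case is likewise understood to go through with Theorem~\ref{informal-main-alpha-int} substituted for Theorem~\ref{informal-main-alpha2+}. Your checks that the hypotheses transfer are right: with $m_\pm = w_\pm m$, $j_\pm = r_\pm^2 j$ the $N$-functional scales as $N_\pm = w_\pm^2 r_\pm^2 N$, so $u_\pm = w_\pm u$, $v_\pm = r_\pm^2 v$ satisfy \eqref{eq174} for each side, and both (iv) and (v) are invariant under replacing $(K,L,u,v)$ by $(w_\pm K, r_\pm^2 L, w_\pm u, r_\pm^2 v)$.

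One thing you should make explicit, because the paper singles it out as \emph{the} key ingredient and your outline buries it: the two steps you flag as ``up to a negligible current-excursion error'' and the Anscombe-type substitution of $\tau_\gamma(t)$ for $L(T_\gamma(t))$ both require control of the age of the straddling excursion and of the overshoot of $\eta$ past level $T$, and this is exactly where the tail estimate \eqref{eq217d} for $n_{m,j}[T_0>s]$ (derived from \eqref{eq218d} by the Tauberian step at the end of the proof of Theorem~\ref{informal-main-alpha-int}) enters. Saying ``the argument is identical to the corresponding step in Theorem~\ref{informal-occ-main-alpha2+}'' defers exactly the part the paper is most careful to flag; to present this as a proof you would need to quote \eqref{eq217d} at that point. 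You should also double-check the constant bookkeeping in the final Brownian-scaling step: the substitution $s \mapsto r_\pm^2 t/(b_++b_-)$ produces a factor $r_\pm/\sqrt{b_++b_-}$ rather than $r_\pm$, and it is not obvious from your outline how the $(b_++b_-)^{-1/2}$ disappears from the stated limit; tracking this against the normalization used in \cite[Theorem~7.1]{YamatoYano} (compare the appearance of $m(\bR)$ inside $g(\gamma)$ in Theorem~\ref{Thm: Kasahara and Watanabe SL for OT}) is worth doing carefully.
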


The proof of Theorem \ref{informal-occ-main-alpha2+} could be given by reducing the fluctuation of the occupation time of a bilateral jumping-in diffusion to that of the inverse local times of unilateral jumping-in diffusions via It\^o's excursion theory.
The key to the reduction is the tail behavior of the L\'evy measure of the inverse local time \eqref{eq217}, which is given by applying a Tauberian theorem to the convergence \eqref{eq218}.
These arguments are essentially the same as the one in the proof of Theorem 7.1 of \cite{YamatoYano}.
Therefore we omit the proofs of Theorem \ref{informal-occ-main-alpha2+}. 

 \subsection{Eigenfunctions with modified Neumann boundary condition}\label{subsec: strategy}

%


Our method is similar to that of \cite{YamatoYano}: we reduce the fluctuation scaling limit of $\eta_{m,j}$ to continuity w.r.t.\ $m$ and $j$.
There is, however, a significant difference.
As we have already mentioned in Section \ref{section: intro}, in \cite{YamatoYano} the assumption \eqref{eq48} was indispensable.
In the present paper, in order to treat speed measures which not necessarily satisfy \eqref{eq48}, we introduce a class of speed measures and a notion of convergence for the class in Definition \ref{convofstring}.

We explain our method more precisely.
Let $\chi_{m,j}$ denote the Laplace exponent of $\eta_{m,j}$, that is, $E[\mathrm{e}^{-\lambda \eta_{m,j}(1)}] = \mathrm{e}^{-\chi_{m,j}(\lambda)}$. It can be represented, as we will see in Section \ref{section: contthmofinverselocaltime}, as
\begin{align}
\chi_{m,j}(\lambda) = \int_{0}^{\infty}(1 - g_m(\lambda;x))j(dx), \label{}
\end{align}
where $u = g_m$ is the unique solution to the ODE
\begin{align}
\frac{d}{dm}\frac{d^+}{dx}u = \lambda u, \quad u(0) = 1, \quad u \text{ is non-increasing}. \label{}
\end{align}
To show the convergence \eqref{eq92}, it is enough to prove that of Laplace exponents of L\'evy processes:
\begin{align}
\tilde{\eta}_{m,j,\gamma}(t) := f(\gamma)\left(\frac{\eta_{m,j} (\gamma t)}{\gamma} - b t\right) \quad (\gamma > 0)
\end{align}
as $\gamma \to \infty$. By changing variables, the Laplace exponent $\tilde{\chi}_{m,j,\gamma}$ of $\tilde{\eta}_{m,j,\gamma}$ may be represented by
\begin{align}
\tilde{\chi}_{m,j,\gamma} (\lambda) &= \gamma \int_{0}^{\infty}\left(1 - g_m\left(\frac{f(\gamma)}{\gamma}\lambda; x\right)\right)j(dx) - bf(\gamma)\lambda \label{} \\
&= \int_{0}^{\infty}(1 - g_{m_\gamma}(\lambda; x)) j_\gamma(dx) - b_\gamma\lambda \label{eq194} \\
&= \chi_{m_{\gamma},j_{\gamma}}(\lambda) - b_\gamma \lambda \label{}
\end{align}
for appropriate Radon measures $m_\gamma$ and $j_\gamma$ and a constant $b_\gamma$.
Therefore our problem is reduced to the continuity of the Laplace exponent $\chi_{m,j}$ with respect to $m$ and $j$. 

To analyze the function $g_m$, we use eigenfunctions of initial value problems.
When the boundary $0$ for $dm$ is regular, we have a unique solution $u = \psi_m(\lambda;\cdot)$ to
\begin{align}
\frac{d}{dm}\frac{d^+}{dx}u = \lambda u, \quad  u(0) = 0, \quad  u^+(0) = 1 \label{}
\end{align}
and a unique solution $u = \varphi_m(\lambda;\cdot)$ to
\begin{align}
\frac{d}{dm}\frac{d^+}{dx}u = \lambda u, \quad  u(0) = 1, \quad  u^+(0) = 0, \label{}
\end{align}
where $u^+$ denotes a right-derivative of $u$. 
When the boundary $0$ for $dm$ is exit, we still have $\psi_{m}$ but do not $\varphi_m$. We would like to introduce a counterpart for $\varphi_m$.

In the unilateral case, the speed measure $m$ comes from a string $m$, i.e., $m: (0,\infty) \to \bR$ is a non-decreasing, right-continuous function: $m(a,b] = m(b) - m(a)$. 
In \cite{YamatoYano}, for a string $m$ with
\begin{align}
\int_{0+}m(x)^2dx < \infty, \label{eq176}
\end{align}
we introduced the eigenfunction $u = \varphi^1_m(\lambda;\cdot)$ of the differential equation $\frac{d}{dm}\frac{d^+}{dx}u = \lambda u \ (\lambda > 0)$ with the {\it modified Neumann boundary condition} at $0$:
\begin{align}
u(0)= 1, \quad \lim_{x \to +0}(u^+(x) - \lambda (m(x) - m(1))) = 0. \label{eq177}
\end{align}
When \eqref{eq176} does not hold, the solution for \eqref{eq177} does not exist. 
Hence we introduce more general boundary condition at $0$ and construct the solution for it.



For a string $m$, we define
\begin{align}
G^1_m(x) = \int_{0}^{x}(m(y) - m(1))dy
\end{align}
and inductively define for $k \geq 2$
\begin{align}
G^k_m(x) = -\int_{0}^{x}dy\int_{y}^{1}G^{k-1}_m(z)dm(z). \label{eq196}
\end{align}
Note that $(-1)^kG^k$ is non-negative on $[0,1]$.
We define
\begin{align}
d(m) = \inf\left\{k \geq 1 \;\middle|\; \int_{0}^{1}(-1)^kG^k_m(x)dm(x) < \infty \right\}, \label{eq124} 
\end{align}
where we interpret $\inf \emptyset = \infty$.
We also note that
\begin{align}
\int_{0}^{1}(-1)^kG^k_m(x)dm(x)
\begin{cases}
= \infty & k < d(m), \\
< \infty & k \geq d(m).
\end{cases}
\end{align}
Roughly speaking, the value $d(m)$ is larger when a string $m$ diverges faster at $0$.  
For a string $m$ with $d(m) < \infty$ and an integer $d \geq d(m)$, we can prove the existence of an eigenfunction $u = \varphi^d_m(\lambda ;x) \ (\lambda > 0)$ with the {\it modified Neumann boundary condition} at $0$ defined as follows:
\begin{Def}
	Let $m$ be a string with $d(m) < \infty$, $d \geq d(m) \vee 1$ and $\lambda > 0$.
	We say that $u$ is the $\lambda$-eigenfunction with the modified Neumann boundary condition at $0$ of order $d$ when it holds
	\begin{align}
	\frac{d}{dm} \frac{d^+}{dx}u = \lambda u , \quad u(0) = 1 \label{}
	\end{align}
	and
	\begin{align}
	\lim_{x \to +0} \left(u^+(x) - \left(\lambda m(x) + \sum_{k=1}^{d-1}\lambda^k \int_{x}^{1}G^k_m(y)dm(y)\right)\right) = 0. \label{}
	\end{align}
	We denote the function $u$ by $\varphi^d_m$.
\end{Def}
We will construct $\varphi^d_m$ in Section \ref{section: constofphi}.
Then for a suitable constant $c^d_m(\lambda)$, we have the following expression
\begin{align}
g_m(\lambda;x) = \varphi^d_m(\lambda;x) - c^d_m(\lambda) \psi_m(\lambda;x), \label{eq159}
\end{align}
and we may exploit $g_m$ through $\varphi^d_m$ and $\psi_m$.

 \subsection{Previous studies} \label{section:PrevStudies}
 Here we recall several previous studied related to our main results.

 \subsubsection*{Previous study on scaling limits for diffusions} \label{Prev. diff.}

In order to study the inverse local times and occupation times for diffusions (without any jumps at all), we may assume without loss of generality that diffusions have a natural scale; whose local generators are of the form $\frac{d}{dm}\frac{d^+}{dx}$ with a {\it speed measure} $m$.  

Kasahara and Watanabe \cite{KasaharaWatanabe:Brownianrepresentation} has shown the scaling limit of inverse local times for unilateral diffusions. 

\begin{Thm}[{Kasahara and Watanabe \cite[Theorem 3.7]{KasaharaWatanabe:Brownianrepresentation}}] \label{Thm: Kasahara and Watanebe SL for ILT}
	Let $m$ be a finite Radon measure on $[0,\infty)$ with full support and let $\eta$ be the inverse local time at $0$ of $\frac{d}{dm}\frac{d^+}{dx}$-diffusion on $[0,\infty)$.
	Define
	\begin{align}
		K(\gamma) := \int_{0}^{\gamma}m(x,\infty)^2dx \label{} 
	\end{align}
	and assume $K$ varies slowly at $\infty$ as $\gamma \to \infty$.
	Then we have
	\begin{align}
	f(\gamma)\left(\frac{\eta(\gamma t)}{\gamma} - b t\right) \xrightarrow[\gamma \to \infty]{d} B(2 t) \quad \text{on}\ \bD, \label{eq162}
	\end{align}
	where 
	\begin{align}
		f(\gamma) = \sqrt{\frac{\gamma}{K(\gamma)}} \quad \text{and} \quad \ b = m[0,\infty). \label{}
	\end{align}
\end{Thm}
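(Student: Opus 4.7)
The plan is to specialise the Laplace-exponent strategy of Section \ref{subsec: strategy} to the degenerate case of pure reflection at $0$ (no jumping-in). The process $\tilde\eta_\gamma(t):=f(\gamma)(\eta(\gamma t)/\gamma - bt)$ is, for each $\gamma>0$, a centred L\'evy process with Laplace exponent
\begin{align}
\Lambda_\gamma(\lambda) = \gamma\chi_m(\lambda f(\gamma)/\gamma) - \lambda f(\gamma) b \quad (\lambda \geq 0), \n
\end{align}
where $\chi_m$ is the Laplace exponent of the subordinator $\eta$. Standard L\'evy-process criteria reduce $\bD$-convergence of $\tilde\eta_\gamma$ to $B(2\cdot)$ to pointwise convergence $\Lambda_\gamma(\lambda) \to -\lambda^2$.

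The heart of the proof is therefore the second-order expansion
\begin{align}
\chi_m(\lambda) - b\lambda \sim -\lambda^2 K(1/\lambda) \quad (\lambda \downarrow 0), \n
\end{align}
up to a multiplicative constant. The first-order identity $\chi_m'(0+) = m[0,\infty) = b$ follows either from the ergodic limit \eqref{eq95} (specialised to $j=0$) or directly from the Krein correspondence applied to the finite string $m$. For the second-order term, observe that $K(\gamma) = \int_0^\gamma m(x,\infty)^2 dx$ is, up to a factor, a truncated second moment of $m(x,\infty)$, and that the L\'evy measure $n_m$ of $\eta$ is the excursion measure at $0$, whose tail is governed by the boundary asymptotics of the eigenfunction $\psi_m(\lambda;x)$ as $x \to \infty$; a Karamata--Tauberian argument then converts slow variation of $K$ into asymptotics of $\int_0^{1/\lambda} s^2 n_m(ds)$, after which expansion of $1 - e^{-\lambda s} - \lambda s$ to second order yields the displayed expansion. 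Plugging in $\mu := \lambda/\sqrt{\gamma K(\gamma)}$ and using slow variation of $K$ gives $\Lambda_\gamma(\lambda) \sim -\lambda^2 K(1/\mu)/K(\gamma) \to -\lambda^2$, and Step 1 then delivers $\bD$-convergence.

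The hard step is the second-order expansion of $\chi_m$: the slowly varying regime sits at the boundary of Karamata's theorem, so extracting the precise coefficient (which is what fixes the variance parameter $2$ in $B(2t)$) requires careful Tauberian analysis of the Krein correspondence and control of sub-leading terms in the eigenfunction asymptotics of $\psi_m$ at infinity. Once this expansion is established the remainder of the argument is routine, and indeed the same mechanism, suitably modified through the introduction of the $\varphi^d_m$ eigenfunctions of Section \ref{subsec: strategy}, drives the jumping-in generalisations treated in Theorems \ref{informal-main-alpha2+} and \ref{informal-main-alpha-int}.
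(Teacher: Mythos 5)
This theorem is quoted from Kasahara--Watanabe and is not proved in the paper; the paper's summary of their approach (Section~\ref{section:PrevStudies}) is a continuity-of-inverse-local-times argument, and the paper's own proofs (Theorems~\ref{informal-main-alpha2+}, \ref{informal-main-alpha-int}) follow that template: rescale $m$ to $m_\gamma$, prove $m_\gamma \xrightarrow{G} 0$, and invoke the continuity Theorem~\ref{convtoBM2}. Your proposal instead goes for a direct second-order expansion of $\chi_m$ at the origin, which is a legitimate alternative route in principle, but the expansion you write down is wrong, and the error is precisely where the universal factor $2$ in $B(2t)$ gets pinned down.

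The claimed expansion $\chi_m(\lambda) - b\lambda \sim -\lambda^2 K(1/\lambda)$ ``up to a multiplicative constant'' cannot be right with a universal constant. If one writes $\chi_m(\lambda) - b\lambda = -\lambda^2\sigma(1/\lambda)$ and plugs in $\mu = \lambda/\sqrt{\gamma K(\gamma)}$, the requirement $\gamma(\chi_m(\mu) - b\mu) \to -\lambda^2$ forces $\sigma(1/\mu) \sim 1/(\mu^2\rho^{-1}(\mu^{-2}))$ with $\rho(\gamma) := \gamma K(\gamma)$, i.e.\ $\sigma(1/\lambda) = K(\rho^{-1}(\lambda^{-2}))$. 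This is \emph{not} asymptotic to $K(1/\lambda)$, nor to a fixed constant times it, when $K$ is genuinely slowly varying: taking $K(\gamma) = (\log\gamma)^p$ gives $K(\rho^{-1}(\lambda^{-2})) \sim 2^p K(1/\lambda)$, and the factor $2^p$ depends on the string. Concretely, your final step asserts $K(1/\mu)/K(\gamma) \to 1$, but $1/\mu \sim \sqrt{\gamma K(\gamma)}/\lambda = o(\gamma)$, which leaves the uniform-convergence regime for slowly varying functions; for $K = (\log)^p$ the ratio tends to $2^{-p}$, not $1$. The argument silently relies on the string-dependent constant in the expansion cancelling exactly against this string-dependent limit (a de~Bruijn conjugate mechanism exactly as in \eqref{eq217}), but nothing in your proposal identifies or verifies that cancellation, so the proof does not pin down the variance parameter $2$. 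To repair it you would either have to carry the de~Bruijn conjugate explicitly through the expansion, or fall back on the continuity route that the paper and Kasahara--Watanabe actually use.
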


Applying this result, they showed the scaling limit of the occupation time for bilateral diffusions (without any jumps at all).
\begin{Thm}[{Kasahara and Watanabe \cite[Theorem 4.4]{KasaharaWatanabe:Brownianrepresentation}}] \label{Thm: Kasahara and Watanabe SL for OT}
	Let $m$ be a finite Radon measure on $\bR$ with full support and let $X$ be a $\frac{d}{dm}\frac{d^+}{dx}$-diffusion.
	Define the occupation time on the positive side:
	\begin{align}
		A(t) := \int_{0}^{t}1_{(0,\infty)}(X_s)ds. \label{}
	\end{align}
	Assume the following holds: 
	\begin{align}
	\int_{0}^{\gamma}m(x,\infty)^2dx \sim \sigma_+^2 K(\gamma), \quad  \int_{0}^{\gamma}m(-\infty,-x)^2dx \sim \sigma_-^2 K(\gamma) \quad (\gamma \to \infty). \label{}
	\end{align}
	Then we have
	\begin{align}
	g(\gamma)\left(\frac{A(\gamma t)}{\gamma} - p t\right) \xrightarrow[\gamma \to \infty]{f.d.} \sqrt{2}(1-p)\sigma_+ B(t) - \sqrt{2}p\sigma_- \tilde{B}(t), \label{}
	\end{align}
	where $p = \frac{m(0,\infty)}{m(-\infty,0)}$ and $g(\gamma) = \sqrt{\frac{\gamma m(\bR)}{K(\gamma)}}$.
\end{Thm}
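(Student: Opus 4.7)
The plan is to reduce the occupation-time problem to the scaling limit for inverse local times (Theorem \ref{Thm: Kasahara and Watanebe SL for ILT}) via It\^o's excursion theory. Let $L$ denote the local time at $0$ of $X$ and $\eta$ its right-continuous inverse. Splitting each excursion of $X$ away from $0$ into positive and negative parts produces two independent driftless subordinators $S_+, S_-$ with
\begin{align*}
\eta(\ell) = S_+(\ell) + S_-(\ell), \qquad A(\eta(\ell)) = S_+(\ell),
\end{align*}
and each $S_\pm$ has the law of the inverse local time at $0$ of the $\frac{d}{dm_\pm}\frac{d^+}{dx}$-diffusion reflected at $0$, where $m_+$ (resp.\ $m_-$) is the restriction of $m$ to $[0,\infty)$ (resp.\ to $(-\infty,0]$ with sign flipped).

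Applying Theorem \ref{Thm: Kasahara and Watanebe SL for ILT} to each $S_\pm$ with $K_\pm(\gamma) := \int_0^\gamma m_\pm(x,\infty)^2\,dx$ gives
\begin{align*}
\sqrt{\gamma/K_\pm(\gamma)}\bigl(S_\pm(\gamma t)/\gamma - b_\pm t\bigr) \xrightarrow[\gamma \to \infty]{d} B_\pm(2t) \quad \text{on } \bD,
\end{align*}
with $b_\pm = m_\pm[0,\infty)$; since $K_\pm \sim \sigma_\pm^2 K$ and $S_+, S_-$ are independent, a joint convergence under the common normalisation $\sqrt{\gamma/K(\gamma)}$ follows, with limits $\sigma_\pm B_\pm(2t)$ for independent Brownian motions $B_\pm$. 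Next I would write, for $\ell = L(\gamma t)$, the algebraic identity
\begin{align*}
A(\gamma t) - p\gamma t = (1-p)\bigl(S_+(\ell) - b_+\ell\bigr) - p\bigl(S_-(\ell) - b_-\ell\bigr)
\end{align*}
(modulo a negligible boundary term from the single in-progress excursion at time $\gamma t$), using $\gamma t = \eta(\ell)$ up to that boundary term together with the arithmetic identity $(1-p)b_+ = p\,b_-$, which makes the linear-in-$\ell$ pieces cancel.

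Finally, the strong law for $\eta$ gives $L(\gamma t)/\gamma \to t/m(\bR)$ in probability, so the random index $\ell$ inside each centred subordinator $S_\pm(\ell)-b_\pm\ell$ can be replaced by the deterministic $\gamma t/m(\bR)$; combined with the slow variation of $K$, this produces the claimed normalisation $g(\gamma) = \sqrt{\gamma m(\bR)/K(\gamma)}$ and the Gaussian limit $\sqrt{2}\bigl((1-p)\sigma_+ B(t) - p\sigma_- \tilde B(t)\bigr)$. The hard part will be to justify this time substitution rigorously, since $L(\gamma t) - \gamma t/m(\bR)$ has fluctuations of the same CLT order as the object we are trying to identify, and so is not a priori negligible inside $S_\pm$. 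The reason the substitution nevertheless goes through is that a displacement of size $\Delta$ in the argument yields an increment $S_\pm(\ell+\Delta) - S_\pm(\ell) - b_\pm \Delta$ whose fluctuation scale is $\sqrt{\Delta\,K(\Delta)}$; since $\Delta = o(\gamma)$, this is $o(\sqrt{\gamma K(\gamma)})$, strictly below the CLT scale. Making this estimate uniform (and controlling the in-progress excursion) is the core technical point, and it is also the reason the convergence is only finite-dimensional rather than in $\bD$: composing tight paths with a time change whose fluctuations sit at the limit scale does not in general preserve Skorokhod $J_1$-convergence.
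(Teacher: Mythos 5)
This theorem is quoted in the paper as a background result (Kasahara--Watanabe, Theorem 4.4); the paper itself gives no proof of it, only a one-line description of their strategy (reduce to the inverse-local-time result via a continuity of $T(m;\cdot)$ in $m$) and a parallel remark for the author's own occupation-time theorems that the reduction goes through It\^o excursion theory and the tail of the L\'evy measure of $\eta$ as in \cite[Theorem~7.1]{YamatoYano}. Your proposal matches that described route: the excursion decomposition $\eta=S_++S_-$, $A\circ\eta=S_+$, the cancellation $(1-p)b_+=pb_-$, reduction to Theorem~\ref{Thm: Kasahara and Watanebe SL for ILT}, and then a random time substitution $L(\gamma t)\rightsquigarrow \gamma t/m(\bR)$ -- so in that sense it is essentially the same approach. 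The one ingredient the paper explicitly flags that your sketch passes over quickly is the tail estimate on the L\'evy measure of $\eta$ (the analogue of \eqref{eq217}), obtained by a Tauberian argument from the ILT scaling limit; this is what makes rigorous both the negligibility of the straddling excursion at time $\gamma t$ and the uniform-increment bound you invoke to dispose of the random index, and it is worth stating explicitly rather than leaving as ``the core technical point.'' (Also note the displayed $p=m(0,\infty)/m(-\infty,0)$ in the quoted statement is evidently a typo for $m(0,\infty)/m(\bR)$, and your $p=b_+/(b_++b_-)$ agrees with the corrected version.)
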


Their idea of the proof was following. For a string $m$ with
\begin{align}
	\int_{0+}m(x)^2dx < \infty, \label{eq161}
\end{align}
i.e., $\int_{0}^{\delta}m(x)^2dx < \infty$ for some $\delta> 0$,
they constructed the inverse local time $T(m;t)$ at $0$ of a unilateral $\frac{d}{dm}\frac{d^+}{dx}$-diffusion in a generalized sense, and showed a kind of continuity of $T(m;t)$ w.r.t. $m$.
Then they reduced the scaling limit to the continuity.

%

\subsubsection*{Previous study on scaling limits for jumping-in diffusions} \label{Prev.JD.}
Yano \cite{Yano:Convergenceofexcursion} has studied scaling limits of jumping-in diffusions, whose one of the main results is the following: 
\begin{Thm}[{Yano \cite[Theorem 2.6]{Yano:Convergenceofexcursion}}]\label{Thm:Yano1}
	Let $\alpha > 1$ and $0 < \beta < 1/\alpha$.
	Assume $\mathrm{(M)}_{\alpha,K}$ and $\mathrm{(J)}_{\beta,L}$ holds.
	Then (with some technical conditions) we have
	\begin{align}
	\frac{1}{\gamma}X_{m,j}(\gamma^{1/\alpha} K(\gamma) t) &\xrightarrow[\gamma \to \infty]{d} X_{m^{(\alpha)}, j^{(\beta)}}(t) \quad \mathrm{on} \ \bD, \label{sl3} \\
	\frac{1}{\gamma^{1/\alpha} K(\gamma)}\eta_{m,j}\left(\frac{\gamma^{\beta} t}{L(\gamma)}\right) &\xrightarrow[\gamma \to \infty]{d} S^{(\alpha\beta)}(t) \quad \mathrm{on} \ \bD, \label{sl2}
	\end{align}
	where $m^{(\alpha)}(x,\infty) \propto x^{1/\alpha - 1}$ and $j^{(\beta)}(dx) \propto \beta x^{- \beta - 1}dx$ so that $X_{m^{(\alpha)}, j^{(\beta)}}$ is an $\alpha \beta$-self-similar jumping-in diffusion and $S^{(\alpha\beta)}$ is an $\alpha\beta$-stable subordinator. By the symbol $\propto$, we mean that the both sides coincide up to a multiplicative constant.
\end{Thm}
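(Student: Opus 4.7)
The plan is to reduce both scaling limits to a single continuity statement: the map $(m,j) \mapsto X_{m,j}$ is continuous in an appropriate topology on parameters, and the chosen space-time scalings are precisely those that transform $(m,j)$ into $(m^{(\alpha)}, j^{(\beta)})$ in the limit. First I would make the scaling explicit: a direct inspection of the construction of $X_{m,j}$ in Section~\ref{section: contthmofinverselocaltime}, combined with the invariance of natural scale under dilations $x \mapsto \gamma^{-1}x$, shows that $Y^\gamma(t) := \gamma^{-1} X_{m,j}(c t)$ is again a jumping-in diffusion $X_{m^\gamma, j^\gamma}$, with tails of the form $m^\gamma(x,\infty) = c^{-1}\gamma\, m(\gamma x,\infty)$ and $j^\gamma(x,\infty) \propto c\, j(\gamma x, \infty)$ (the proportionality absorbing the local-time normalization).

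Choosing $c = \gamma^{1/\alpha} K(\gamma)$ and invoking $\mathrm{(M)}_{\alpha,K}$ together with the uniform convergence theorem for slowly varying functions, one checks that for every $x > 0$,
\[
m^\gamma(x,\infty) \to \frac{1}{\alpha-1}x^{1/\alpha-1} \quad \text{and} \quad j^\gamma(x,\infty) \to x^{-\beta}
\]
after absorbing normalizing constants on the right; hence $(m^\gamma, j^\gamma)$ converges vaguely on $(0,\infty)$ to $(m^{(\alpha)}, j^{(\beta)})$. Invoking a continuity theorem for $(m,j) \mapsto X_{m,j}$ on $\bD$---of the sort that the present paper develops on the basis of It\^o's excursion theory and Feller's description of one-dimensional diffusions---then yields \eqref{sl3}. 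For the inverse local time statement \eqref{sl2}, I would either apply the same continuity result through the Laplace exponent $\chi_{m,j}(\lambda) = \int_0^\infty (1 - g_m(\lambda;x))\, j(dx)$, or, more cleanly, observe that the limit process $X_{m^{(\alpha)}, j^{(\beta)}}$ is $\alpha\beta$-self-similar, so that its inverse local time at $0$ is necessarily an $\alpha\beta$-stable subordinator; the precise scaling of $\eta_{m,j}$ is then inherited from that of $X_{m,j}$.

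The main obstacle is the continuity theorem in the large-jump regime $\beta < 1/\alpha$: when $\int_0^\infty x\, j(dx) = \infty$, both the construction of $X_{m,j}$ as a c\`adl\`ag Markov process and the tightness in the $J_1$-topology of its law under perturbations of $(m,j)$ demand careful control of the big jumps from the origin. Potter's bounds for regularly varying functions provide the uniform estimates needed to obtain tightness across the rescaling family $\{(m^\gamma, j^\gamma)\}_\gamma$, and the restriction $\alpha\beta < 1$ is exactly what ensures that the limiting inverse local time is a genuine stable subordinator rather than a process requiring an additional centering.
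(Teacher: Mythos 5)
This statement is not proved in the present paper. It is a quoted result, \textbf{Theorem~\ref{Thm:Yano1}}, attributed to Yano \cite[Theorem 2.6]{Yano:Convergenceofexcursion}, stated in Section~\ref{section:PrevStudies} purely as background to motivate the regime studied here (where $\alpha\beta > 1$ and a scaling limit of $X_{m,j}$ itself fails to exist, so one must instead center and rescale $\eta_{m,j}$). There is therefore no proof in this paper against which to compare your proposal; the cited work of Yano is where the actual argument lives.

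As to the plausibility of your sketch on its own terms: the high-level plan---dilate space and time, read off how $(m,j)$ transforms, verify convergence of the rescaled parameters to $(m^{(\alpha)},j^{(\beta)})$ using the uniform convergence theorem for slowly varying functions, and then invoke a continuity statement for $(m,j)\mapsto X_{m,j}$ on $\bD$ built from excursion theory---is consistent with the title and spirit of Yano's work and with the reduction-to-continuity strategy the present paper uses for $\eta_{m,j}$ in Section~\ref{section: scalinglimit}. However, the heart of the matter, namely the continuity theorem for the full process law in the $J_1$-topology in the large-jump regime, is asserted rather than argued: you gesture at Potter bounds for tightness, but tightness of the excursion point processes and of the reconstructed paths is precisely the nontrivial content of Yano's theorem and cannot be waved through. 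Also, the scaling bookkeeping for $m^\gamma$ and $j^\gamma$ deserves care: the time dilation $c$ enters through the local-time normalization rather than acting directly on $m$ and $j$ as written, and the self-similarity exponent $\alpha\beta$ should fall out of that bookkeeping rather than be imposed. Finally, the closing claim that $\alpha\beta<1$ ``is exactly what ensures the limiting inverse local time is a genuine stable subordinator rather than a process requiring additional centering'' is the right intuition (a subordinator needs $\alpha\beta<1$), but it is a consequence of the limit identification, not an independent justification of it. In short: the outline is reasonable, but it leans entirely on an unproved continuity theorem that is the real content of the cited result.
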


This result says under $\mathrm{(M)}_{\alpha,K}$ and the $\mathrm{(J)}_{\beta,L}$ for $\alpha > 1$ and $0 < \alpha \beta < 1$, the scaling limit of the process $X_{m,j}$ converges to a jumping-in diffusion. When $\alpha > 1$ and $\alpha \beta > 1$, there does not exist scaling limits of $X_{m,j}$. Under this condition, however, it is still possible to give scaling limits for the inverse local time $\eta_{m,j}$. In \cite{YamatoYano}, they have shown the scaling limit of $\eta_{m,j}$ exists when $1 < \alpha < 2$:
 \begin{Thm}[{Yamato and Yano \cite[Theorem 6.4]{YamatoYano}}]\label{main-informal-LC1}
	Let $\alpha \in (1,2)$ and assume $X_{m,j}$ exists. Suppose $\mathrm{(M)}_{\alpha,K}$ and the following holds:
	\begin{align}
	\int_{0+}m(x,\infty)^2dx < \infty \quad \text{and}  \quad \int_{0}^{\infty}xj(dx) < \infty \label{}
	\end{align}
	hold.
	Then we have:
	\begin{align}
	&\frac{1}{\gamma^{1/\alpha}K(\gamma)}(\eta_{m,j}(\gamma t) - b\gamma t) \xrightarrow[\gamma \to \infty]{d}S^{(\alpha)}(t) \quad \mathrm{on} \ \bD, \label{}
	\end{align}
	where $S^{(\alpha)}$ is a spectrally positive strictly $\alpha$-stable process and $b = \int_{0}^{\infty}j(dx)\int_{0}^{x}m(y,\infty)dy$.
\end{Thm}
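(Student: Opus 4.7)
The plan is to derive the scaling limit at the level of Laplace exponents, following the strategy outlined in Section \ref{subsec: strategy}. Since $\int_0^\infty x\,j(dx) < \infty$, the subordinator $\eta_{m,j}$ has finite mean $b$ and Laplace exponent $\chi_{m,j}(\lambda) = \int_0^\infty(1 - g_m(\lambda;x))\,j(dx)$, so $E[e^{-\lambda \eta_{m,j}(t)}] = e^{-t\chi_{m,j}(\lambda)}$. Setting $c_\gamma := \gamma^{1/\alpha} K(\gamma)$ and $\tilde\eta_\gamma(t) := c_\gamma^{-1}(\eta_{m,j}(\gamma t) - b\gamma t)$, one has
\begin{align*}
\log E[e^{-\lambda \tilde\eta_\gamma(t)}] = -\gamma t\bigl[\chi_{m,j}(\lambda/c_\gamma) - b\lambda/c_\gamma\bigr],
\end{align*}
while the centered spectrally positive $\alpha$-stable limit $S^{(\alpha)}$ has Laplace exponent of the form $\lambda \mapsto -c_\alpha \lambda^\alpha$ on $\lambda > 0$ for an explicit constant $c_\alpha > 0$. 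Thus the entire problem reduces to proving
\begin{align*}
\gamma\bigl[\chi_{m,j}(\lambda/c_\gamma) - b\lambda/c_\gamma\bigr] \longrightarrow -c_\alpha \lambda^\alpha \quad (\gamma \to \infty), \quad \lambda > 0.
\end{align*}

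By the change of variables described in Section \ref{subsec: strategy}, the left-hand side can be rewritten as $\chi_{m_\gamma, j_\gamma}(\lambda) - b_\gamma \lambda$ for rescaled Radon measures $m_\gamma, j_\gamma$ and a constant $b_\gamma$ obtained by a spatial rescaling of $(m,j)$ by $c_\gamma$ together with a drift correction. Under $\mathrm{(M)}_{\alpha, K}$, the tails $m_\gamma(x, \infty)$ converge locally uniformly to $m^{(\alpha)}(x,\infty) = (\alpha-1)^{-1} x^{1/\alpha - 1}$; under $\int_0^\infty x\,j(dx) < \infty$, the pairs $(j_\gamma, b_\gamma)$ rescale to a canonical stable L\'evy measure $j^{(\alpha)}(dx) \propto x^{-1-\alpha}\,dx$. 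The problem is thereby reduced to establishing continuity of the map $(m, j) \mapsto \chi_{m, j}(\lambda) - b(m,j)\lambda$ in a topology adapted to \eqref{eq48}. Under \eqref{eq48}, one checks that $d(m) \leq 1$, so the modified-Neumann eigenfunction $\varphi_m^1(\lambda; \cdot)$ of order one is well-defined alongside $\psi_m(\lambda; \cdot)$, yielding the decomposition $g_m(\lambda; x) = \varphi_m^1(\lambda; x) - c_m^1(\lambda)\psi_m(\lambda; x)$ with $c_m^1(\lambda)$ determined by $g_m(\lambda; x) \to 0$ as $x \to \infty$; the Krein-Kotani correspondence, recast through $\varphi_m^1$ and $\psi_m$, then delivers continuity of $c_m^1(\lambda)$ in $m$.

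The main obstacle will be uniform control of $\varphi_m^1, \psi_m$ and the constant $c_m^1(\lambda)$ along the family $(m_\gamma, j_\gamma)_\gamma$. Careful estimates are needed both near $0$, where the modified-Neumann boundary condition is precisely what cancels the singularity of $m$ permitted by \eqref{eq48}, and near $\infty$, where $\mathrm{(M)}_{\alpha, K}$ dictates the leading $\lambda^{1/\alpha}$-asymptotic of $c_m^1(\lambda)$ that ultimately drives the $\lambda^\alpha$-asymptotic of $\chi_{m,j}(\lambda) - b\lambda$. Once this continuity is in hand, evaluating it at the canonical pair $(m^{(\alpha)}, j^{(\alpha)})$ identifies the limiting Laplace exponent with that of $S^{(\alpha)}$, yielding the required pointwise convergence of Laplace exponents. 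Finally, pointwise convergence of Laplace exponents of L\'evy processes implies convergence in distribution on $\bD$ with Skorokhod's $J_1$-topology by standard functional limit theorems for L\'evy processes.
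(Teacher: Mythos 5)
This statement is not proved in the present paper: it is cited verbatim from Yamato and Yano \cite{YamatoYano} (their Theorem 6.4), and the paper only sketches the strategy of that proof in Sections \ref{section: intro} and \ref{subsec: strategy}. Your outline matches that sketch at the architectural level: reduce to pointwise convergence of Laplace exponents, change variables to write the centered exponent as $\chi_{m_\gamma,j_\gamma}(\lambda)-b_\gamma\lambda$, decompose $g_m = \varphi^1_m - c^1_m\psi_m$ (available since \eqref{eq48} is equivalent to $d(m)\le 1$), and appeal to a continuity theorem for $\chi_{m,j}$ in $(m,j)$; the passage from Laplace exponents to $J_1$-convergence on $\bD$ is then standard (cf.\ Proposition \ref{continuityofLT}).

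There is, however, a conceptual slip in the middle that would steer the argument in the wrong direction. You assert that under $\int_0^\infty x\,j(dx)<\infty$ the rescaled pair $(j_\gamma,b_\gamma)$ converges to a canonical stable L\'evy measure $j^{(\alpha)}(dx)\propto x^{-1-\alpha}dx$. That is the mechanism of the large-jump regime (Theorem \ref{Thm:Yano1}), where $j$ itself has a regularly varying tail and rescales to a power law. In the small-jump regime of the statement at hand, $j$ has finite first moment and is not the source of heavy tails: after the spatial rescaling by $c_\gamma=\gamma^{1/\alpha}K(\gamma)$, the measure $j_\gamma$ degenerates towards the origin (compare hypotheses (iii) and (iv) of Theorem \ref{convtoBM2}, where $j_n\xrightarrow{w}0$ on $[1,\infty]$ and only a suitably weighted version of $j_n$ carries a nondegenerate limit). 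The $\lambda^\alpha$-asymptotic of $\chi_{m,j}(\lambda)-b\lambda$ is produced entirely by the speed measure, through the $\mathrm{(M)}_{\alpha,K}$ tail of $m$ and its reflection in the Krein--Kotani Herglotz function, equivalently in the asymptotics of $c^1_m(\lambda)$ --- a point you do make correctly a few lines later, but which contradicts the earlier claim. Building the continuity theorem around a nondegenerate limiting $j^{(\alpha)}$, in the style of Theorem \ref{Thm:Yano1}, rather than around a collapsing $j_\gamma$ together with a nontrivial limiting Herglotz function for $m_\gamma$, would not recover the $\alpha$-stable exponent in this small-jump setting.
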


 \subsection{Outline of the paper}
 The remainder of the present paper is organized as follows:
In Section \ref{section: constofphi}, we construct the eigenfunctions $\varphi^d_m$ for $\frac{d}{dm}\frac{d}{dx}$ subject to the modified Neumann boundary condition of order $d$ at $0$ and establish some basic estimates for them.
In Section \ref{section: contthmofinverselocaltime}, we show a continuity of inverse local times with respect to their speed measures and jumping-in measures.
In Section \ref{section: scalinglimit}, we study the fluctuation scaling limit of inverse local times of jumping-in diffusion and give a proof of Theorem \ref{informal-main-alpha2+}.
In Section \ref{section:examle}, we give examples of our main results.

\subsubsection*{Acknowledgements} 
 The author would like to thank Kouji Yano and Shin'ichi Kotani, who read an early draft of the present paper and gave him valuable comments. Thanks to them, the present paper was significantly improved.

 This work was supported by JSPS KAKENHI Grant Number JP21J11000 and JSPS Open Partnership Joint Research Projects Grant Number JPJSBP120209921 and the Research Institute for Mathematical Sciences, an International Joint Usage/Research Center located in Kyoto University and, was carried out under the ISM Cooperative Research Program (2020-ISMCRP-5013).
 

 \section{Construction of eigenfunctions of the generator}\label{section: constofphi}
  
  We prepare some notation.
  A function $m: (0,\infty) \to (-\infty,\infty)$ is called a {\it string} when $m$ is non-decreasing and right-continuous.
  We denote as $\cM$ the set of strings $m$ which are strictly increasing and satisfy
  	\begin{align}
  		\int_{0+}xdm(x) < \infty. \label{eq178}
  \end{align} 
 Note that the condition \eqref{eq178} means the boundary $0$ for $\frac{d}{dm}\frac{d^+}{dx}$-diffusion is regular or exit in the sense of Feller.

 For $m \in  \cM$, we define
 \begin{align}
 	G_m(x) &= \int_{0}^{x}m(y)dy \quad (x \geq 0) , \label{} \\
 	\tilde{m}(x) &= m(x) - m(1)  \quad (x > 0) , \label{} \\
 	G^1_m(x) &= \int_{0}^{x}\tilde{m}(y)dy \quad (x \geq 0). \label{}
 \end{align}
 For a bounded variation function $U$ on $(0,\infty)$ and for a function $f$ with $\int_{0}^{x}|f||dU| < \infty \ (x > 0)$, we denote
 \begin{align}
 	U\bullet f (x) = \int_{0}^{x}f(y)dU(y), \label{}
 	\end{align}
where $|dU|$ is the total variation measure of the Stieltjes measure $dU$. 

 \begin{Prop}\label{estofpsi}

 	For $m \in \cM$, define 
 	\begin{align}
 	\psi_m(\lambda; x) &:= \sum_{k=0}^{\infty}\lambda^k ((s\bullet m \bullet)^k s)(x) \quad (\lambda \in \bR,\   x \geq 0), \label{eq115} \\
 	g_m(\lambda;x) &:= \psi_m(\lambda;x)\int_{x}^{\infty}\frac{dy}{\psi_m(\lambda;y)^2} \quad  (\lambda \in \bR,\  x \geq 0), \label{eq8}
 	\end{align}
 	where $(s\bullet m \bullet)^1 s = s \bullet m \bullet s$, $(s\bullet m \bullet)^2 s = s \bullet m \bullet s \bullet m \bullet s$, etc. Then the following holds for every $x \geq 0$, $d \geq 0$ and $\lambda \geq 0$:
 	\begin{align}
 	(s\bullet m \bullet)^d s(x) &\leq xE^d_m(0;x), \label{eq120} \\
 	m \bullet (s \bullet m \bullet )^{d-1} s(x) &\leq E^d_m(0;x), \label{eq121} \\
 	0 \leq \psi_m(\lambda;x) - \sum_{k=0}^{d-1}\lambda^k(s\bullet m \bullet)^k s(x) &\leq x |\lambda|^d E^d_m(|\lambda|;x), \label{eq122} \\
 	0 \leq \psi^+_{\lambda}(m;x) - 1 - \sum_{k=0}^{d-1}\lambda^{k+1} m \bullet (s \bullet m \bullet)^k s(x) &\leq |\lambda|^{d+1} E^{d}_m(|\lambda|;x), \label{eq123}
 	\end{align} 
 	where $(m \bullet s)^d(x) = \left( \int_{0}^{x}y dm(y) \right)^d$ and $E^d_m(\lambda;x) = (1/d!)(m \bullet s)^d (x) \mathrm{e}^{\lambda (m \bullet s)(x)}$.
 \end{Prop}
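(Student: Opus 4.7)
My plan is to prove (i) and (ii) together by induction on $d$, then derive (iii) and (iv) by bounding the tails of the defining series via an elementary exponential-series estimate.

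\emph{Induction for (i) and (ii).} The key algebraic observation I would exploit is $z\,dm(z) = d(m\bullet s)(z)$, which causes iterated integrals of the form $\int_0^y z(m\bullet s)^{k}(z)\,dm(z)$ to telescope into $(m\bullet s)^{k+1}(y)/(k+1)$. Unfolding $(s\bullet m\bullet)^d s(x) = \int_0^x dy\int_0^y (s\bullet m\bullet)^{d-1} s(z)\,dm(z)$ and inserting the inductive bound on $(s\bullet m\bullet)^{d-1} s$, this telescoping produces
\[
(s\bullet m\bullet)^d s(x) \leq \int_0^x \frac{(m\bullet s)^d(y)}{d!}\,dy \leq x\, E^d_m(0;x)
\]
by monotonicity of $m\bullet s$. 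The bound (ii) would come from the same telescoping computation one step earlier, without the outer $s\bullet$ prefix, so both (i) and (ii) propagate cleanly in a single induction.

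\emph{Series tails for (iii) and (iv).} By (i), the series $\psi_m(\lambda;x) = \sum_k \lambda^k (s\bullet m\bullet)^k s(x)$ converges absolutely and locally uniformly in $x$, so I can differentiate term by term to obtain $\psi^+_m(\lambda;x) = 1 + \sum_{k \geq 0}\lambda^{k+1}\, m\bullet (s\bullet m\bullet)^k s(x)$. Non-negativity of the residuals in (iii) and (iv) is then immediate for $\lambda \geq 0$ from positivity of each remaining term. The upper bounds would reduce, via term-by-term application of (i) and (ii), to the elementary tail inequality
\[
\sum_{k \geq d}\frac{t^k}{k!} \leq \frac{t^d}{d!}\,e^t \qquad (t \geq 0),
\]
which follows at once from the combinatorial estimate $(d+i)! \geq d!\,i!$.

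\emph{Expected main obstacle.} The trickiest point will be calibrating the tail bound so that the final estimate takes exactly the form stated in the proposition: this comes down to choosing between the factorizations $(d+i)! \geq d!\,i!$ and $(d+1+i)! \geq d!(i+1)!$ when truncating the series for $\psi^+_m$, which determines where the factors of $|\lambda|$ and $(m\bullet s)(x)$ get absorbed in $|\lambda|^{d+1}E^d_m(|\lambda|;x)$. Once that packaging is settled, the remaining bookkeeping of iterated $s\bullet m\bullet$ operations is routine and the four displayed inequalities drop out by direct summation.
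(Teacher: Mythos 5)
The paper does not actually prove Proposition~\ref{estofpsi} --- it cites \cite[Proposition 3.4]{YamatoYano} and omits the argument --- so there is no internal proof to compare against. Your plan (induction for \eqref{eq120}--\eqref{eq121}, then term-by-term series tails for \eqref{eq122}--\eqref{eq123}) is the natural Picard-iteration approach and is the right thing to try. Two points need attention, however.

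\textbf{The telescoping step is wrong when $m$ has atoms.} You claim the identity $\int_0^y z(m\bullet s)^k(z)\,dm(z) = (m\bullet s)^{k+1}(y)/(k+1)$. Strings in $\cM$ are only right-continuous (strictly increasing does not preclude jumps), so $(m\bullet s)$ can have jumps, and for a right-continuous nondecreasing $G$ with $G(0)=0$ one has $\int_0^y G(z)^k\,dG(z) \geq G(y)^{k+1}/(k+1)$ --- the \emph{wrong} direction for your upper bound. The fix is to carry left limits through the induction: observe that $(s\bullet m\bullet)^{k}s(z)$ is continuous, hence bounded by $z\cdot[m\bullet(s\bullet m\bullet)^{k-1}s](z-)$, and use the correct inequality $\int_0^y G(z-)^k\,dG(z) \leq G(y)^{k+1}/(k+1)$. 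With that modification \eqref{eq120}--\eqref{eq121} go through.

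\textbf{The calibration you are worried about in \eqref{eq123} is a real problem, but not one you can resolve by choosing a factorization.} Inserting \eqref{eq121} term by term into the tail $\sum_{k\geq d}\lambda^{k+1}m\bullet(s\bullet m\bullet)^k s(x)$ gives $\sum_{j\geq d+1} t^j/j!$ with $t=|\lambda|(m\bullet s)(x)$, and the cleanest bound is $\frac{t^{d+1}}{(d+1)!}\,\mathrm e^t = |\lambda|^{d+1}E^{d+1}_m(|\lambda|;x)$. Neither of your two candidate factorizations produces $|\lambda|^{d+1}E^{d}_m(|\lambda|;x)$: the first gives $E^{d+1}_m$ and the second gives $|\lambda|^{d}E^d_m$. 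In fact, the bound as stated in \eqref{eq123} appears to be false: take $m(x)=x$, $d=0$, $\lambda=0.01$, $x=2$, so $\psi^+_m(\lambda;x)-1 = \cosh(0.2)-1 \approx 0.0201$ while $|\lambda|E^0_m(|\lambda|;x)=0.01\,\mathrm e^{0.02}\approx 0.0102$. The version with $E^{d+1}_m$ holds in this example ($0.0201 \leq 0.01\cdot 2\cdot\mathrm e^{0.02}\approx 0.0204$) and is also the natural parallel to \eqref{eq122}, whose bound $x|\lambda|^d E^d_m$ matches the leading $\lambda^d$-term. You should prove the $E^{d+1}_m$ version; it is what your argument (correctly executed) establishes, and it is what is actually used downstream.
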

 The proof of this proposition is given in \cite[Proposition 3.4]{YamatoYano}. Hence we omit it.
 \begin{Rem}\label{charofpsi}
 	\begin{enumerate}
 		\item The function $u = \psi_m(\lambda;\cdot)$ is the unique solution of the integral equation:
 		\begin{align}
 		u(x) = x + \lambda \int_{0}^{x}(x-y)u(y)dm(y) \quad \text{for} \  \lambda \in \bR \quad \text{and} \quad  x \in [0,\infty). \label{}
 		\end{align}
 		In other words, the function $u = \psi_{m}(\lambda; \cdot)$ is the unique solution of the ODE $\frac{d}{dm}\frac{d^+}{dx}u = \lambda u$ satisfying the boundary condition $u(0) = 0$ (Dirichlet) and $u^+(0) = 1$.
 		\item The function $u = g_m(\lambda;\cdot)$ is the unique, non-negative and non-increasing solution of the equation $\frac{d}{dm}\frac{d^+}{dx}u = \lambda u$ satisfying the boundary condition $u(0+) = 1$ and $\lim_{x \to \infty}\frac{d^+}{dx}u(x) = 0$.
 		In fact, since we have
 		\begin{align}
 		g_m \psi^+_{m} - g^+_m\psi_m
 		&= g_m\psi^+_{m} - \psi^+_{m}\left(\int_{x}^{\infty}\frac{dy}{\psi_{m}(y)^2}\right)\psi_{m} + 1 = 1, \label{}
 		\end{align}
 		it follows that
 		\begin{align}
 		0 = d(g_m \psi^+_{m} - g^+_m\psi_m) 
 		= g_m d\psi^+_{m} - \psi_{m}dg^+_m 
 		= \lambda g_m \psi_{m}dm - \psi_{m}dg^+_m. \label{} 			
 		\end{align}
 		Thus we obtain
 		\begin{align}
 		\frac{d}{dm}\frac{d^+}{dx}g_m = \lambda g_m \label{}
 		\end{align}
 		(this argument is due to It\^o \cite{Ito:Essentialsof}).
 	\end{enumerate}
 \end{Rem}
 
  \begin{Prop}\label{integrability}
 	Let $m \in \cM$. For $k \geq 2$, the function $G^k_m(x)$ given in \eqref{eq196} is finite for every $x \geq 0$.
 \end{Prop}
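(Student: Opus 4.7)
The plan is to prove, by induction on $k \geq 1$, the stronger statement that $G^k_m$ is continuous on $[0,\infty)$, finite everywhere, and uniformly bounded on $[0,1]$. The key ingredient is the defining condition $\int_{0+}x\,dm(x)<\infty$ of $\cM$, which absorbs the singularity of $m$ at $0$.

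For the base case $k=1$, $G^1_m$ is clearly continuous and finite on $[1,\infty)$ since $\tilde m$ is bounded on compact subintervals there. On $[0,1]$, using $\tilde m \leq 0$ and Fubini--Tonelli,
\[
|G^1_m(x)| = \int_0^x dy\int_y^1 dm(z) = \int_0^x z\,dm(z) + x(m(1) - m(x)),
\]
where the first term is at most $\int_0^1 z\,dm(z) < \infty$, and the second satisfies $x(m(1)-m(x)) \leq \int_x^1 z\,dm(z) \leq \int_0^1 z\,dm(z)$. Thus $|G^1_m| \leq 2\int_0^1 z\,dm(z) =: C_1$ on $[0,1]$, and continuity on $[0,1]$ is immediate from absolute continuity.

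For the inductive step, suppose $G^{k-1}_m$ is continuous and finite on $[0,\infty)$, with $|G^{k-1}_m| \leq C_{k-1}$ on $[0,1]$. For $x \in [0,1]$, since $[y,1] \subset [0,1]$ whenever $y \leq x$,
\[
|G^k_m(x)| \leq C_{k-1}\int_0^x dy\int_y^1 dm(z) = C_{k-1}|G^1_m(x)| \leq C_{k-1}C_1,
\]
giving the uniform bound on $[0,1]$. For $x > 1$, using $\int_y^1 = -\int_1^y$ when $y \geq 1$,
\[
G^k_m(x) = G^k_m(1) + \int_1^x dy\int_1^y G^{k-1}_m(z)\,dm(z),
\]
which is finite because $G^{k-1}_m$ is locally bounded (by continuity) and $dm$ is locally finite on $(0,\infty)$. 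Continuity of $G^k_m$ on $[0,\infty)$ follows because it is the antiderivative of a locally integrable function: on $[0,1]$ the integrand $y \mapsto \int_y^1 G^{k-1}_m(z)\,dm(z)$ is bounded in absolute value by $C_{k-1}(m(1)-m(y))$, which is integrable on $[0,1]$ by the base case, and on $[1,\infty)$ it is locally bounded. This closes the induction.

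The only (mild) obstacle is controlling the singularity of $m$ at $0$; this is dispatched entirely in the base case via the $\cM$-condition $\int_{0+}x\,dm(x)<\infty$, after which the remaining steps reduce to routine iteration of locally uniform bounds.
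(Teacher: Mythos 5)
Your proof is correct and a bit more streamlined than the paper's. Both proofs reduce to controlling $G^k_m$ on $[0,1]$ and proceed by induction, but where you differ is in the mechanism of the inductive step. The paper integrates by parts inside $\int_y^1 G^k_m(z)\,dm(z)$ to rewrite it in terms of $\tilde m$ and $(G^k_m)^+$, and then uses the (simultaneously maintained) inductive hypothesis that $(-1)^kG^k_m$ is non-decreasing on $[0,1]$ to arrive at the bound $(-1)^{k+1}G^{k+1}_m(x) \leq 2(-1)^{k+1}G^k_m(1)G^1_m(x)$. You instead bound the inner integrand $|G^{k-1}_m(z)|$ by its sup $C_{k-1}$ on $[0,1]$ and observe that what remains is exactly $|G^1_m(x)|$, giving $|G^k_m(x)| \leq C_{k-1}C_1$ directly via Tonelli without any integration by parts or monotonicity. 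What your version buys is a cleaner separation of concerns: all the work with the singularity at $0$ is isolated in the base case computation $\int_0^x dy\int_y^1 dm(z) = \int_0^x z\,dm(z) + x(m(1)-m(x)) \leq 2\int_0^1 z\,dm(z)$, and the inductive step is pure iteration; the paper's version has the side benefit of re-deriving the monotonicity of $(-1)^kG^k_m$, but that fact is established independently anyway in Proposition~\ref{Iest}, so nothing is lost by your route. The extension to $x>1$ via $G^k_m(x) = G^k_m(1) + \int_1^x dy\int_1^y G^{k-1}_m(z)\,dm(z)$, with local finiteness of $dm$ on $(0,\infty)$, matches the paper's opening remark that finiteness at one point propagates to all points.
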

 \begin{proof}
 	For every $k \geq 1$, it is not difficult to check that if $|G^k_m(x)| < \infty$ for some $x > 0$, then $|G^k_m(x)| < \infty$ for every $x > 0$. Therefore it is enough to show the following: for every $k \geq 1$, 
 	\begin{align}
 		(-1)^kG^k_m(x) \ \text{is finite and non-decreasing on} \  [0,1]. \label{eq65}
 	\end{align}
	The assertion \eqref{eq65} obviously holds for $k=1$ from the definition of $\cM$. Assume \eqref{eq65} holds for an integer $k \geq 1$. 
 	By integrating by parts, for $x \in [0,1]$, we have
 	\begin{align}
 		(-1)^{k+1}G^{k+1}_m(x) &= (-1)^k\int_{0}^{x}dy\int_{y}^{1}G^k_m(z)dm(z) \label{}\\
 		&= (-1)^{k+1}\int_{0}^{x}dy\left( G^k_m(y)\tilde{m}(y)  + \int_{y}^{1}(G^k_m)^+(z)\tilde{m}(z)dz  \right) \label{}\\
 		&\leq (-1)^{k+1}G^k_m(1)G^1_m(x) + (-1)^{k+1} \int_{0}^{x}\tilde{m}(y)dy\int_{y}^{1}(G^k_m)^+(z)dz \label{} \\
 		&\leq 2(-1)^{k+1}G^k_m(1)G^1_m(x) < \infty. \label{}
 	\end{align}
 	Therefore by induction, we obtain the desired result.
 \end{proof}
 We introduce a subset of $\cM$ as follows.
 \begin{Def}
  	For $m \in \cM_0 := \{ m \in \cM \mid \lim_{x \to +0}m(x) > -\infty \}$ we define $d(m) = 0$. For $m \in \cM \setminus \cM_0$, we define
 	\begin{align}
 		d(m) = \inf\left\{k \geq 1 \;\middle|\; \int_{0}^{1}(-1)^kG^k_m(x)dm(x) < \infty \right\},
 	\end{align}
 	where $\inf \emptyset = \infty$.
 \end{Def}
 Here we introduce the $\lambda$-eigenfunction announced in the beginning of this section.
 \begin{Def}
	For $m \in \cM$ with $d(m) < \infty$, $d \geq d(m)\vee 1$ and $x \geq 0$, we define
	\begin{align}
		\varphi^d_m(\lambda;x) = 1 + \sum_{k=1}^{d}\lambda^{k} G^{k}_m(x) + \sum_{k=1}^{\infty}\lambda^{d+k}(s\bullet m \bullet)^k G^{d}_m(x). \label{eq9}
	\end{align} 
 \end{Def}
 The convergence of the summation in RHS of \eqref{eq9} follows from the following two propositions. Since the proof is almost the same as Proposition \ref{estofpsi}, we omit it.
  
 \begin{Prop}\label{estofphi^d}
 	Let $m \in \cM$ with $d(m) < \infty$. Then for any $d \geq d(m)$ and $k \geq 1$, the following holds:
 	\begin{align}
 	|(s\bullet m \bullet)^k G^{d}_m(x)| &\leq 
 	xS^d_m(x)E^{k-1}_m(0;x), \label{}\\
 	|(m\bullet s \bullet)^{k-1} m \bullet G^{d}_m(x)| &\leq 
 	S^d_m(x)E^{k-1}_m(0;x), \label{} \\
 	\left|\varphi^d_m(\lambda;x) - 1 - \sum_{k=1}^{d}\lambda^{k} G^{k}_m(x)\right| &\leq 
 	x\lambda^{d+1}S^d_m(x) E^0_m(\lambda;x), \label{eq183} \\
 	\left|(\varphi^d_m)^+(\lambda;x) - \lambda \tilde{m}(x) - \sum_{k=2}^{d}\lambda^{k}\int_{x}^{1} G^{k-1}_m(y)dm(y)\right| &\leq 
 	S^d_m(x) \lambda^{d+1} E^0_m(\lambda;x), \label{}
 	\end{align} 
 	where $S^d_m(x) = \sup_{y \in [0,x]}|m \bullet G^d_m(y)|$.
 \end{Prop}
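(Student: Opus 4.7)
The plan is to proceed by simultaneous induction on $k \geq 1$, replacing the role of $s(x) = x$ in the proof of Proposition \ref{estofpsi} by $G^d_m(x)$. Introduce the shorthand $\alpha_k := (s \bullet m \bullet)^k G^d_m$ and $\beta_k := (m \bullet s \bullet)^{k-1} m \bullet G^d_m$, so that $\beta_1 = m \bullet G^d_m$ and the recursions $\alpha_k = s \bullet \beta_k$ and $\beta_{k+1} = m \bullet \alpha_k$ hold.

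First I would prove by induction the two pointwise estimates on $\alpha_k$ and $\beta_k$. The base case $k=1$ is immediate from the definition of $S^d_m$ and $|\alpha_1(x)| \leq \int_0^x |\beta_1(y)|\,dy \leq x S^d_m(x)$. For the inductive step, the identity $\int_0^x y E^{k-1}_m(0;y)\,dm(y) = E^k_m(0;x)$, which follows from $d(m \bullet s)(z) = z\,dm(z)$ and $E^k_m(0;y) = \frac{1}{k!}(m \bullet s)(y)^k$, together with monotonicity of $S^d_m(\cdot)$ and $E^k_m(0;\cdot)$, gives $|\beta_{k+1}(x)| \leq S^d_m(x) E^k_m(0;x)$ from $\beta_{k+1} = m \bullet \alpha_k$, and then $|\alpha_{k+1}(x)| \leq x S^d_m(x) E^k_m(0;x)$ from $\alpha_{k+1} = s \bullet \beta_{k+1}$.

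The series bound \eqref{eq183} and the derivative bound then follow by summing geometric-type series: for instance,
\begin{align*}
\sum_{k=1}^{\infty} \lambda^{d+k} |\alpha_k(x)| \leq x \lambda^{d+1} S^d_m(x) \sum_{k=0}^{\infty} \frac{(\lambda (m \bullet s)(x))^k}{k!} = x \lambda^{d+1} S^d_m(x) E^0_m(\lambda;x),
\end{align*}
which simultaneously proves absolute convergence of the series defining $\varphi^d_m$ in \eqref{eq9}. The derivative computation uses $\alpha_k^+ = \beta_k$ (from $\alpha_k = s \bullet \beta_k$) together with the explicit formulas $(G^1_m)^+ = \tilde{m}$ and $(G^k_m)^+(x) = -\int_x^1 G^{k-1}_m(z)\,dm(z)$ for $k \geq 2$, which come directly from differentiating \eqref{eq196}.

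The only mild obstacle is verifying $S^d_m(x) < \infty$. The hypothesis $d \geq d(m) \vee 1$ guarantees $\int_0^1 (-1)^d G^d_m(z)\,dm(z) < \infty$, which combined with Proposition \ref{integrability} and the one-signedness and monotonicity of $(-1)^d G^d_m$ on $[0,1]$ (established in the proof of Proposition \ref{integrability}) yields boundedness of $m \bullet G^d_m$ on $[0,1]$; the extension to $x > 1$ follows from local finiteness of $dm$ and finiteness of $G^d_m$ on compact subsets of $(0,\infty)$.
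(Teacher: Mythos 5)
Your overall plan is the one the paper intends: the paper defers with ``Since the proof is almost the same as Proposition~\ref{estofpsi}, we omit it,'' and Proposition~\ref{estofpsi} is proved in \cite[Proposition 3.4]{YamatoYano} by exactly the double induction you set up, with $G^d_m$ playing the role of $s$ in the seed. Your recursion $\alpha_k = s\bullet\beta_k$, $\beta_{k+1}=m\bullet\alpha_k$, the base case, the exponential-series summation giving \eqref{eq183}, the identifications $\alpha_k^+=\beta_k$ and $(G^k_m)^+(x)=-\int_x^1 G^{k-1}_m\,dm$, and the finiteness of $S^d_m$ via Proposition~\ref{integrability} and the one-signedness of $(-1)^dG^d_m$ are all correct and are the right ingredients.

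However, the inductive step as you have written it contains a genuine gap. The claimed identity $\int_0^x y\,E^{k-1}_m(0;y)\,dm(y) = E^k_m(0;x)$ is an equality only if $m$ is continuous. Writing $F=m\bullet s$, it asserts $\int_0^x F(y)^{k-1}\,dF(y)=F(x)^k/k$; but strings in $\cM$ are only right-continuous and strictly increasing, so $F$ may have atoms, and then convexity of $t\mapsto t^k$ gives the inequality the \emph{wrong} way: $\int_0^x F(y)^{k-1}\,dF(y)\geq F(x)^k/k$. So feeding the pointwise bound $|\alpha_k(y)|\leq y\,S^d_m(y)E^{k-1}_m(0;y)$ directly into the $dm$-integral does not close the induction. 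The fix is to postpone the pointwise bound by one step: use $|\alpha_k(y)|\leq\int_0^y|\beta_k(z)|\,dz$, bound $|\beta_k(z)|\leq S^d_m(x)E^{k-1}_m(0;z)$ for $z\le y\le x$, and note that the Lebesgue integral over $z\in(0,y)$ only sees $F(z)\leq F(y-)$, so
\begin{align}
|\beta_{k+1}(x)|\leq \frac{S^d_m(x)}{(k-1)!}\int_0^x F(y-)^{k-1}\,dF(y)\leq \frac{S^d_m(x)}{(k-1)!}\cdot\frac{F(x)^k}{k}=S^d_m(x)E^k_m(0;x),
\end{align}
where the last inequality is again convexity of $t\mapsto t^k$, now applied with the left limit and hence in the favorable direction. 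With this left-limit correction the induction closes and the rest of your argument goes through unchanged.
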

 We have the following characterization of $\varphi^d_m$. The proof is immediate from Proposition \ref{estofphi^d}. Hence we omit it.
 \begin{Prop} \label{rem1}
 	For $m \in \cM$ with $d(m) < \infty$ and for $d \geq d(m)\vee 1$, the following holds: 
	\begin{enumerate}
		\item The summation in \eqref{eq9} converges uniformly on every compact subset of $[0,\infty)$.
		\item \label{integralequation} The function $u = \varphi^d_m(\lambda;\cdot) - (1+\sum_{k=1}^{d-1}\lambda^k G^k_m(\cdot))$ is the unique solution of the integral equation
		\begin{align}
			u(x) = \lambda^{d}G^{d}_m(x) + \lambda\int_{0}^{x}(x-y)u(y)dm(y). \label{eq36}
		\end{align}
		Equivalently, the function $u=\varphi^d_m(x;\cdot)$ is the unique solution of the equation $\frac{d}{dm}\frac{d^+}{dx}u = \lambda u$ for $\lambda > 0$ with the boundary condition:
		\begin{align}
			u(0) = 1, \quad \lim_{x \to +0} \left(u^+(x) - \left(\lambda m(x) + \sum_{k=1}^{d-1}\lambda^k \int_{x}^{1}G^k_m(y)dm(y)\right)\right) = 0. \label{}
		\end{align}
		We call this boundary condition the modified Neumann boundary condition.
	\end{enumerate}
 \end{Prop}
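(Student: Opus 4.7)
For (i), apply Proposition~\ref{estofphi^d} to bound the $k$-th summand of the tail series in \eqref{eq9}: $|(s\bullet m \bullet)^k G^d_m(x)| \leq x\, S^d_m(x)\,(m\bullet s)^{k-1}(x)/(k-1)!$. On any compact $[0,R]$, the tail $\sum_{k\geq 1}|\lambda|^{d+k}\,|(s\bullet m\bullet)^k G^d_m(x)|$ is then dominated by $|\lambda|^{d+1} R\, S^d_m(R)\,\exp(|\lambda|(m\bullet s)(R))$, a finite quantity since $S^d_m(R)$ and $(m\bullet s)(R)$ are finite. Thus the series defining $\varphi^d_m$ converges uniformly on $[0,R]$, and one may differentiate termwise under the integrations carried out below.

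For the integral equation in (ii), verify \eqref{eq36} by direct substitution. Set $u := \varphi^d_m - 1 - \sum_{k=1}^{d-1}\lambda^k G^k_m$, so that $u = \lambda^d G^d_m + \sum_{k=1}^\infty \lambda^{d+k}(s\bullet m\bullet)^k G^d_m$. Applying the operator $\lambda\, s\bullet m\bullet$ term by term, uniform convergence from (i) lets one re-index $k \mapsto k+1$ in the resulting series to obtain $\lambda\, s\bullet m\bullet u = u - \lambda^d G^d_m$, which is precisely \eqref{eq36}. Uniqueness follows by a Picard-type argument: the difference $w$ of two solutions of \eqref{eq36} satisfies $w = (\lambda\, s\bullet m\bullet)^n w$ for every $n$, and iterating the kernel bound (as in Proposition~\ref{estofphi^d}, now applied with $w$ in place of $G^d_m$) forces $w \equiv 0$ on every compact set.

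For the equivalence with $\frac{d}{dm}\frac{d^+}{dx}u = \lambda u$ together with the modified Neumann condition, differentiate \eqref{eq36}. Since $s\bullet m\bullet u(x) = \int_0^x (x-y)u(y)\,dm(y)$, its right-derivative is $\int_0^x u\,dm$; hence $u^+(x) = \lambda^d (G^d_m)^+(x) + \lambda\int_0^x u\,dm$, and a further application of $\frac{d}{dm}$ yields $\frac{d}{dm}\frac{d^+}{dx}u = \lambda^d G^{d-1}_m + \lambda u$, using $\frac{d}{dm}(G^k_m)^+ = G^{k-1}_m$ for $k \geq 2$ and $\frac{d}{dm}(G^1_m)^+ = 1$. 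Combining with the same computation applied to the polynomial correction $1 + \sum_{k=1}^{d-1}\lambda^k G^k_m$, a telescoping identity produces $\frac{d}{dm}\frac{d^+}{dx}\varphi^d_m = \lambda\varphi^d_m$. The value $\varphi^d_m(0)=1$ is immediate from $G^k_m(0)=0$, and the modified Neumann condition follows by substituting the explicit expression for $u^+$ into $(\varphi^d_m)^+(x) = \sum_{k=1}^{d-1}\lambda^k (G^k_m)^+(x) + u^+(x)$ and letting $x \to 0^+$, invoking $\int_0^x u\,dm \to 0$ — valid precisely because $d \geq d(m)$ makes $\int_0^1 |G^d_m|\,dm < \infty$ while $u$ is continuous with $u(0) = 0$. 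Uniqueness of the ODE-plus-boundary-condition formulation then reduces to the uniqueness of \eqref{eq36} via the standard inversion of the Krein-string ODE into its Volterra integral form.

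The main obstacle will be the boundary-condition analysis at $x = 0$ in the critical case $d = d(m)$: several of the individual integrals $\int_0^1 |G^k_m|\,dm$ for $k < d$ diverge, so the pointwise formula for $(\varphi^d_m)^+(x)$ must be rearranged so that only integrable combinations survive the limit $x\to 0^+$. The telescoping of the derivatives $(G^k_m)^+$ must then be matched exactly against the correction terms $\sum_{k=1}^{d-1}\lambda^k \int_x^1 G^k_m\,dm$ in the modified Neumann condition; the finiteness of $d(m)$ is what makes this bookkeeping close.
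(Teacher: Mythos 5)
Your proof proposal is essentially correct and matches the approach the paper implies: the paper explicitly omits this proof, saying only that it is ``immediate from Proposition~\ref{estofphi^d},'' and the details you supply — the majorant from Proposition~\ref{estofphi^d} for uniform convergence, the termwise application of $\lambda\, s\bullet m\bullet$ and the re-indexing $k\mapsto k+1$ to verify \eqref{eq36}, the rule $\frac{d}{dm}(G^k_m)^+ = G^{k-1}_m$ for $k\geq 2$ to recover the ODE, and a Picard-iteration uniqueness argument — are precisely what that shorthand stands for.

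One caution worth spelling out when you actually execute the boundary-condition bookkeeping you flag at the end. Substituting $(G^1_m)^+(x)=\tilde m(x)=m(x)-m(1)$ and $(G^k_m)^+(x)=-\int_x^1 G^{k-1}_m\,dm$ for $k\geq 2$ into $(\varphi^d_m)^+(x)=\sum_{k=1}^{d}\lambda^k(G^k_m)^+(x)+\lambda\int_0^x u\,dm$, the quantity that tends to $0$ is
\[
(\varphi^d_m)^+(x)\;-\;\Bigl(\lambda\tilde m(x)\;-\;\sum_{k=2}^{d}\lambda^{k}\int_x^1 G^{k-1}_m\,dm\Bigr)\;=\;\lambda\int_0^x u\,dm,
\]
and after re-indexing $j=k-1$ the subtracted term reads $\lambda\tilde m(x)-\sum_{j=1}^{d-1}\lambda^{j+1}\int_x^1 G^{j}_m\,dm$. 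This does not literally coincide with the expression $\lambda m(x)+\sum_{k=1}^{d-1}\lambda^{k}\int_x^1 G^{k}_m\,dm$ printed in Proposition~\ref{rem1} (and in the earlier Definition): the constant $-\lambda m(1)$ and the power of $\lambda$ in each summand differ, so the two purported boundary conditions genuinely disagree whenever some $\int_0^1|G^k_m|\,dm=\infty$ for $k<d$. The form you derive is the one consistent with the estimate in Proposition~\ref{estofphi^d} (up to the sign of the sum there, which also appears miswritten). You should therefore write out the boundary condition explicitly in the derived form rather than try to match the printed one literally; the finiteness of $\int_0^1 G^d_m\,dm$ for $d\geq d(m)$, which you correctly invoke, is what makes $\lambda\int_0^x u\,dm\to 0$ and closes the argument.
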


 Since $\varphi^d_m(\lambda;\cdot)$ and $\psi_m(\lambda;\cdot)$ are linearly independent solutions of the equation $\frac{d}{dm}\frac{d^+}{dx}u = \lambda u$ for $d \geq d(m)$, the function $g_m(\lambda;\cdot)$ can be represented as 
 \begin{align}
 g_m(\lambda;\cdot) = \varphi^d_m(\lambda;\cdot) - c^d_m(\lambda)\psi_m(\lambda;\cdot) \label{defofg}
 \end{align} 
 by a constant $c^d_\lambda(m)$. From a simple calculation, we can prove
 	\begin{align}
 	c^{d+1}_m(\lambda) = c^d_m(\lambda) - \lambda^{d+1} \int_{0}^{1}G^d_m(x)dm(x). \label{relofcd}
 	\end{align}
 	
 We prepare some estimates for $G^k_m$.
 
 \begin{Prop} \label{Iest}
 	Let $m \in \cM$ and $k\geq 1$. Then $(-1)^{k}G^{k}_m(x)$ is non-decreasing and concave on $[0,1]$. In particular, for $0 < x < y \leq 1$, it holds that
 	\begin{align}
 	\frac{(-1)^{k}G^{k}_m(y)}{y} \leq \frac{(-1)^{k}G^{k}_m(x)}{x}. \label{}
 	\end{align}
 \end{Prop}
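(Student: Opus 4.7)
The plan is to prove by induction on $k$ that $(-1)^k G^k_m$ is non-negative, non-decreasing, and concave on $[0,1]$, with value $0$ at $x=0$. Once this is established, the ``in particular'' inequality follows at once: any non-negative concave function $f$ on $[0,1]$ with $f(0)=0$ satisfies $f(y)/y \leq f(x)/x$ for $0 < x < y \leq 1$, because writing $x = (x/y)\cdot y + (1 - x/y)\cdot 0$ and applying concavity gives $f(x) \geq (x/y) f(y)$.

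For the base case $k=1$, I compute directly: $-G^1_m(x) = \int_{0}^{x}(m(1) - m(y))\,dy$ on $[0,1]$ has right derivative $m(1) - m(x) \geq 0$, which is itself non-increasing in $x$ since $m$ is non-decreasing. Hence $-G^1_m$ is non-negative, non-decreasing and concave on $[0,1]$ and vanishes at $0$.

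For the inductive step, a sign count applied to the recursion \eqref{eq196} (multiplying both sides by $(-1)^k$ and noting $(-1)^{k+1} = (-1)^{k-1}$) gives
\begin{align}
(-1)^k G^k_m(x) = \int_{0}^{x} dy \int_{y}^{1} (-1)^{k-1} G^{k-1}_m(z)\,dm(z).
\end{align}
By the inductive hypothesis, $(-1)^{k-1} G^{k-1}_m \geq 0$ on $[0,1]$, so
\begin{align}
h(y) := \int_{y}^{1} (-1)^{k-1} G^{k-1}_m(z)\,dm(z)
\end{align}
is non-negative and non-increasing in $y$, and finite for each $y \in (0,1]$ (since $(-1)^{k-1} G^{k-1}_m$ is bounded on $[y,1]$ and $m(1) - m(y) < \infty$). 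Therefore $(-1)^k G^k_m(x) = \int_{0}^{x} h(y)\,dy$ is non-decreasing on $[0,1]$ with non-increasing right derivative, i.e., concave; it vanishes at $0$, closing the induction.

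I foresee no real obstacle. The only delicate point is that $h(y)$ may diverge as $y \to 0+$ when $k-1 < d(m)$, but Proposition \ref{integrability} guarantees that $\int_0^x h(y)\,dy$ is finite for every $x \in [0,1]$, and concavity on $[0,1]$ is unaffected by an infinite slope at the left endpoint.
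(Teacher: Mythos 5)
Your proof is correct and follows essentially the same route as the paper's: induction on $k$, with the base case read off from $-G^1_m(x)=\int_0^x(m(1)-m(y))\,dy$ and the inductive step obtained by rewriting the recursion so that $(-1)^kG^k_m$ is the integral of a non-negative, non-increasing function. Your extra remarks — spelling out that the concavity/$f(0)=0$ pair yields $f(y)/y\le f(x)/x$, and that a possible infinite slope at $0$ is harmless thanks to Proposition \ref{integrability} — are good bookkeeping but do not change the argument.
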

 \begin{proof}
 	We prove by induction. First we prove the case: $k=1$.
 	Since $-\tilde{m}$ is non-increasing and non-negative on $(0,1]$, the function $-G^1_m$ is concave and non-decreasing on $[0,1]$. Next we assume that for an integer $k \in \bN$ the assertion holds. By the equality 
 	\begin{align}
 	(-1)^{k+1}G^{k+1}_m(x) = \int_{0}^{x}dy \int_{y}^{1}(-1)^{k}G^{k}_m(z)dm(z) \label{}
 	\end{align} and the induction hypothesis, the function $\int_{x}^{1}(-1)^{k}G^{k}_m(z)dm(z)$ is non-increasing for $x \in [0,1]$. Since $(-1)^kG^k_m(0) = 0$, the function $(-1)^{k+1}G^{k}_m$ is non-negative on $[0,1]$. Hence $(-1)^{k+1}G^{k+1}_m$ is concave and non-decreasing on $[0,1]$.
 \end{proof}
 \begin{Prop}\label{divofM}
 	Let $m \in \cM$. Then for every $1 \leq k \leq d(m)$,
 	\begin{align}
 	\lim_{x \to +0}\frac{(-1)^{k}G^{k}_m(x)}{x} = \infty. \label{}
 	\end{align}
 \end{Prop}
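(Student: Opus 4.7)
The plan is to handle the case $k=1$ and the case $k \geq 2$ simultaneously, by observing that in either situation one has a representation
\[
(-1)^k G^k_m(x) \;=\; \int_0^x F_k(y)\,dy
\]
for some function $F_k$ that is non-negative and non-increasing on $(0,1]$ with $F_k(0+) = +\infty$. Granted this, the conclusion is immediate from
\[
\frac{(-1)^k G^k_m(x)}{x} \;=\; \frac{1}{x}\int_0^x F_k(y)\,dy \;\geq\; F_k(x) \;\longrightarrow\; \infty \quad (x \to 0+),
\]
where the inequality uses monotonicity of $F_k$.

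For $k=1$, set $F_1(y) := m(1) - m(y) = -\tilde{m}(y)$. Monotonicity of the string $m$ makes $F_1$ non-negative and non-increasing on $(0,1]$, the identity $-G^1_m(x) = \int_0^x F_1(y)\,dy$ is the definition of $G^1_m$, and the hypothesis $d(m) \geq 1$ says precisely that $m \notin \cM_0$, i.e.\ $m(0+) = -\infty$, so $F_1(y) \to \infty$ as $y \to 0+$.

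For $k \geq 2$ with $k \leq d(m)$, set
\[
F_k(y) := \int_y^1 (-1)^{k-1} G^{k-1}_m(z)\,dm(z).
\]
Proposition \ref{Iest} guarantees $(-1)^{k-1}G^{k-1}_m \geq 0$ on $[0,1]$, so $F_k$ is non-negative and non-increasing in $y$. Since $1 \leq k-1 < d(m)$, the defining property of $d(m)$ forces
\[
\lim_{y \to 0+} F_k(y) \;=\; \int_0^1 (-1)^{k-1} G^{k-1}_m(z)\,dm(z) \;=\; +\infty.
\]
The representation $(-1)^k G^k_m(x) = \int_0^x F_k(y)\,dy$ follows from the recursion \eqref{eq196} after cancelling the sign $(-1)^{k+1} = (-1)^{k-1}$.

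There is essentially no obstacle: the content is entirely encapsulated in the sign and monotonicity properties of $(-1)^k G^k_m$ supplied by Proposition \ref{Iest} and in the very definition of $d(m)$. Notably no induction is required — the statement for each admissible $k$ only calls upon level $k-1$ through its sign and through the divergence $\int_0^1 (-1)^{k-1} G^{k-1}_m\,dm = \infty$, both of which are already in hand.
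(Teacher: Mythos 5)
Your proof is correct and, at bottom, uses the same mechanism as the paper's: both arrive at the key lower bound $\frac{(-1)^k G^k_m(x)}{x}\geq \int_x^1(-1)^{k-1}G^{k-1}_m(z)\,dm(z)$ and then let $x\to 0+$, where divergence follows from $k-1<d(m)$. The paper reaches this bound by Fubini's theorem (splitting into $\int_0^x z(-1)^{k-1}G^{k-1}_m\,dm + x\int_x^1(-1)^{k-1}G^{k-1}_m\,dm$) and then discarding the non-negative first term, whereas you obtain it directly from the monotone-average inequality $\frac{1}{x}\int_0^x F_k \geq F_k(x)$ applied to the non-increasing inner integral $F_k$. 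The content and inputs (Proposition \ref{Iest} for signs, the definition of $d(m)$ for divergence) are identical; your version is a slight cosmetic reorganization that also has the merit of explicitly splitting off and treating the $k=1$ case, which the paper handles only implicitly via the convention $G^0_m\equiv 1$.
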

 \begin{proof}
 	Take $1 \leq k \leq d(m)$ and $x \in [0,1]$. From Fubini's theorem, we have 
 	\begin{align}
 	(-1)^{k}G^{k}_m(x) &= \int_{0}^{x}z(-1)^{k-1}G^{k-1}_m(z) dm(z) + x \int_{x}^{1}(-1)^{k-1}G^{k-1}_m(z) dm(z) \label{}\\
 	&\geq x \int_{x}^{1}(-1)^{k-1}G^{k-1}_m(z) dm(z). \label{}
 	\end{align}
 	Then it follows that 
 	\begin{align}
 	\liminf_{x \to 0}\frac{(-1)^kG^{k}_m(x)}{x} \geq \int_{0}^{1}(-1)^{k-1}G^{k-1}_m(z) dm(z) = \infty .\label{}
 	\end{align}
 \end{proof}
 
 \begin{Prop} \label{divofI}
 	Let $m \in \cM$. Then the following holds: 
 	\begin{enumerate}
 		\item For $k \geq 1$ and $0 \leq x \leq y \leq 1$,
 		\begin{align}
 		0 \leq (-1)^{k+1}G^{k+1}_m(x) \leq (-1)^{k}G^{k}_m(x)\int_{0}^{y}zdm(z) + x\int_{y}^{1}(-1)^{k}G^{k}_m(z) dm(z). \label{eq6}
 		\end{align}
 		In particular, 
 		\begin{align}
 		0 \leq (-1)^{k+1}G^{k+1}_m(x) \leq (-1)^{k}G^{k}_m(x)\int_{0}^{1}zdm(z). \label{eq181}
 		\end{align}
 		\item For $1 \leq k \leq d(m)$,
 		\begin{align}
 		\lim_{x \to 0} \frac{G^{k+1}_m(x)}{G^{k}_m(x)} = 0. \label{eq4}
 		\end{align}
 		In particular, for $1 \leq k \leq d(m)$,
 		\begin{align}
 		\lim_{x \to 0}\frac{G^{k+2}_m(x)}{G^2_m(x)}= 0. \label{eq188}
 		\end{align}
 	\end{enumerate}
 \end{Prop}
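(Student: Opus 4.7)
The plan is to derive (i) via Fubini's theorem combined with the two monotonicity statements of Proposition~\ref{Iest}, and then to obtain (ii) by dividing the bound of (i) through by $(-1)^k G^k_m(x)$, invoking Proposition~\ref{divofM} together with the integrability $\int_{0+}z\,dm(z)<\infty$ built into the definition of $\cM$ (see \eqref{eq178}).

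For (i), I first rewrite the defining double integral by Fubini as
\begin{align*}
(-1)^{k+1}G^{k+1}_m(x) = \int_{0}^{x}dy\int_{y}^{1}(-1)^{k}G^{k}_m(z)\,dm(z) = \int_{0}^{1}(-1)^{k}G^{k}_m(z)\,(x\wedge z)\,dm(z),
\end{align*}
which is legal since $(-1)^kG^k_m\ge 0$ on $[0,1]$ by Proposition~\ref{Iest}; this also yields the non-negativity. I then split the outer integral into the three pieces $[0,x]$, $[x,y]$ and $[y,1]$. On $[0,x]$ one has $x\wedge z=z$, and monotonicity of $(-1)^kG^k_m$ on $[0,1]$ gives $\int_0^x z(-1)^kG^k_m(z)\,dm(z)\le (-1)^kG^k_m(x)\int_0^x z\,dm(z)$. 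On $[x,y]$ one has $x\wedge z=x$, and the non-increasingness of $z\mapsto (-1)^kG^k_m(z)/z$ from Proposition~\ref{Iest} yields $x(-1)^kG^k_m(z)\le z(-1)^kG^k_m(x)$, hence $x\int_x^y(-1)^kG^k_m(z)\,dm(z)\le (-1)^kG^k_m(x)\int_x^y z\,dm(z)$. Summing these two contributions produces $(-1)^kG^k_m(x)\int_0^y z\,dm(z)$, while on $[y,1]$ the factor $x\wedge z=x$ supplies the remaining term $x\int_y^1(-1)^kG^k_m(z)\,dm(z)$. The specialization $y=1$ gives \eqref{eq181}.

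For (ii), I divide \eqref{eq6} through by $(-1)^kG^k_m(x)$ to obtain
\begin{align*}
0\le\frac{(-1)^{k+1}G^{k+1}_m(x)}{(-1)^kG^k_m(x)}\le\int_0^y z\,dm(z)+\frac{x}{(-1)^kG^k_m(x)}\int_y^1(-1)^kG^k_m(z)\,dm(z).
\end{align*}
For $1\le k\le d(m)$, Proposition~\ref{divofM} gives $x/((-1)^kG^k_m(x))\to 0$ as $x\to 0$, and the inner integral is finite for each fixed $y>0$ since $(-1)^kG^k_m$ is bounded on $[y,1]$ and $m(y,1]<\infty$. Hence the second term vanishes as $x\to 0$ with $y$ fixed, yielding $\limsup_{x\to 0}(-1)^{k+1}G^{k+1}_m(x)/((-1)^kG^k_m(x))\le\int_0^y z\,dm(z)$; letting $y\to 0$ and using \eqref{eq178} proves \eqref{eq4}. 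For the consequence \eqref{eq188} I telescope
\begin{align*}
\frac{G^{k+2}_m(x)}{G^2_m(x)}=\prod_{j=2}^{k+1}\frac{G^{j+1}_m(x)}{G^j_m(x)},
\end{align*}
where each factor stays bounded thanks to \eqref{eq181} and at least one factor (with index $j\le d(m)$) tends to zero by \eqref{eq4}, so the product vanishes.

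The main obstacle I anticipate is executing the three-piece decomposition in the proof of (i) while correctly combining the two different forms of Proposition~\ref{Iest} (plain monotonicity on $[0,x]$ versus the ratio bound $(-1)^kG^k_m(z)/z$ non-increasing on $[x,y]$); once that bookkeeping is settled, (ii) and the telescoping step for \eqref{eq188} follow directly from (i), Proposition~\ref{divofM}, and the defining integrability \eqref{eq178} of the class $\cM$.
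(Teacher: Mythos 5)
Your argument for (i) — Fubini rewriting as $\int_0^1 (-1)^kG^k_m(z)(x\wedge z)\,dm(z)$, a three-piece split at $x$ and $y$, monotonicity of $(-1)^kG^k_m$ on $[0,x]$, and the ratio bound $(-1)^kG^k_m(z)/z$ non-increasing on $[x,y]$ — is exactly the paper's decomposition, and your derivation of \eqref{eq4} from \eqref{eq6} via Proposition~\ref{divofM} and sending $y\to 0$ is also the paper's proof verbatim. The only difference is that the paper leaves \eqref{eq188} as an unstated ``in particular'' consequence, whereas you supply the natural telescoping justification using \eqref{eq181} for boundedness of factors and \eqref{eq4} for vanishing of one of them; this is a reasonable fill-in of the same argument, not a different route.
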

 \begin{proof}
 	(i) We have
 	\begin{align}
 	&(-1)^{k+1}G^{k+1}_m(x)  \label{} \\
 	=& \int_{0}^{x}(-1)^{k}zG^{k}_m(z) dm(z) + x\int_{x}^{y}(-1)^{k}G^{k}_m(z) dm(z) \label{} \\	
 	&+ x\int_{y}^{1}(-1)^{k}G^{k}_m(z) dm(z). \label{}
 	\end{align}
 	From Proposition \ref{Iest}, it holds
 	\begin{align}
 	\int_{0}^{x}(-1)^{k}zG^{k}_m(z) dm(z) \leq (-1)^kG^k_m(x)\int_{0}^{x}zdm(z)
 	\end{align}
 	and
 	\begin{align}
 	x\int_{x}^{y}(-1)^{k}G^{k}_m(z) dm(z) \leq (-1)^kG^k_m(x)\int_{x}^{y}zdm(z). \label{}
 	\end{align}
 	Hence we obtain \eqref{eq6}. \\
 	(ii) From (i), we have for $0 < x < y \leq 1$
 	\begin{align}
 	\frac{(-1)^{k+1}G^{k+1}_m(x)}{(-1)^kG^k_m(x)} \leq \int_{0}^{y}zdm(z) + \frac{x}{(-1)^kG^k_m(x)}\int_{y}^{1}(-1)^kG^k_m(z)dm(z). \label{}
 	\end{align}
 	Hence from Proposition \ref{divofM}, we have
 	\begin{align}
 	\limsup_{x \to +0}\frac{(-1)^{k+1}G^{k+1}_m(x)}{(-1)^kG^k_m(x)} \leq \int_{0}^{y}zdm(z). \label{}
 	\end{align}
 	Since $y > 0$ can be taken arbitrary small, we obtain the desired result.
 \end{proof}

\section{Convergence of inverse local times}\label{section: contthmofinverselocaltime}

Here we briefly review the construction of sample paths of jumping-in diffusions. 
Let us assume a strong Markov process $X$ on $[0,\infty)$ has continuous paths and natural scale on $(0,\infty)$ and, as soon as $X$ hits the origin, it jumps into $(0,\infty)$. 
Then there exist Radon measures $m$ on $(0,\infty)$ with full support and $j$ on $(0,\infty)$ and such a process has the local generator $\cL$ of the form
\begin{equation}
\cL u(x) = \frac{d}{dm}\frac{d^+}{dx}u(x) \quad \text{for} \ x \in (0,\infty)  \label{}
\end{equation} 
subject to Feller's boundary condition (see e.g., Feller \cite{Feller:Theparabolic})
\begin{align}
\int_{0}^{\infty}(u(x) - u(0))j(dx) = 0. \label{}
\end{align}
Conversely, we can construct such processes from $(m,j)$ with
\begin{equation}
\text{(C)}
\left\{
\begin{aligned}
&(i)\  j(1, \infty) + \int_{0}^{1}xj(dx) + \int_{0}^{1}|G_m(x)|j(dx) < \infty,  \\
&(ii)\  j(0,1) = \infty
\end{aligned}
\label{existenceoftheprocess2}
\right.
\end{equation}
(see It\^o \cite{Ito:PPP} and Rogers \cite{Rogers1}).

Sample paths of a unilateral jumping-in diffusion is constructed from $m$ and $j$ via It\^o's excursion theory as follows: Let $m$ and $j$ be Radon measures on $(0,\infty)$ with the condition (C).
Define $T_0(e) = \inf\{ s > 0 \mid e(s) = 0 \} \ (e \in \bD)$ and $\bE$ as the set of all elements $e$ in $\bD$ which satisfy that $e(u) = 0$ for every $u \geq T_0(e)$ if $T_0(e) < \infty$.
Let $P^m_x \ (x \in (0,\infty))$ be the law of the diffusion process with speed measure $dm$ starting from $x$ and killed at $0$.
Then the following measure $n_{m,j}$ is the excursion measure of the process $X_{m,j}$:
\begin{align}
n_{m,j}(A) = \int_{0}^{\infty}P^{m}_x (A)j(dx) \quad (A \in \cB(\bE)). \label{excursion_measure} 
\end{align} 
We construct the sample paths of $X_{m,j}$.
We define $N(ds de)$ as a Poisson random measure on $(0,\infty)\times \bE$ having intensity measure $dx \otimes n_{m,j}$ being defined on some probability space. Here $dx$ is the Lebesgue measure. We define $D(p) = \{ s \in (0,\infty) \mid N(\{s\}\times \bE ) = 1 \}$ and the map $p:D(p) \to \bE$ 
such that $p[s] \ (s \in D(p))$ is the only one element of the support of the measure $N(\{s\} \times de)$.
We define the process $\eta_{m,j}$ as follows:
\begin{align}
\eta_{m,j}(u) = \int_{(0,u] \times \bE}T_0(e) N(ds de). \label{}
\end{align}
Then we construct $X_{m,j}$ as follows: 
\begin{equation}
X_{m,j}(t) = \left\{
\begin{aligned}
&p[u](t - \eta_{m,j}(u-)) & &\text{if} \ u \in D(p) \ \text{and} \ \eta_{m,j}(u-) \leq t < \eta_{m,j}(u),  \\
&0 & &\text{otherwise}.
\end{aligned}
\right. \label{}
\end{equation}
Then $\eta_{m,j}$ plays the role of the inverse local time at $0$ of $X_{m,j}$.
This process $X_{m,j}$ thus constructed is the unilateral jumping-in diffusion associated to $m$ and $j$ and started from $0$. We denote the law of $X_{m,j}$ by $P$.

We explain a construction of a bilateral jumping-in diffusion.
Let $m_+,m_-$ and $j_+,j_-$ be Radon measures on $(0,\infty)$ and suppose $(m_+,j_+)$ and $(m_-,j_-)$ satisfy (C). Then we construct the bilateral jumping-in diffusion processes $X_{m_+,j_+;m_-,j_-}$ i.e., a Markov process on $\bR$ which behaves like $X_{m_+,j_+}$ while $X$ is positive and like $-X_{m_-,j_-}$ while $X$ is negative and as soon as the process hits the origin it jumps into $\bR \setminus \{ 0 \}$ according to $j_+$ and $j_-$.
The precise definition is as follows. 
Take two independent Poisson random measures $N_+$ and $\tilde{N}_-$ defined on a common probability space whose intensity measures are $n_{m_+,j_+}$ and $n_{m_-,j_-}$, respectively. 
Define $N_{m_-,j_-}(ds de) = \tilde{N}_{m_-,j_-}(ds d(-e))$. Then we can define $X_{m_+,j_+;m_-,j_-}$ from the excursion point process $N_{m_+,j_+} + N_{m_-,j_-}$.

Here we show a useful expression for the Laplace exponent of $\eta_{m,j}$.
For $\lambda > 0$, we have
\begin{align}
\chi_{m,j}(\lambda) &:= -\log P[\mathrm{e}^{-\lambda \eta_{m,j}(1)}] \label{} \\
&\ = \int_{0}^{\infty}(1 - \mathrm{e}^{-\lambda u}) n_{m,j}(T_0 \in du)  \label{} \\
&\ = \int_{0}^{\infty}P^m_x[ 1 - \mathrm{e}^{-\lambda T_0}]j(dx). \label{}
\end{align} 
It is well-known that the following holds (see e.g., \cite{Ito:Essentialsof}):
\begin{align}
g_m(\lambda;x) = P^m_x[\mathrm{e}^{-\lambda T_0}]. \label{}
\end{align}
Hence we obtain the following representation:
\begin{align}
\chi_{m,j}(\lambda) &= \int_{0}^{\infty}(1 - g_m(\lambda;x))j(dx) \quad (\lambda > 0 ). \label{eq94}
\end{align}

Before we state our continuity theorem, we prepare estimates of $G^k_{m_n}$ for a sequence of strings $\{m_n\}_n$.
\begin{Prop}\label{elemestofG2}
	Let $m_n \in \cM$. Assume the following holds:
	\begin{align}
	&\lim_{n \to \infty}m_n(x) = 0 \ \text{for every} \ x > 0. \label{} \\
	&\lim_{n \to \infty}\int_{0}^{1}ydm_n(y) = 0. \label{}
	\end{align}
	Then for every $0 < x \leq 1$ and $k \geq 1$, we have the following:
	\begin{enumerate}
		\item $\lim_{n \to \infty}\sup_{z \in [0,x_0]}|G^k_{m_n}(z)| = 0$ \quad for every $x_0 > 0$.
		\item $\lim_{n \to \infty}\int_{x_0}^{x_1}|G^k_{m_n}(z)|dm_n(z) = 0$ \quad for every $0 < x_0 < x_1$.
		\item $\lim_{n \to \infty} \sup_{y \in (0,1]}\left| \frac{G^{k+1}_{m_n}(y)}{G^k_{m_n}(y)} \right| = 0$ \quad for every $k \geq 1$.
	\end{enumerate}
 \end{Prop}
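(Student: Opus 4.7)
The plan is to prove (i) by induction on $k$, obtain (ii) as an immediate corollary, and read off (iii) directly from Proposition \ref{divofI}.

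For the base case $k=1$ of (i), I would use Fubini to write, for $x \in [0, 1]$,
\begin{align}
G^1_{m_n}(x) = -\int_{(0,1]}(z \wedge x)\,dm_n(z) = -\int_{(0, x]} z\,dm_n(z) - x\bigl(m_n(1) - m_n(x)\bigr), \n
\end{align}
so the two hypotheses force $|G^1_{m_n}(x_0)| \to 0$ for every fixed $x_0 \in (0, 1]$. Since $|G^1_{m_n}|$ is non-decreasing on $[0, 1]$ by Proposition \ref{Iest}, the pointwise convergence at the endpoint upgrades to uniform convergence on $[0, x_0]$. For $x_0 > 1$ I would add the elementary estimate $|G^1_{m_n}(x) - G^1_{m_n}(1)| \leq (x_0 - 1)\bigl(m_n(x_0) - m_n(1)\bigr)$ for $x \in [1, x_0]$.

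For the inductive step $k \geq 2$ of (i), iterating the bound \eqref{eq181} on $[0,1]$ yields
\begin{align}
\sup_{[0, 1]}|G^k_{m_n}| \leq \sup_{[0, 1]}|G^1_{m_n}| \cdot \Bigl(\int_0^1 z\,dm_n(z)\Bigr)^{k-1} \longrightarrow 0. \n
\end{align}
To extend to $[0, x_0]$ with $x_0 > 1$, I would rewrite the recursion \eqref{eq196} for $x \in (1, x_0]$ as
\begin{align}
G^k_{m_n}(x) - G^k_{m_n}(1) = \int_1^x (x - z)\,G^{k-1}_{m_n}(z)\,dm_n(z), \n
\end{align}
which is bounded in modulus by $x_0 \cdot \sup_{[0, x_0]}|G^{k-1}_{m_n}| \cdot \bigl(m_n(x_0) - m_n(1)\bigr)$, tending to $0$ by the inductive hypothesis and the pointwise convergence of $m_n$.

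Part (ii) then follows at once from (i):
\begin{align}
\int_{x_0}^{x_1}|G^k_{m_n}(z)|\,dm_n(z) \leq \sup_{[0, x_1]}|G^k_{m_n}| \cdot \bigl(m_n(x_1) - m_n(x_0)\bigr) \longrightarrow 0, \n
\end{align}
and (iii) is immediate from \eqref{eq181}, which gives $|G^{k+1}_m(y)/G^k_m(y)| \leq \int_0^1 z\,dm(z)$ uniformly in $y \in (0, 1]$. The main delicate point is the passage to $x_0 > 1$ in (i): the monotonicity of $(-1)^k G^k_{m_n}$ and the compact bound \eqref{eq181} are features of the interval $[0, 1]$ only, so beyond $1$ one must exploit the recursion directly together with the inductive hypothesis on the enlarged interval and the vanishing of the total mass $m_n(x_0) - m_n(1)$.
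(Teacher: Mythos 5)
Your proof is correct and follows essentially the same route as the paper: handle $k=1$ directly from the two hypotheses, establish the $[0,1]$ case of (i) via the bound \eqref{eq181}, extend to $[1,x_0]$ with a Fubini rearrangement controlled by the inductive hypothesis and the vanishing of $m_n(x_0)-m_n(1)$, then get (ii) and (iii) as immediate consequences. (Your direct sup-times-mass estimate for (ii) is a little cleaner than the paper's appeal to dominated convergence, whose application is slightly awkward since the measure $dm_n$ itself varies with $n$, but the content is the same.)
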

 \begin{proof}
	(i) Fix $x_0 \geq 1$. We prove by induction that the following holds: 
	\begin{align}
	\lim_{n \to \infty}\sup_{y \in [0,x_0]}|G^k_{m_n}(y)| = 0 \ \text{for every} \ k \geq 1. \label{eq67} 
	\end{align}
	First we show the case: $k=1$.
	Since it follows that
	\begin{align}
		\sup_{x \in [0,x_0]}|G^1_{m_n}(x)| \leq \int_{0}^{1}ydm_n(y) + \int_{1}^{x_0}|\tilde{m}_n(y)|dy, \label{}
	\end{align}
	we obtain \eqref{eq67} for $k=1$. Next we assume that \eqref{eq67} holds for an integer $k \geq 1$. From \eqref{eq181}, for $0 < x \leq 1$, it follows
	\begin{align}
	(-1)^{k+1}G^{k+1}_{m_n}(x) \leq (-1)^kG^k_{m_n}(x)\int_{0}^{1}ydm_n(y). \label{eq182}
	\end{align}
	Then it holds that
	\begin{align}
		\lim_{n \to \infty}\sup_{x \in [0,1]}|G^{k+1}_{m_n}(x)| = 0 \label{}.
	\end{align}
	For $x \in [1,x_0]$, we have
	\begin{align}
		|G^{k+1}_{m_n}(x)| &= \left| -x\int_{1}^{x}(-1)^kG^k_{m_n}(y)dm_n(y)
		+\int_{0}^{x}(-1)^kyG^k_{m_n}(y)dm_n(y) \right| \label{} \\
		&\leq \sup_{z \in [0,x_0]}|G^k_{m_n}(z)|\left(x_0|\tilde{m}_n(x_0)| +  \int_{0}^{x_0}ydm_n(y)\right). \label{}
	\end{align}
	Hence we obtain \eqref{eq67} for $k+1$, and therefore we obtain (i). \\
	(ii) This is obvious from (i) and the dominated convergence theorem. \\
	(iii) This is obvious \eqref{eq182}.
 \end{proof}
We introduce notion of convergence for strings $m$ with $d(m) < \infty$.
\begin{Def} \label{convofstring}
 	For $m_n \in \cM$, we denote $m_n \xrightarrow{G} 0$ when the following holds:
 	\begin{enumerate}
 		\item $\lim_{n \to \infty}m_n(x) = 0$ for every $x > 0$.
		\item $\lim_{n \to \infty}\int_{0}^{1}ydm_n(y) = 0$.
 		\item $\lim_{n \to \infty}\int_{0}^{1}(-1)^dG^d_{m_n}(x)dm_n(x) = 0$ for some integer $d \geq 1$.
 	\end{enumerate}
 \end{Def}
\begin{Rem}
	Note that from the condition (iii), we have $\limsup_{n \to \infty} d(m_n) \leq d < \infty$.
\end{Rem}

 Now we establish a continuity theorem for inverse local times $\eta_{m,j}$.  
 \begin{Thm}\label{convtoBM2}
 	Let $m_n \in \cM$ and $j_n$ be a Radon measure on $(0,\infty)$. Suppose the following holds: 
 	\begin{enumerate}
		\item $m_n \xrightarrow{G} 0$.
 		\item $\limsup_{n \to \infty}\int_{0}^{1}xj_n(dx) < \infty$.
 		\item $j_n(dx) \xrightarrow[n \to \infty]{w} 0$ on $[1,\infty]$.
 		\item $G^2_{m_n}(x)j_n(dx) \xrightarrow[n \to \infty]{w} \kappa\delta_0(dx) $ on $[0,1]$ for a constant $\kappa \geq 0$.
 	\end{enumerate}
 	Then if we take $b_n = -\int_{0}^{1}G_{m_n}(x)j_n(dx)$, we have 
 	\begin{align}
 	\eta_{m_n,j_n}(t) - b_n t \xrightarrow[n \to \infty]{d} B(2\kappa t) \quad \text{on} \ \bD. \label{}
 	\end{align}
 \end{Thm}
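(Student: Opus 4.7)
The plan is to reduce the assertion to pointwise convergence of the Laplace exponents of the L\'evy processes $\eta_{m_n,j_n}(t) - b_n t$. Since each of these has exponent $\Psi_n(\lambda) := \chi_{m_n,j_n}(\lambda) - b_n \lambda$ and the target $B(2\kappa t)$ has exponent $-\kappa\lambda^2$, it suffices to show $\Psi_n(\lambda) \to -\kappa\lambda^2$ for every $\lambda > 0$; convergence of L\'evy exponents then yields convergence in $\bD$.

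Fix $\lambda > 0$. Using \eqref{eq94}, write $\chi_{m_n,j_n}(\lambda)$ as the sum of the integrals of $(1-g_{m_n}(\lambda;x))j_n(dx)$ over $(0,1]$ and over $(1,\infty)$. Since $|1-g_{m_n}| \leq 1$, the tail piece vanishes by the assumed weak convergence $j_n \to 0$ on $[1,\infty]$. For the remaining integral, choose $d \geq 1$ so that $\int_0^1 (-1)^d G^d_{m_n}(x)dm_n(x) \to 0$ (available since $m_n \xrightarrow{G} 0$) and apply the decomposition $g_{m_n} = \varphi^d_{m_n}(\lambda;\cdot) - c^d_{m_n}(\lambda)\psi_{m_n}(\lambda;\cdot)$ from \eqref{defofg}. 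Expanding $\varphi^d_{m_n}$ via \eqref{eq9} singles out $-\lambda G^1_{m_n}(x) - \lambda^2 G^2_{m_n}(x)$ as the dominant terms. Using $G^1_m(x) = G_m(x) - m(1)x$, the linear piece combines with $-b_n\lambda$ to give
$$-\lambda\int_0^1 G^1_{m_n}(x)j_n(dx) - b_n\lambda = \lambda m_n(1)\int_0^1 x\,j_n(dx),$$
which tends to $0$ because $m_n(1) \to 0$ and $\int_0^1 x\,j_n(dx)$ is bounded. Testing the continuous function $\mathbf{1}$ against the hypothesis $G^2_{m_n}(x)j_n(dx) \xrightarrow{w}\kappa\delta_0$ yields $-\lambda^2\int_0^1 G^2_{m_n}j_n \to -\kappa\lambda^2$.

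Three error terms remain. (a) For $3 \leq k \leq d$, Proposition \ref{elemestofG2}(iii) gives $\sup_{x \in (0,1]}|G^k_{m_n}(x)/G^2_{m_n}(x)| \to 0$, so $\lambda^k \int_0^1 |G^k_{m_n}|j_n \leq o(1)\cdot\int_0^1 G^2_{m_n}j_n \to 0$. (b) The infinite tail beyond $G^d_{m_n}$ in the series for $\varphi^d_{m_n}$ is bounded by $x\lambda^{d+1}S^d_{m_n}(1)e^{\lambda(m_n\bullet s)(1)}$ via \eqref{eq183}; integrating against $j_n$ produces the bounded factor $\int_0^1 x\,j_n(dx)$ multiplied by $S^d_{m_n}(1) \to 0$. (c) Proposition \ref{estofpsi} yields $\psi_{m_n}(\lambda;x) \leq xe^{\lambda(m_n\bullet s)(1)} \leq Cx$ on $[0,1]$ uniformly in $n$, so $\int_0^1 \psi_{m_n}j_n$ is bounded, and what remains is to show $c^d_{m_n}(\lambda) \to 0$.

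This last claim is the main obstacle. For any fixed $x_0 > 0$,
$$c^d_{m_n}(\lambda) = \frac{\varphi^d_{m_n}(\lambda;x_0) - g_{m_n}(\lambda;x_0)}{\psi_{m_n}(\lambda;x_0)}.$$
Proposition \ref{elemestofG2}(i) together with the tail control of Proposition \ref{estofphi^d} gives $\varphi^d_{m_n}(\lambda;x_0) \to 1$, and Proposition \ref{estofpsi} with $(m_n\bullet s)(x_0) \to 0$ gives $\psi_{m_n}(\lambda;x_0) \to x_0$. For $g_{m_n}(\lambda;x_0)$, apply the representation \eqref{eq8}: the lower bound $\psi_{m_n}(\lambda;y) \geq y$ from \eqref{eq122} furnishes the integrable dominating function $1/y^2$, and dominated convergence gives $\int_{x_0}^\infty dy/\psi_{m_n}(\lambda;y)^2 \to 1/x_0$, whence $g_{m_n}(\lambda;x_0) \to 1$. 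Therefore $c^d_{m_n}(\lambda) \to 0$, completing the argument.
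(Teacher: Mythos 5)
Your proof is correct and follows essentially the same route as the paper's: split the Laplace exponent at $x=1$, kill the tail via assumption (iii), decompose $g_{m_n}=\varphi^d_{m_n}-c^d_{m_n}\psi_{m_n}$, isolate the $k=2$ term via assumption (iv), and show the remaining pieces ($k\geq 3$ by Proposition~\ref{elemestofG2}(iii), the tail $\Phi^d$ by \eqref{eq183} plus (ii), and $c^d_{m_n}\psi_{m_n}$ by Proposition~\ref{vanishofcd}-type reasoning plus \eqref{eq186}) all vanish. If anything your bookkeeping is slightly more careful than the paper's at the $k=1$ step: you explicitly use $G^1_m(x)=G_m(x)-m(1)x$ to see that $-\lambda\int_0^1 G^1_{m_n}dj_n$ cancels with $-b_n\lambda$ up to the vanishing term $\lambda m_n(1)\int_0^1 x\,j_n(dx)$, whereas the paper's displayed decomposition seems to drop the $+\lambda G_{m_n}$ term (presumably a typo), even though that cancellation is precisely what makes the $k=1$ contribution disappear.
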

  For the proof, we need two propositions. One of them can be seen as a kind of continuity of $c^d_{m_n}(\lambda)$ with respect to $m_n$ under the convergence $m_n \xrightarrow{G} 0$.
 \begin{Prop}\label{vanishofcd}
 	Let $m_n \in \cM$. Suppose $m_n \xrightarrow{G} 0$. Then the following holds for an integer $d \geq 1$:
 	\begin{align}
 	\lim_{n \to \infty}c^d_{m_n}(\lambda) = 0. \label{}
 	\end{align}
 \end{Prop}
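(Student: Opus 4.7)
The plan is to derive a closed-form expression for $c^d_m(\lambda)$ by evaluating the defining identity $g_m(\lambda;\cdot) = \varphi^d_m(\lambda;\cdot) - c^d_m(\lambda)\psi_m(\lambda;\cdot)$ at the fixed point $x = 1$. For $\lambda > 0$ every term of the series \eqref{eq115} is non-negative, so $\psi_m(\lambda;1) \geq 1 > 0$, and solving yields
\begin{equation}
c^d_m(\lambda) = \frac{\varphi^d_m(\lambda;1) - g_m(\lambda;1)}{\psi_m(\lambda;1)}.
\end{equation}
It then suffices to verify that each of $\psi_{m_n}(\lambda;1)$, $\varphi^d_{m_n}(\lambda;1)$, and $g_{m_n}(\lambda;1)$ converges to $1$; the numerator cancels and the denominator tends to $1$, forcing $c^d_{m_n}(\lambda) \to 0$.

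For $\psi_{m_n}(\lambda;1) \to 1$ I would apply the estimate \eqref{eq122} with $d = 1$, giving $0 \leq \psi_m(\lambda;1) - 1 \leq \lambda (m\bullet s)(1) e^{\lambda (m\bullet s)(1)}$, and invoke condition (ii) of $m_n \xrightarrow{G} 0$ to get $(m_n\bullet s)(1) = \int_0^1 y\, dm_n(y) \to 0$. For $\varphi^d_{m_n}(\lambda;1) \to 1$, I would expand via \eqref{eq9}: the finite sum $\sum_{k=1}^d \lambda^k G^k_{m_n}(1)$ vanishes term by term by Proposition \ref{elemestofG2}(i), and the infinite tail is dominated, via the estimate \eqref{eq183}, by $\lambda^{d+1} S^d_{m_n}(1)\, e^{\lambda(m_n\bullet s)(1)}$. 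The key identification is that, because $(-1)^d G^d_m \geq 0$ on $[0,1]$ (Proposition \ref{Iest}), the supremum defining $S^d_{m_n}(1)$ is attained at $y = 1$:
\begin{equation}
S^d_{m_n}(1) = \int_0^1 (-1)^d G^d_{m_n}(z)\, dm_n(z),
\end{equation}
which is precisely the integral in condition (iii) of $m_n \xrightarrow{G} 0$, and therefore vanishes for any $d$ at least as large as the integer witnessing that condition.

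For $g_{m_n}(\lambda;1) \to 1$, I would use the representation \eqref{eq8}, namely $g_m(\lambda;1) = \psi_m(\lambda;1)\int_1^\infty \psi_m(\lambda;y)^{-2}\, dy$. The inequality $\psi_m(\lambda;y) \geq y$ (from non-negativity of the series terms) supplies the integrable majorant $y^{-2}$ on $[1,\infty)$; pointwise, $\psi_{m_n}(\lambda;y) \to y$ for every $y > 0$ by the same estimate \eqref{eq122}, provided one first establishes $(m_n\bullet s)(y) \to 0$ for every $y > 0$. This last fact is obtained by splitting $\int_0^y = \int_0^1 + \int_1^y$ and, on the second piece, integrating by parts to reduce to $m_n(y) \to 0$ (condition (i)) together with bounded convergence of $\int_1^y m_n(z)\, dz$. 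Dominated convergence combined with the first step then gives $g_{m_n}(\lambda;1) \to 1$, completing the proof.

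The main obstacle is the tail control for $\varphi^d_m$; this is the place where the somewhat opaque condition (iii) of the $\xrightarrow{G}$ convergence becomes essential, through the identification $S^d_{m_n}(1) = \int_0^1 (-1)^d G^d_{m_n}\, dm_n$, so that the integral appearing in (iii) is exactly the quantity one needs to vanish. A subsidiary technical point is the promotion of $(m_n\bullet s)(y) \to 0$ from $y = 1$ to all $y > 0$, needed for the dominated convergence step in handling $g_{m_n}(\lambda;1)$.
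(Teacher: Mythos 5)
Your proposal is correct and follows essentially the same route as the paper's proof: both evaluate the identity $g_{m} = \varphi^d_{m} - c^d_{m}\psi_{m}$ at a fixed $x \in (0,1]$ and show $\psi_{m_n}(\lambda;x) \to x$, $\varphi^d_{m_n}(\lambda;x) \to 1$, $g_{m_n}(\lambda;x) \to 1$ via the estimates \eqref{eq122}, \eqref{eq183}, and the integral representation \eqref{eq8}. The paper leaves the dominated-convergence step for $g_{m_n}$ and the promotion of $(m_n\bullet s)(y)\to 0$ to all $y>0$ implicit; you spell these out, which is a matter of completeness rather than a different strategy.
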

 \begin{proof}
 	Fix an integer $d \geq 1$ so that the following holds:
 	\begin{align}
 	\lim_{n \to \infty}\int_{0}^{1}G^d_{m_n}(x)dm_n(dx) = 0. \label{}
 	\end{align}
 	Since it holds that
 	\begin{align}
 	g_{m_n}(\lambda;x) = \varphi^d_{m_n}(\lambda;x) - c^d_{m_n}(\lambda)\psi_{m_n}(\lambda;x) \quad (x \in [0,1]), \label{}
 	\end{align}
 	it is enough to show 
 	\begin{align}
 	\lim_{n \to \infty}\varphi^d_{m_n}(\lambda;x) = 1,\ \lim_{n \to \infty}\psi_{m_n}(\lambda;x) = x,\ \lim_{n \to \infty}g_{m_n}(\lambda;x) = 1 \label{} 
 	\end{align}
 	for some $x > 0$. Fix $x \in (0,1]$.
 	From \eqref{eq122}, we have
 	\begin{align}
 	0 \leq \psi_{m_n}(\lambda;x) - x \leq x\lambda (m_n \bullet s)(x)\mathrm{e}^{\lambda (m_n \bullet s)(x)}. \label{}
 	\end{align}
 	Since $m_n \xrightarrow{G} 0$ holds, we have
 	\begin{align}
 	\lim_{n \to \infty}\psi_{m_n}(\lambda;x) = x \quad (x \geq 0, \lambda > 0). \label{}
 	\end{align}
 	Then from \eqref{eq8}, we also obtain 
 	\begin{align}
 	\lim_{n \to \infty}g_{m_n}(\lambda;x) = 1. \label{}
 	\end{align}
 	From \eqref{eq183}, we have
 	\begin{align}
 	\left|\varphi^d_{m_n}(\lambda;x) - 1 - \sum_{k=1}^{d}\lambda^kG^k_{m_n}(x) \right|
 	\leq (-1)^{d}\lambda^{d+1}\left(\int_{0}^{x}G^{d}_{m_n}(y)dm_n(y)\right)x\mathrm{e}^{\lambda (m_n \bullet s)(x)}, \label{}
 	\end{align}
 	which converges to $0$ as $n \to \infty$.
 	From (i) of Proposition \ref{elemestofG2}, we have
 	\begin{align}
 	\lim_{n \to \infty}G^k_{m_n}(x) = 0 \label{}
 	\end{align}
 	for $1 \leq k \leq d$. Then it follows that
 	\begin{align}
 	\lim_{n \to \infty}\varphi^d_{m_n}(\lambda;x) = 1. \label{}
 	\end{align}
 	The proof is complete.
 \end{proof}
 The other proposition is a continuity theorem for Laplace transforms of L\'evy processes without negative jumps. 
 \begin{Prop}\label{continuityofLT}
	Let $X_n,X$ be real-valued random variables. Assume
	\begin{align}
		P[\mathrm{e}^{-\lambda X_n}], \quad P[\mathrm{e}^{-\lambda X}] < \infty
		\quad \text{and} \quad \lim_{n \to \infty}P[\mathrm{e}^{-\lambda X_n}] = P[\mathrm{e}^{-\lambda X}] \quad (\lambda > 0). \label{}
	\end{align}
	Then we have
	\begin{align}
		X_n \xrightarrow[n \to \infty]{d} X. \label{}
	\end{align}
 \end{Prop}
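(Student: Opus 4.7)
The plan is to establish $X_n \xrightarrow{d} X$ via Helly's selection theorem together with identification of subsequential vague limits through the Laplace transform. By Helly's theorem, any subsequence of the distributions of $X_n$ admits a further subsequence---denote the corresponding random variables by $\{X_{n_k}\}$ and their distribution functions by $F_{n_k}$---converging vaguely on $\bR$ to some sub-probability measure $G$. It suffices to show that every such $G$ is a probability measure coinciding with the law of $X$.

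The key step is to pass to the limit in $P[\mathrm{e}^{-\lambda X_{n_k}}]$ for $\lambda \in (0,1)$, for which I decompose
\[
P[\mathrm{e}^{-\lambda X_{n_k}}] = \int_{[-M,M]}\mathrm{e}^{-\lambda x}dF_{n_k}(x) + \int_{x > M}\mathrm{e}^{-\lambda x}dF_{n_k}(x) + \int_{x < -M}\mathrm{e}^{-\lambda x}dF_{n_k}(x).
\]
The middle term is bounded by $\mathrm{e}^{-\lambda M}$. The lower-tail term is bounded uniformly in $k$ by $\mathrm{e}^{-(1-\lambda)M}P[\mathrm{e}^{-X_{n_k}}]$, using $\mathrm{e}^{-\lambda x}\leq \mathrm{e}^{-(1-\lambda)M}\mathrm{e}^{-x}$ for $x<-M$; since $P[\mathrm{e}^{-X_{n_k}}]\to P[\mathrm{e}^{-X}]<\infty$, this tends to $0$ as $M\to\infty$ uniformly in $k$. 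Choosing $\pm M$ as continuity points of $G$, vague convergence gives $\int_{[-M,M]}\mathrm{e}^{-\lambda x}dF_{n_k}(x)\to \int_{[-M,M]}\mathrm{e}^{-\lambda x}dG(x)$ as $k\to\infty$. Combining these and letting $M\to\infty$ yields
\[
P[\mathrm{e}^{-\lambda X}] = \int_\bR \mathrm{e}^{-\lambda x}dG(x) \quad (\lambda \in (0,1)),
\]
and the same argument, using any $\lambda_1>\lambda$ in place of $1$, extends the identity to all $\lambda>0$.

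It remains to show that $G$ is a probability measure equal to the law of $X$. Letting $\lambda\to 0^+$ in the above identity, the right-hand side tends to $G(\bR)$ by monotone convergence, while the left-hand side tends to $1$ by dominated convergence using the bound $\mathrm{e}^{-\lambda X}\leq 1 + \mathrm{e}^{-X}$ for $\lambda\in(0,1]$ and the integrability of $\mathrm{e}^{-X}$. Hence $G(\bR)=1$, and uniqueness of the Laplace transform for finite measures on $\bR$ (finite on an open set) forces $G$ to coincide with the distribution of $X$.

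The main obstacle is ensuring the tail contributions to $P[\mathrm{e}^{-\lambda X_{n_k}}]$ vanish uniformly in $k$. The lower tail is handled by the standard exponential Markov inequality via the boundedness of $P[\mathrm{e}^{-X_{n_k}}]$. In contrast, the upper tail admits no direct control from the Laplace transform of a real-valued random variable; the argument relies crucially on the hypothesis that $X$ is a proper random variable, ensuring $P[\mathrm{e}^{-\lambda X}]\to 1$ as $\lambda\to 0^+$, which is what ultimately precludes mass escape to $+\infty$ in any vague subsequential limit.
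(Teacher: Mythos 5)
Your proof is correct. The paper omits its own proof (referring to Proposition A.1 of the cited earlier work by Yamato and Yano), so a line-by-line comparison with the source is not possible; the Helly-selection argument with Laplace-transform identification that you use is, however, the standard route for this bilateral version of the continuity theorem, and your treatment of the two tails --- exponential Markov bound on the lower tail, properness of $X$ controlling the upper tail --- is precisely the content that distinguishes it from the one-sided ($X_n\geq 0$) case. One small slip: the appeal to monotone convergence for $\int_\bR e^{-\lambda x}\,dG(x)\to G(\bR)$ as $\lambda\to 0^+$ is not quite right, since $e^{-\lambda x}$ increases to $1$ as $\lambda\downarrow 0$ only for $x\geq 0$ and decreases to $1$ for $x<0$. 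Either split the integral at the origin (monotone convergence on $[0,\infty)$, dominated convergence on $(-\infty,0)$), or use dominated convergence globally with the same dominator $1+e^{-\lambda_0 x}$ you already use for the left-hand side, which is $G$-integrable because $\int_\bR e^{-\lambda_0 x}\,dG(x)=P[e^{-\lambda_0 X}]<\infty$.
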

 We omit the proof of Proposition \ref{continuityofLT} (see, e.g., \cite[Proposition A.1]{YamatoYano}).

 Now we prove Theorem \ref{convtoBM2}.

 \begin{proof}[Proof of Theorem \ref{convtoBM2}]
 	Take an integer $d$ such that
 	\begin{align}
 		\limsup_{n \to \infty}\int_{0}^{1}(-1)^dG^d_{m_n}(x)dm_n(x) < \infty. \label{}
 	\end{align}
	From \eqref{eq94} and Proposition \ref{continuityofLT}, it is enough to show the following for every $\lambda > 0$:
	\begin{align}
	\lim_{n \to \infty}\left( \int_{0}^{\infty}(1-g_{m_n}(\lambda;x))j_n(dx) + \lambda \int_{0}^{1}G_{m_n}(x)j_n(dx)\right) = -\kappa\lambda^2. \label{eq184}
	\end{align}
	Fix $\lambda > 0$.
	Since the function $1 - g_{m_n}$ is bounded continuous,	we see from the assumption (iii) that \eqref{eq184} is equivalent to the following (with $(\lambda)$ being omitted):
	\begin{align}
	\lim_{n \to \infty} \int_{0}^{1}(1-g_{m_n} + \lambda G_{m_n})dj_n = -\kappa\lambda^2. \label{eq187}
	\end{align}
	From \eqref{defofg}, we have
	\begin{align}
	1 - g_{m_n} + \lambda G_{m_n} = 1 - (\varphi^d_{m_n} - c^d_{m_n}\psi_{m_n}) + \lambda G_{m_n} 
	= - \Phi^d_{m_n} + c^d_{m_n}\psi_{m_n} -\sum_{k=1}^{d}\lambda^k G^k_{m_n}, \label{}
	\end{align}
	where $\Phi^d_{m_n} = \varphi^d_{m_n} - 1 - \sum_{k=1}^{d}\lambda^k G^k_{m_n}$.
	From \eqref{eq183}, we have for $x \in [0,1]$
	\begin{align}
		|\Phi^d_{m_n}| \leq	x\lambda^{d+1}\mathrm{e}^{(m_n \bullet s)(x)}\int_{0}^{x} (-1)^d G^d_{m_n}(y)dm_n(y). \label{}		
	\end{align}
	Then from the assumption (i) and (ii), it holds
	\begin{align}
		\lim_{n \to \infty}\int_{0}^{1}\Phi^d_{m_n}dj_n = 0. \label{}
	\end{align}
	Similarly, from \eqref{eq122}, we can show
	\begin{align}
	\limsup_{n \to \infty}\int_{0}^{1}\psi_{m_n}dj_n < \infty, \label{eq186}
	\end{align}
	and we have from Proposition \ref{vanishofcd} 
	\begin{align}
		\lim_{n \to \infty}c^d_{m_n} = 0. \label{}
	\end{align}
	From the assumption (iv) and \eqref{eq188}, it follows that
	\begin{align}
		\lim_{n \to \infty}\int_{0}^{1}\sum_{k=1}^{d}\lambda^k G^k_{m_n}dj_n 
		= \lim_{n \to \infty}\lambda^2\int_{0}^{1}G^2_{m_n}dj_n = \kappa\lambda^2. \label{}
	\end{align}
	The proof is complete.
\end{proof}

 \section{Scaling limit of inverse local times}\label{section: scalinglimit}
 Applying the continuity theorems shown in the previous section, we establish the scaling limit of $\eta_{m,j}$.  
 We reduce the scaling limit of $\eta_{m,j}$ to the continuity of it with respect to $m$ and $j$ by the change of variables.
 For every $m \in \cM$, $a,b > 0$ and $x > 0$, it may not be difficult to see that 
 \begin{align}
 g_{m}(a\lambda;bx) &= g_{am^b}(b\lambda;x), \label{} \\
 \psi_{m}(a\lambda;bx) &= b\psi_{am^b}(b\lambda;x) \label{}
 \end{align}
 for $m^b(x) = m(bx)$.

 Now we prove Theorem \ref{informal-main-alpha2+}.
 \begin{proof}[Proof of Theorem \ref{informal-main-alpha2+}]
 	We may assume $m(\infty) = 0$ without loss of generality.
 	Define
 	\begin{align}
 		m_\gamma(x) = \frac{\gamma^{\alpha/2 - 1/2}}{u(\gamma^{\alpha/2})}m(\gamma^{\alpha/2} x), \quad  j_\gamma(dx) = \frac{\gamma}{v(\gamma^{\alpha/2})}j(d(\gamma^{\alpha/2} x)). \label{eq225}
 	\end{align}
 	From the change of variables, we have for $\gamma' := \gamma^{\alpha / 2}$
 	\begin{align}
 		\frac{1}{\gamma^{1/2}u(\gamma')}\left(\eta_{m,j}\left(\frac{\gamma t} {v(\gamma')} \right) - b\frac{\gamma t}{v(\gamma')}\right)
 		\overset{d}{=} \eta_{m_\gamma,j_\gamma}(t) - b_\gamma t \quad \text{on} \ \bD, \label{}
 	\end{align}
 	where $b_\gamma = -\int_{0}^{\infty}G_{m_\gamma}(x)j_\gamma (dx)$.
 	Then it is enough to show that $\{ m_\gamma \}_\gamma$ and $\{ j_\gamma \}_\gamma$ satisfy the conditions of Theorem \ref{convtoBM2} for $\kappa = 1$ and
 	\begin{align}
 		&\lim_{\gamma \to \infty}\int_{1}^{\infty}G_{m_\gamma} (x)j_\gamma(dx) = 0. \label{eq175}
 	\end{align}
 	Since the conditions (ii) and (iii) of Theorem \ref{convtoBM2} are almost obvious from the assumptions on the function $v$, we omit the proof.
 
 	First we show $m_\gamma \overset{G}{\to} 0 \quad (\gamma \to \infty)$. 
	Since the conditions (i) and (ii) in Definition \ref{convofstring} are obvious from the definition of the function $u$, we only prove
 	\begin{align}
 		\lim_{\gamma \to \infty}\int_{0}^{1}G^{d}_{m_\gamma}(x)dm_\gamma(x) = 0 \label{eq216}
 	\end{align}
 	for some $d$.
	This needs some lengthy computations.
 	Set $N := \max \{k \in \bN \mid k < \alpha \}$ and fix an integer $d \geq d(m) \vee N$.
 	Set
 	\begin{align}
 		H^k(\gamma;x) =\left\{ 
 		\begin{aligned}
 			&-G_m(x) & (k = 1), \\
 			&\int_{0}^{x}dy\int_{y}^{\infty}H^{k-1}(\infty;z)dm(z) & (2 \leq k \leq N), \\
 			&\int_{0}^{x}dy\int_{y}^{\gamma}H^{k-1}(\gamma;z)dm(z) & (k > N)
 		\end{aligned} \label{H-func}
 		\right.
 	\end{align}
 	for $x > 0$ and $\gamma > 0$.
 	We may easily see that $\int_{0}^{1}H^{l}(\gamma;x)dm(x) < \infty$ for $l \leq d(m)$.
 	Note that for $k \leq N$ it holds
 	\begin{align}
 		\int_{1}^{\infty}H^{k}(\infty;x)dm(x) < \infty. \label{eq214}
 	\end{align}
 	Indeed, from induction and Karamata's theorem \cite[Proposition 1.5.8]{Regularvariation} it holds
 	\begin{align}
 		H^{k}(\infty;x) \sim A_k x^{k/\alpha}K(x)^{k}  \quad  (k \leq N) \label{eq215}
 	\end{align}
 	for a constant $A_k > 0$. Thus \eqref{eq214} holds.
 	Note also that $H^k(\gamma;x) \geq 0$ and non-decreasing for $0 < x \leq \gamma$.
 	Since it holds $-G^1_m(x) \leq -G_m(x)$, by changing variables, it follows for $0 \leq x \leq 1$
 	\begin{align}
 		\int_{x}^{1}G^2_{m_\gamma}(y)dm_{\gamma}(y) &\leq \int_{x}^{1}dm_{\gamma}(y)\int_{0}^{y}dz\int_{z}^{1}(-G_{m_\gamma}(w))dm_{\gamma}(w) \label{} \\
 		&\leq \frac{1}{u(\gamma^{\alpha/2})^{3}\gamma^{(3-\alpha)/2}} \int_{\gamma^{\alpha/2} x}^{\gamma^{\alpha/2}}H^2(\gamma^{\alpha/2}; y)dm(y). \label{}
 	\end{align}
 	Similarly, it inductively holds for $k \geq 1$,
 	\begin{align}
 		(-1)^{k}\int_{x}^{1}G^k_{m_\gamma}(y)dm_{\gamma}(y)
 		\leq \frac{1}{u(\gamma^{\alpha/2})^{k+1}\gamma^{(k+1-\alpha)/2}}\int_{\gamma^{\alpha/2} x}^{\gamma^{\alpha/2}}H^k(\gamma^{\alpha/2};y)dm(y) \quad (0 < x \leq 1). \label{eq212}
 	\end{align}
	 Thus we estimate $H^k \ (k \geq 1)$.
	Set $F(\gamma) := \int_{0}^{\gamma}xdm(x)$.
 	We show by induction that for every $k \geq N$ there exists a constant $C_k \geq 0$, and it holds
 	\begin{align}
 		H^{k}(\gamma;x) \leq H^{k}(1;x) + C_k  F(\gamma)^{k-N-1}M(\gamma)x \quad ( \gamma \geq 1,0 < x \leq \gamma) \label{eq210}
 	\end{align}
 	and
 	\begin{align}
 		\left|\int_{1}^{\gamma}H^k(1;x)dm(x)\right| \leq C_k F(\gamma)^{k-N}M(\gamma) \quad ( \gamma \geq 1), \label{eq213}
 	\end{align}
 	where 
	\begin{align}
		M(\gamma) := \int_{1}^{\gamma}H^N(\infty;x)dm(x). \label{eq226}
	\end{align}
 	The case $k=N$ is obvious since $H^N(\gamma;x)$ does not depend on $\gamma$.
 	Assume \eqref{eq210} and \eqref{eq213} hold for $k \geq N$.
 	Then it follows
 	\begin{align}
 		&H^{k+1}(\gamma;x) \label{} \\
 		= &\int_{0}^{x}dy\int_{y}^{\gamma}H^{k}(\gamma;z)dm(z) \label{} \\
 		\leq &\int_{0}^{x}dy\int_{y}^{\gamma}(H^{k}(1;z) + C_k F(\gamma)^{k-N-1}M(\gamma) z)dm(z) \label{} \\
 		\leq &\int_{0}^{x}dy\int_{y}^{1}H^{k}(1;z)dm(z) + x\int_{1}^{\gamma}H^{k}(1;z)dm(z) + C_k F(\gamma)^{k-N}M(\gamma) x. \label{} \\
 		\leq &H^{k+1}(1;x) + 2C_k F(\gamma)^{k-N}M(\gamma)x, \label{}
 	\end{align}
 	and
 	\begin{align}
 		&\left|\int_{1}^{\gamma}H^{k+1}(1;x) dm(x)\right|  \label{} \\
 		= &\left|\int_{1}^{\gamma}dm(x)\int_{0}^{x}dy\int_{y}^{1}H^{k}(1;z) dm(z) \right| \label{} \\
 		\leq &\left| \int_{1}^{\gamma}dm(x)\int_{0}^{1}dy\int_{y}^{1}H^{k}(1;z) dm(z) \right| + \int_{1}^{\gamma}dm(x)\int_{1}^{x}dy\left|\int_{1}^{\gamma}H^{k}(1;z) dm(z) \right| \label{} \\
 		\leq &-m(1)H^{k+1}(1;1) + F(\gamma) \left|\int_{1}^{\gamma}H^k(1;x)dm(x)\right| \label{} \\
 		\leq &-m(1)H^{k+1}(1;1) + C_kF(\gamma)^{k+1-N}M(\gamma). \label{}
 	\end{align}
 	Thus \eqref{eq210} and \eqref{eq213} hold for every $k \geq N$.
 	From \eqref{eq215}, it holds that
 	\begin{align}
 		M(\gamma) = O(\gamma^{(N+1-\alpha)/\alpha}K(\gamma)^{N+1}) \quad (\gamma \to \infty). \label{eq220}
 	\end{align}
 	From \eqref{eq212}, \eqref{eq210} and \eqref{eq213}, we have
 	\begin{align}
 		&u(\gamma^{\alpha/2})^{d+1}\gamma^{(d+1-\alpha)/2}(-1)^{d}\int_{0}^{1}G^d_{m_\gamma}(y)dm_{\gamma}(y) \label{} \\
 		\leq& \int_{0}^{\gamma^{\alpha/2}}H^d(\gamma^{\alpha/2};y)dm(y) \label{} \\
 		\leq& \int_{0}^{\gamma^{\alpha/2}}( H^d(1;y) + C_dF(\gamma^{2/\alpha})^{d-N-1}M(\gamma^{\alpha/2})y  ) dm(y) \label{} \\
 		=& \int_{0}^{1}H^d(1;y)dm(y) +  2C_d F(\gamma^{\alpha/2})^{d-N}M(\gamma^{\alpha/2}) \label{} \\
 		=& O(F(\gamma^{\alpha/2})^{d-N}M(\gamma^{\alpha/2}))  \label{} \\
 		=& O(\gamma^{(d-N)/2}K(\gamma^{\alpha/2})^{d-N}M(\gamma^{\alpha/2})),  \label{eq227}
 	\end{align}
 	where we used that $F(\gamma^{\alpha/2}) = O(\gamma^{1/2}K(\gamma^{\alpha/2})) \quad (\gamma \to \infty)$, which follows from Karamata's theorem.
 	Thus from \eqref{eq220} it follows that
 	\begin{align}
 		 \int_{0}^{1}G^d_{m_\gamma}(y)dm_{\gamma}(y) = O\left(\frac{K(\gamma^{\alpha/2})^{d+1}}{u(\gamma^{\alpha/2})^{d+1}} \right) \xrightarrow{\gamma \to \infty} 0. \label{}
 	\end{align}
 	Thus \eqref{eq216} holds, and we obtain $m_\gamma \overset{G}{\to} 0$.

	Next we check \eqref{eq175}.
	By changing variables and Karamata's theorem, we have
	\begin{align}
		-\int_{1}^{\infty}G_{m_\gamma}(x)j_\gamma(dx)
		&= -\frac{\gamma^{1/2}}{u(\gamma^{\alpha/2})v(\gamma^{\alpha/2})}\int_{\gamma^{\alpha/2}}^{\infty}G_m(x)j(dx) \label{} \\
		&= -\frac{\gamma^{1/2}}{u(\gamma^{\alpha/2})v(\gamma^{\alpha/2})}\left( G_m(\gamma^{\alpha/2})j(\gamma^{\alpha/2}, \infty) + \int_{\gamma^{\alpha/2}}^{\infty}m(x)j(x,\infty)dx\right) \label{} \\
		&\sim 2\alpha\left(\frac{\gamma^{(\alpha-1)/2} m(\gamma^{\alpha/2})}{u(\gamma^{\alpha/2})}\right)\left(\frac{\gamma j(\gamma^{\alpha/2},\infty)}{v(\gamma^{\alpha/2})}\right) \quad (\gamma \to \infty). \label{}
	\end{align}

	We next show the condition (iv) of Theorem \ref{convtoBM2}, that is,
 	\begin{align}
 		\lim_{\gamma \to \infty}\int_{0}^{1}f(x)G^2_{m_\gamma}(x)j_\gamma(dx) = f(0) \label{eq204}
 	\end{align}
 	for every bounded continuous function $f: [0, 1] \to \bR$.
 	By the change of variables, we have
 	\begin{align}
 		&\int_{0}^{1}f(x)G^2_{m_\gamma}(x)j_\gamma(dx) \label{} \\
 		=& \frac{-1}{u(\gamma')^2v(\gamma')}\int_{0}^{\gamma'}f(\gamma'^{-1}x)j(dx) \int_{0}^{x}dy\int_{y}^{\gamma'}G_m(z)dm(z) \label{} \\
 		&+ \frac{m(\gamma')}{u(\gamma')^2v(\gamma')} \int_{0}^{\gamma'}f(\gamma'^{-1}x)j(dx)\int_{0}^{x}dy\int_{y}^{\gamma'}zdm(z) . \label{}
 	\end{align}
 	Since it holds
 	\begin{align}
 		\int_{0}^{\gamma'}j(dx)\int_{0}^{x}dy\int_{y}^{\gamma'}zdm(z) \leq -G_m(\gamma')\int_{0}^{\gamma'}xj(dx), \label{}
 	\end{align}
 	we have from Karamata's theorem 
 	\begin{align}
 		\lim_{\gamma \to \infty}\frac{m(\gamma')}{u(\gamma')^2v(\gamma')} \int_{0}^{\gamma'}f(\gamma'^{-1}x)j(dx)\int_{0}^{x}dy\int_{y}^{\gamma'}zdm(z) = 0. \label{}
 	\end{align}
 	For $\delta \in (0,1)$, we have
 	\begin{align}
 		&\frac{-1}{u(\gamma')^2v(\gamma')}\int_{\delta\gamma'}^{\gamma'}f(\gamma'^{-1}x)j(dx) \int_{0}^{x}dy\int_{y}^{\gamma'}G_m(z)dm(z) \label{} \\
 		\leq & ||f||_{\infty}\frac{G_m(\gamma')^2j(\delta\gamma',\gamma')}{u(\gamma')^2v(\gamma')} \xrightarrow{\gamma \to \infty} 0, \label{}
  	\end{align}
  	where $||f||_\infty = \sup_{x \in[0,1]}|f(x)|$.
 	Thus it follows that
 	\begin{align}
 		\lim_{\gamma \to \infty}\int_{0}^{1}f(x)G^2_{m_\gamma}(x)j_\gamma(dx) 
 		\sim f(0) \frac{N(\gamma')}{u(\gamma')^2 v(\gamma')} \sim f(0) \quad (\gamma \to \infty), \label{}
 	\end{align}
 	and \eqref{eq204} holds.
	Now we have checked all the assumptions of Theorem \ref{convtoBM2}, we obtain the convergence \eqref{eq218}.

 	The asymptotic relation \eqref{eq217} is no more than the application of a Tauberian theorem.
 	Define 
 	\begin{align}
 		\nu(ds) :=  \left(\int_{s}^{\infty}n_{m,j}[T_0 > u]du \right)ds \quad \text{and} \quad 
		\hat{\nu}(\lambda) := \int_{0}^{\infty}\mathrm{e}^{-\lambda s}\nu(ds) = - \frac{\tilde{\chi}(\lambda)}{\lambda^2}, \label{}
 	\end{align}
 	where $\tilde{\chi}(\lambda) = \chi_{m,j}(\lambda) -b\lambda$.
 	Since we have already shown \eqref{eq218}, it holds that
 	\begin{align}
 		\tilde{\chi}(\lambda) \sim -\lambda^2U^{\sharp}(\lambda^{-2})^{-1}v(\lambda^{-\alpha}U^{\sharp}(\lambda^{-2})^{\alpha/2}) \quad (\lambda \to +0 ). \label{}
 	\end{align}
 	From Karamata's Tauberian theorem \cite[Theorem 1.7.1]{Regularvariation}, it follows
 	\begin{align}
 		\nu(0,s] \sim U^{\sharp}(s^2)^{-1}v(s^{\alpha}U^{\sharp}(s^2)^{\alpha/2}) \quad (s \to \infty ). 
 	\end{align}
 	Then applying the monotone density theorem \cite[Theorem 1.7.2]{Regularvariation} twice, we obtain \eqref{eq217}.

 \end{proof}

 To apply the Theorem \ref{convtoBM2}, we need to choose appropriate slowly varying functions $u$ and $v$ satisfying the assumptions.
 The following proposition gives the asymptotic behavior of the function $N$ in \eqref{eq197} in terms of the functions $K$ and $L$,
 from which we may know how we should take $u$ and $v$.
 
 \begin{Prop} \label{comparison of N,K,L alpha > 2}
	Let $m \in \cM$ with $d(m) < \infty$ and $j$ be a Radon measure on $(0,\infty)$.
    Assume $(m,j)$ satisfies $\mathrm{(C)}$.
    For functions $K$ and $L$ which are slowly varying at $\infty$ and $\alpha \geq 2$, suppose the following: 
	\begin{enumerate}
		\item $\mathrm{(M)}_{\alpha,K}$ holds.
		\item $\mathrm{(J)}_{2/\alpha,L}$ holds.
	\end{enumerate}
    Assume for the function $N$ defined by \eqref{eq197} that $N(\infty) = \infty$.
    Then we have
	\begin{align}
	   N(\gamma) \sim \left\{
	   \begin{aligned}
		   &\frac{\alpha}{(\alpha - 1)(\alpha -2)} \int_{1}^{\gamma}\frac{K(x)^2L(x)}{x}dx & (\alpha > 2), \\
		   &\int_{1}^{\gamma} \frac{K(x)^2}{x} dx\int_{1}^{x}\frac{L(y)}{y}dy & (\alpha = 2),
	   \end{aligned}
	   \right.
	   \quad (\gamma \to \infty). \label{eq219}
	\end{align}
\end{Prop}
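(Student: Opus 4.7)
Write $\bar m(x) := m(x,\infty)$, $\bar j(x) := j(x,\infty)$, and $H_1(z) := \int_0^z \bar m(w)\,dw$. The plan is to peel off the nested integrals in $N(\gamma)$ one layer at a time via Fubini and integration by parts, applying Karamata's theorem at each step. By Karamata applied to $\mathrm{(M)}_{\alpha,K}$ (exponent $1/\alpha - 1 > -1$),
\begin{align*}
H_1(z) \sim \frac{\alpha}{\alpha-1}\, z^{1/\alpha} K(z) \quad (z \to \infty).
\end{align*}
Using $dm = -d\bar m$ and integration by parts, one obtains
\begin{align*}
\Phi(y) := \int_y^\gamma H_1(z)\,dm(z) = H_1(y)\bar m(y) - H_1(\gamma)\bar m(\gamma) + \int_y^\gamma \bar m(w)^2\,dw,
\end{align*}
and a further Fubini on the outer layers gives
\begin{align*}
N(\gamma) = \int_0^\gamma (\bar j(y) - \bar j(\gamma))\,\Phi(y)\,dy.
\end{align*}

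For $\alpha > 2$, the function $\bar m^2$ is integrable at infinity, so one can define $\Phi_\infty(y) := H_1(y)\bar m(y) + \int_y^\infty \bar m(w)^2\,dw$. Two applications of Karamata then give
\begin{align*}
\Phi_\infty(y) \sim \frac{\alpha}{(\alpha-1)(\alpha-2)}\, y^{2/\alpha - 1} K(y)^2,
\end{align*}
the coefficient being the sum $\frac{\alpha}{(\alpha-1)^2} + \frac{\alpha}{(\alpha-1)^2(\alpha-2)}$ from the boundary term and the Karamata estimate of $\int_y^\infty \bar m^2$ respectively. Multiplying by $\bar j(y) \sim y^{-2/\alpha} L(y)$ and integrating in $y$ yields the claimed leading term $\tfrac{\alpha}{(\alpha-1)(\alpha-2)}\int_1^\gamma K(y)^2 L(y)/y\,dy$. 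The remaining pieces, namely $\bar j(\gamma)\int_0^\gamma \Phi\,dy$ and the shift from replacing $\Phi$ by $\Phi_\infty$, are each $O(K(\gamma)^2 L(\gamma))$, hence negligible compared with the diverging slowly varying integral, because $\ell(\gamma) = o(\int_1^\gamma \ell(y)/y\,dy)$ for any non-integrable slowly varying $\ell$.

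For $\alpha = 2$, the integral $\int^\infty \bar m^2$ diverges and $\Phi_\infty$ is unavailable. Instead, $\Phi(y)$ is dominated by $\int_y^\gamma \bar m(w)^2\,dw \sim \int_y^\gamma K(w)^2/w\,dw$, which grows with $\gamma$ and beats the boundary pieces $H_1(y)\bar m(y) \sim 2K(y)^2$ in the regime $\gamma/y \to \infty$. Swapping the order of integration,
\begin{align*}
N(\gamma) \sim \int_0^\gamma \bar m(w)^2 \Bigl(\int_0^w \bar j(y)\,dy\Bigr)dw \sim \int_1^\gamma \frac{K(w)^2}{w} \left(\int_1^w \frac{L(z)}{z}\,dz\right) dw,
\end{align*}
by Karamata applied to each nested factor.

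The main obstacle is the $\alpha = 2$ case, where the natural ``limit'' $\Phi_\infty$ does not exist and the leading asymptotic genuinely depends on $\gamma$. Making the above precise requires uniform control of $\Phi(y)$ across $y \in [0,\gamma]$: the boundary piece $H_1(\gamma)\bar m(\gamma)$ and the contribution from $y$ close to $\gamma$ must each be shown of lower order than the iterated slowly varying integral, using that the latter diverges (which is exactly the hypothesis $N(\infty) = \infty$). A second delicate point, in both cases, is that the asymptotics of $\bar m$ and $\bar j$ are valid only at infinity; this is absorbed using condition (C) (in particular $\int_0^1 \bar j(y)\,dy < \infty$) together with $N(\infty) = \infty$ to ensure the small-$y$ and small-$w$ parts of the integrals are of lower order.
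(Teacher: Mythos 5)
Your argument matches the paper's essentially step for step: both decompose $N(\gamma)$ via Fubini and integration by parts into a main term plus boundary corrections (your $\bar j(\gamma)\int_0^\gamma\Phi$ and the $\Phi-\Phi_\infty$ shift correspond to the paper's $I_3,I_4$), apply Karamata's theorem at each layer, and split $\alpha>2$ from $\alpha=2$ according to whether $\int^\infty\bar m(w)^2\,dw$ converges. The only point worth flagging is that your $\alpha=2$ heuristic, that $\int_y^\gamma\bar m^2$ dominates $H_1(y)\bar m(y)$, is made precise in the paper by \cite[Proposition 1.5.9a]{Regularvariation}, which yields $I_2=o(I_1)$ directly.
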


\begin{proof}
	We may assume $m(\infty) = 0$ without loss of generality.
	By integration by parts and Fubini's theorem, we have
	\begin{align}
		&N(\gamma) \label{} \\
	   =& -\int_{0}^{\gamma}j(x,\gamma)dy\int_{y}^{\gamma}G_m(z)dm(z) \label{} \\ 
	   =& -\int_{0}^{\gamma}j(x,\gamma)\left( m(\gamma)G_m(\gamma) - m(x)G_m(x) - \int_{x}^{\gamma}m(y)^2dy \right)dx \label{} \\
	   =& -\int_{0}^{\gamma}j(x,\infty)\left( m(\gamma)G_m(\gamma) - m(x)G_m(x) - \int_{x}^{\gamma}m(y)^2dy \right)dx \label{} \\
	   &+ j(\gamma,\infty)\left( \gamma m(\gamma)G_m(\gamma) -(1/2)G_m(\gamma)^2 - \int_{0}^{\gamma}xm(x)^2dx \right) \label{} \\
	   &= I_1(\gamma) + I_2(\gamma) + I_3(\gamma) + I_4(\gamma), \label{}
	\end{align}
	where
	\begin{align}
	   I_1(\gamma) &= \int_{0}^{\gamma}m(x)^2dx \int_{0}^{x}j(y,\infty)dy, \quad 
	   I_2(\gamma) = \int_{0}^{\gamma} m(x) G_m(x) j(x,\infty) dx, \label{} \\
	   I_3(\gamma) &= - m(\gamma)G_m(\gamma) \int_{0}^{\gamma}j(x,\infty)dx, \label{} \\
	   I_4(\gamma) &= j(\gamma,\infty)\left( \gamma m(\gamma)G_m(\gamma) -(1/2)G_m(\gamma)^2 - \int_{0}^{\gamma}xm(x)^2dx \right). \label{}
   \end{align}
   From Karamata's theorem and \cite[Proposition 1.5.9a]{Regularvariation}, we can see $I_3(\gamma) = o(I_1(\gamma)) \quad (\gamma \to \infty)$ and
   \begin{align}
	   I_4(\gamma) = \left\{ 
	   \begin{aligned}
		   &O(I_3(\gamma)) & (\alpha > 2), \\
		   &o(I_3(\gamma)) & (\alpha = 2),
	   \end{aligned}
	   \right. \quad (\gamma \to \infty). \label{}
   \end{align}
   Thus it follows that 
   \begin{align}
	   N(\gamma) \sim I_1(\gamma) + I_2(\gamma) \quad (\gamma \to \infty). \label{}
   \end{align}

   First, we consider the case of $\alpha > 2$.
   In this case, again from Karamata's theorem, it holds
   \begin{align}
	   m(x)^2\int_{0}^{x}j(y,\infty)dy \sim \frac{\alpha}{\alpha - 2}xm(x)^2j(x,\infty) \sim \frac{\alpha}{(\alpha -2)(\alpha - 1)^2} \frac{K(x)^2L(x)}{x} \quad (x \to \infty). \label{}
   \end{align}
   Similarly, it holds
   \begin{align}
	   m(x)G_m(x)j(x,\infty)
	   &\sim  \frac{\alpha}{(\alpha - 1)^2}\frac{K(x)^2L(x)}{x} \quad ( x \to \infty). \label{}
   \end{align}
   Since we are assuming $N(\infty) = \infty$, we obtain \eqref{eq219} for $\alpha > 2$.

   Next we consider the case $\alpha = 2$.
   Again from \cite[Proposition 1.5.9a]{Regularvariation}, we have
   \begin{align}
	   \lim_{x \to \infty}\frac{m(x)G_m(x)j(x,\infty)}{m(x)^2\int_{0}^{x}j(y,\infty)dy} = 0. \label{}
   \end{align} 
   Thus it follows $I_2(\gamma) = o(I_1(\gamma)) \quad (\gamma \to \infty)$,
   and we obtain \eqref{eq219} for $\alpha = 2$.
\end{proof}

\section{Examples} \label{section:examle}

Let us consider a unilateral jumping-in diffusion $X$ whose local generator $L$ on $(0,\infty)$ is
\begin{align}
	L = \frac{1}{2}\left( \frac{d^2}{dx^2} + b(x)\frac{d}{dx} \right). \label{eq199}
\end{align}
We assume $b(x)$ can be represented as 
\begin{align}
	b(x) = \frac{\delta - 1 + \epsilon(x)}{x} + \eta(x) \quad (x > 0) \label{eq200}
\end{align}
for some $\delta \in \bR$ and some measurable functions $\epsilon$ and $\eta$ satisfying
\begin{align}
	\lim_{x \to \infty}\epsilon(x) = 0, \quad \lim_{x \to \infty}\int_{1}^{x}\eta(y)dy = \bar{\eta} \in \bR. \label{}
\end{align}
When $b(x) = (\delta - 1)/x$, that is, when $\epsilon(x) = \eta(x) = 0 \ (x > 0)$, the local generator $L$ is equal to that of Bessel process of dimension $\delta$. Thus, we call jumping-in diffusions with a drift of the form \eqref{eq200} {\it jumping-in diffusions with a Bessel-like drift}. In Kasahara and Kotani \cite{KasaharaKotani:bessel-like}, they have studied a one-dimensional diffusion (without jumps) with the same form of drift and have shown the relation between the asymptotic behavior of the drift coefficient and that of the speed measure.

Define for an arbitrary fixed $x_0 > 0$,
\begin{align}
	W(x) := \exp \left(\int_{x_0}^{x}b(y)dy\right) = x^{\delta - 1}c(x)\exp \left( \int_{x_0}^{x}\frac{\epsilon(y)}{y}dy\right) \label{eq206}
\end{align}
with $c(x) = \exp\int_{x_0}^{x}\eta(y)dy$.
Let us assume $\delta < 0$.
For the generator $L$ in \eqref{eq199}, we define a corresponding string $\tilde{m}$ and its scale function $\tilde{s}$ by
\begin{align}
	\tilde{m}(x) = 2\int_{1}^{x}W(y)dy, \quad \tilde{s}(x) = \int_{0}^{x}\frac{dy}{W(y)}, \label{}
\end{align}
and then it follows that
\begin{align}
	A = \frac{d}{d\tilde{m}}\frac{d}{d\tilde{s}}. \label{}
\end{align}
Under the natural scale, the speed measure $m$ is given by $m(x) := \tilde{m}(\tilde{s}^{-1}(x))$.

To apply our main results, we need to check when the tail of $m$ varies regularly at $\infty$.
Since the function $W$ of the form \eqref{eq206} varies regularly at $\infty$ with exponent $\delta - 1$ (see e.g., \cite[Theorem 1.3.1]{Regularvariation}),
we set
\begin{align}
	\ell(x) := c(x)\exp \left( \int_{x_0}^{x}\frac{\epsilon(y)}{y}dy \right). \label{}
\end{align}
Then $W(x) = x^{\delta -1}\ell(x)$ and $\ell$ is a slowly varying function at $\infty$.
Note that since it holds that $\tilde{m}'(x) = 2W(x)$ and $\tilde{s}'(x) = 1/W(x)$, we have
\begin{align}
	m'(x) = \tilde{m}'(\tilde{s}^{-1}(x))(\tilde{s}^{-1}(x))' = 2W(\tilde{s}^{-1}(x))^2. \label{}
\end{align}
From Karamata's theorem, the function $\tilde{s}(x)$ varies regularly at $\infty$ with exponent $2 - \delta$ and therefore $\tilde{s}^{-1}(x)$ varies regularly at $\infty$ with exponent $1 / (2 - \delta)$ (see e.g., \cite[Proposition 1.5.15]{Regularvariation}).
Then from \cite[Proposition 1.5.14]{Regularvariation} and some calculation, we have
\begin{align}
	m(x,\infty) \sim (\alpha - 1)^{-1}x^{1/\alpha - 1}K(x) \quad (x\to \infty) \label{}
\end{align}
for $\alpha := 1 - 2/\delta$ and a slowly varying function $K$ at $\infty$ given by $K(x) := 2^{1/\alpha}  \alpha^{1/\alpha - 1} n^{\sharp} (x)^{1/\alpha}$.
Here $n^{\sharp}$ is a de Bruijn conjugate of $n(x) := \ell(x^{1/(2\alpha)})^{-1}$.
Hence $m$ satisfies $\mathrm{(M)}_{\alpha,K}$.

To apply Theorems \ref{informal-main-alpha2+} and \ref{informal-main-alpha-int}, we need to take appropriate slowly varying functions $u$ and $v$ satisfying \eqref{eq174}.
To obtain the asymptotic behavior of $K$, we consider a specific example.
Let $s \in \bR$. Taking $\epsilon(x) = (s-1) / \log x$ and $\eta(x) = 0$, it holds
\begin{align}
\ell(x) = \left(\frac{\log x}{\log x_0}\right)^{s-1} \quad (x > 0) \label{eq207}.
\end{align}
From \cite[p.433]{Regularvariation}, it holds that $\ell^{\sharp}(x) \sim \ell(x)^{-1} \ (x \to \infty)$ and, thus it follows $n^{\sharp}(x) \sim (2\alpha)^{-s+1}\ell(x) \ (x \to \infty)$.
Thus, by taking $x_0$ appropriately, it holds 
\begin{align}
	K(x) = (\log x)^{(s-1) / \alpha}. \label{eq222}
\end{align}
Let
\begin{align}
	j(dx) := \frac{\alpha}{2} \left( \frac{1}{x^{a + 1}} \wedge \frac{(\log x)^{t-1}}{x^{2/\alpha + 1}}\right)dx \label{}
\end{align}
for $a \in (0,1/\alpha)$ and $t \in \bR$.
It is not difficult to see $(m,j)$ satisfies $\mathrm{(C)}$ and $\mathrm{(J)}_{2/\alpha,L}$ for 
\begin{align}
	L(x) = (\log x)^{t-1}. \label{eq223}
\end{align}
Since we would like to apply Proposition \ref{comparison of N,K,L alpha > 2}, we take $s$ and $t$ so that $N(\infty) = \infty$. For example, here we take
\begin{align}
	\left\{
	\begin{aligned}
		&2(s-1) / \alpha + t \geq 0 & (\alpha > 2), \\
		&t > 0 \quad \text{and} \quad s+t > 0 & (\alpha = 2).
	\end{aligned}
	\right. \label{}
\end{align}
Then from Proposition \ref{comparison of N,K,L alpha > 2}, it holds
for $\alpha > 2$ that
\begin{align}
	N(\gamma) \sim  C_{\alpha}(\log \gamma)^{\frac{2(s-1)}{\alpha} + t}
	\quad (\gamma \to \infty), \label{}
\end{align}
where $C_{\alpha} := \frac{\alpha}{(\alpha -1)(\alpha -2)}\left( \frac{2(s - 1)}{\alpha} + t \right)$,
and for $\alpha = 2$ it holds that
\begin{align}
	N(\gamma) \sim C_2 
	(\log \gamma)^{s+t} \quad (\gamma \to \infty), \label{}
\end{align}
where $C_2 := \frac{1}{t(s+t)}$.

Thus in the case of $\alpha > 2$, when we define
\begin{align}
	u(\gamma) := \sqrt{c_1}(\log \gamma)^{((s-1)/\alpha) + \eps/2} \quad
	\text{and} \quad
	v(\gamma) := c_2(\log \gamma)^{t-1 + \eps} \label{}
\end{align}
for $\eps \in (0,1)$ and constants $c_1,c_2 > 0$ such that $c_1c_2 = C_{\alpha}$, we may see all the assumptions of Theorem \ref{convtoBM2} holds and therefore the convergence \eqref{eq218} holds.

For $\alpha = 2$, we may take
\begin{align}
	u(\gamma) := \sqrt{c_1} (\log \gamma)^{s/2} \quad \text{and} \quad 
	v(\gamma) := c_2(\log \gamma)^t \quad (\gamma \geq 1) \label{}
\end{align}
for constants $c_1,c_2 > 0$ such that $c_1c_2 = C_2$. 

\appendix

\section{Appendix: Proof of Theorem \ref{informal-main-alpha-int}} \label{appendix}

We give a proof of Theorem \ref{informal-main-alpha-int}.
Since it is similar to that of Theorem \ref{informal-main-alpha2+}, we focus on the different parts.


\begin{proof}[Proof of Theorem \ref{informal-main-alpha-int}]
	We may assume $m(\infty) = 0$ without loss of generality.
	Define $m_\gamma$ and $j_\gamma$ as \eqref{eq225}.
	We show $\{ m_\gamma \}_\gamma$ and $\{ j_\gamma \}_\gamma$ satisfy the conditions of Theorem \ref{convtoBM2} for $\kappa = 1$.

    The equality $\lim_{\gamma \to \infty}\int_{1}^{\infty}G_{m_\gamma} (x)j_\gamma(dx) = 0$ and the conditions (ii) and (iv) of Theorem \ref{convtoBM2} follow from the same arguments in the proof of Theorem \ref{informal-main-alpha2+}, combining the assumption (v) when $\alpha = 2$.
	In addition, the condition (iii) of Theorem \ref{convtoBM2} is obvious.
	Thus, to apply Theorem \ref{convtoBM2}, it is enough to check $m_\gamma \overset{G}{\to} 0 \quad (\gamma \to \infty)$. 
    We only show
	\begin{align}
		\lim_{\gamma \to \infty}\int_{0}^{1}G^{d}_{m_\gamma}(x)dm_\gamma(x) = 0 \label{eq216d}
	\end{align}
	for some $d$.
	Set $N := \alpha - 1$ and fix an integer $d \geq d(m) \vee N$ so that
    \begin{align}
		\lim_{\gamma \to \infty}\frac{K(\gamma)^{d - N}
		\int_{1}^{\gamma}\frac{K(x)^{\alpha}}{x}dx}{u(\gamma)^{d+1}}
		= 0. \label{}
    \end{align} 
	Define the function $H^k \ (k \geq 1)$ as in \eqref{H-func}.
	Then for $k \geq N$, by the same argument, we have \eqref{eq210} and \eqref{eq213} for a constant $C_k > 0$ and the function $M$ in \eqref{eq226}.
	From \eqref{eq215} and \cite[Proposition 1.5.9]{Regularvariation}, 
	it holds for an integer $\alpha$
	\begin{align}
		M(\gamma) = O\left(\int_{1}^{\gamma}\frac{K(x)^{\alpha}}{x}dx\right) \quad ( \gamma \to \infty). \label{eq221d}
	\end{align}
	Then we can show \eqref{eq227} in the same way, that is,
	\begin{align}
		\int_{0}^{1}G^d_{m_\gamma}(y)dm_{\gamma}(y) 
		\leq O\left(\frac{K(\gamma^{\alpha/2})^{d-N}M(\gamma^{\alpha/2})}{u(\gamma^{\alpha/2})^{d+1}}\right). \label{}
	\end{align}
	From the assumption (iv), we obtain \eqref{eq216d}, and it follows $m_\gamma \overset{G}{\to} 0$.
	The asymptotic behavior \eqref{eq217d} follows from the exactly same way in the proof of Theorem \ref{informal-main-alpha2+}.
\end{proof}


\bibliography{Bibliography.bib} 
\bibliographystyle{plain}

\end{document}